\renewcommand*{\backref}[1]{}
\renewcommand*{\backrefalt}[4]{
 \ifcase #1
 [No citations.]
 \or [#2]
 \else [#2]
 \fi }
\renewcommand{\qed}{$\hfill\square$}
\numberwithin{equation}{section}
\newtheorem{theorem}[equation]{Theorem}
\newtheorem{cor}[equation]{Corollary}
\newtheorem{lem}[equation]{Lemma}
\newtheorem{proposition}[equation]{Proposition}
\newtheorem{prop}[equation]{Proposition}
\newtheorem{definition}[equation]{Definition}
\newtheoremstyle{named}{}{}{\itshape}{}{\bfseries}{.}{.5em}{\thmnote{#3} #1}
\theoremstyle{named}
\newcommand{\refsec}[1]{Section~\ref{Sec:#1}}
\newcommand{\refdef}[1]{Definition~\ref{Def:#1}}
\newcommand{\reffig}[1]{Figure~\ref{Fig:#1}}
\newcommand{\refeqn}[1]{\eqref{Eqn:#1}}
\newcommand{\reflem}[1]{Lemma~\ref{Lem:#1}}
\newcommand{\refprop}[1]{Proposition~\ref{Prop:#1}}
\newcommand{\refthm}[1]{Theorem~\ref{Thm:#1}}
\let\Im\relax
\DeclareMathOperator{\Im}{Im}
\newcommand{\R}{\mathbb{R}}
\newcommand{\C}{\mathbb{C}}
\newcommand{\Z}{\mathbb{Z}}
\newcommand{\To}{\longrightarrow}
\title{An algebraic approach to circle packing}
\author{Daniel V. Mathews}
\address{School of Mathematics, Monash University, VIC 3800, Australia}
\email{{\rm \textcolor{blue}{Daniel.Mathews@monash.edu}}}
\author{Orion Zymaris}
\address{School of Mathematics, Monash University, VIC 3800, Australia}
\email{{\rm \textcolor{blue}{Orion.Zymaris@monash.edu}}}
\begin{document}

\begin{abstract}
We show that for certain triangulations of surfaces, circle packings realising the triangulation can be found by solving a system of polynomial equations. We also present a similar system of equations for unbranched circle packings. The variables in these equations are associated to corners of triangles in the complex, with equations for interior vertices, edges, faces, and generators of first homology. The vertex equations are generalisations of the Descartes circle theorem, of higher degree but more symmetric than those previously found by the authors. We also provide some connections between the spinorial approach of previous work of the authors, and classical Euclidean geometry.
\end{abstract}

\maketitle

\setcounter{tocdepth}{1}
\tableofcontents

\section{Introduction}

\subsection{Overview}

In this paper we present an approach to certain circle packing problems based on solving systems of polynomial equations. For certain $\Delta$-complexes $K$ providing 2-dimensional triangulations of surfaces, we introduce a system of variables and polynomial equations, which we call the \emph{circle packing equations} for $K$. We show that the positive real solutions of this system of equations correspond to certain circle packings realising $K$. The variables correspond to the corners of triangles in $K$. Roughly, the circle packing equations include a \emph{triangle equation} for each triangle of $K$, an \emph{edge equation} for each interior edge of $K$, and a \emph{vertex equation} for each interior vertex of $K$. When $K$ is a torus, there are also two \emph{holonomy equations}.

\begin{theorem}
\label{Thm:circle_packing_eqs_general}
Let $K$ be a $\Delta$-complex triangulating an oriented 2-dimensional surface (possibly with boundary), such that either
\begin{enumerate}
\item
$K$ is a simply connected simplicial complex, or
\item
$K$ triangulates a torus, with universal cover a simplicial complex triangulating $\R^2$.
\end{enumerate}
Then the positive real solutions of a set of circle packing equations for $K$ correspond bijectively to conformal classes of Euclidean or good spherical circle packings realising $K$.
\end{theorem}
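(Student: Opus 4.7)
The plan is to establish the bijection by interpreting the corner variables as spinorial data attached to each (circle, triangle) incidence, and then realizing the packing by a local-to-global construction on the universal cover. I would first unpack what each equation says geometrically: the triangle equation encodes that a triple of corner variables at a triangle arises from an actual Euclidean or spherical triangle with three mutually tangent circles at its vertices; the edge equation is a compatibility condition saying that the two triangles meeting at an interior edge agree on the radius and placement of the shared circles at the edge's endpoints; the vertex equation expresses that the cyclic chain of circles around an interior vertex closes up correctly (the promised generalized Descartes relation); and the two holonomy equations in the torus case say that monodromy around the two generators of $\pi_1$ is trivial in the conformal group.

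For the direction from packings to solutions I would start with a circle packing realizing $K$, extract the corner data using the spinorial formulas from the authors' previous work, and verify each equation directly. This should largely be bookkeeping: the triangle and edge equations hold by the definitions of the spinors, the vertex equation holds because the circles literally close up once around the vertex, and in the torus case the holonomy equations hold because the packing descends to the quotient. Conformally equivalent packings should give the same solution (or solutions related by a trivial gauge), establishing that the map is well-defined on conformal classes and injective.

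For the reverse direction I would pass to the universal cover $\tilde{K}$, which is a simply connected simplicial complex in both cases. Fix a base triangle in $\tilde{K}$, and use the triangle equation to realize it as a concrete Euclidean or spherical triangle with three mutually tangent circles. Then, marching along a spanning tree of the dual graph of $\tilde{K}$, the edge equations let me extend the packing one triangle at a time, with each new triangle uniquely determined by the shared edge. To show this construction is independent of the path taken, I would argue that any closed loop in the dual graph is a product of small loops around interior vertices (and, in the torus case, the two generating loops of $\pi_1$), and that the vertex equations force consistency around each vertex while the holonomy equations force consistency around each generator. Finally, descending from $\tilde{K}$ to $K$ in the torus case requires that the deck transformations act by conformal (similarity or Möbius) maps; this is exactly what the holonomy equations encode, and one obtains a circle packing on $K$ well-defined up to conformal equivalence.

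The main obstacle will be the vertex equation: unlike the triangle and edge equations, which are essentially local, it constrains an entire cyclic chain of corners, and the proof that it is both necessary and sufficient for the circles to close up at the vertex is where the higher-degree symmetric Descartes-type identity does real work. A secondary difficulty is treating the Euclidean and good spherical cases uniformly, since the relevant conformal groups and the precise form of the spinorial data differ slightly, and one must carefully match conformal equivalence on the packing side with the natural scaling/gauge symmetry on the variable side so that the correspondence is genuinely bijective on conformal classes.
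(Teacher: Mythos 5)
Your overall architecture is sound, and your reverse direction takes a genuinely different route from the paper: you develop the packing along a spanning tree of the dual graph of $\widetilde{K}$ and then kill the monodromy of the resulting developing map, whereas the paper (\refsec{closed_disc}) argues by induction on the number of triangles, either splitting $K$ along an interior edge with both endpoints on the boundary or peeling off a triangle (\reflem{build_triangulation}), then handles infinite discs by exhaustion, spheres by deleting $\Delta_0$ and stereographically projecting, and tori by cutting to a fundamental domain. Your monodromy argument can be made to work for the simply connected cases, but note one imprecision: triviality of the monodromy around an interior vertex needs the edge equation at the non-tree dual edge that closes the loop (to match the radius of the circle at the far endpoint of that edge, hence the side length), \emph{in addition to} the vertex equation, which only controls the total rotation.

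The genuine gap is in the torus case. The monodromy of your developing map along a generator of $\pi_1$ is an orientation-preserving similarity $z \mapsto az+b$ with $a \in \C^*$, and the holonomy equations only assert that $\prod_j (m_\bullet + i\delta_\bullet)$ is real, i.e.\ that the net rotation $\Theta(\lambda)$, $\Theta(\mu)$ vanishes mod $2\pi$; they say nothing about $|a|$. So after imposing all the equations you have a priori only killed the rotational part of the holonomy, and the quotient could still be a similarity (affine) torus rather than a Euclidean one, which carries no honest circle packing. Your statement that the holonomy equations ``encode that deck transformations act by conformal maps'' is vacuous --- they do so by construction --- and does not address this. The paper closes the gap with a separate geometric argument: the chains of edge equations along the two boundary curves $\mathfrak{l}, \mathfrak{m}$ of a fundamental domain show that opposite sides of $\Phi_0(K_0)$ are parallel and related by dilation factors $A_{\mathfrak{l}}, A_{\mathfrak{m}}$, and the quadrilateral on the four corner vertices, having opposite sides parallel, is a parallelogram, forcing $A_{\mathfrak{l}} = A_{\mathfrak{m}} = 1$. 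Some such argument must be added to your proof. A secondary omission: in the spherical case you must actually perform the reduction to the disc complex $K_0 = K \setminus \Delta_0$, accounting for the sphere-closing constants $-\sqrt{3}$, the reversed orientation under stereographic projection, and the goodness condition for $\Delta_0$, since these enter the statement of the bijection itself; ``treating the cases uniformly'' does not suffice.
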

We define the requisite notions
as we proceed. 
The hypotheses on $K$ only allow a few topological possibilities. If $K$ is finite, then $K$ triangulates a closed disc $D^2$, sphere $S^2$ or torus $T^2$. 
If $K$ is infinite, it triangulates the plane (equivalently, an open disc), or a closed disc with some points and arcs removed from its boundary, which we call a \emph{semi-open disc}.

Thus, the set of (conformal classes of Euclidean or spherical) circle packings realising $K$ has the structure of a real \emph{semialgebraic set}, given by the circle packing equations and the inequalities that each variable be positive. 
When $K$ is infinite, there are infinitely many variables and infinitely many equations.

When $K$ is an infinite triangulation of a plane surface, the circle packing equations will only find Euclidean circle packings; our equations will not find hyperbolic packings. \refthm{circle_packing_eqs_general} is a clean and convenient statement, but our arguments prove something slightly stronger: some circle packings with orientation reversals correspond to some solutions of the circle packing equations with some variables negative. We explain this slight generalisation as we proceed (see comments at end of \refsec{disc_case}, \refsec{spherical_case_eqns}, \refsec{closed_disc} and \refsec{spheres}).

When $K$ is a plane surface, the circle packing equations are unique, and contain no redundancy. However, for spheres and tori, the circle packing equations depend on choices of additional structures (see \refdef{circle_packing_eqns_sphere} and \refdef{circle_packing_eqns_torus}), so are not unique. Moreover, in these cases, there is some redundancy in the equations, in a sense we can make precise (see \refsec{redundancy_rigidity}). However, any set of circle packing equations has solution set corresponding to circle packings. 

 Finding circle packings via these equations is somewhat analogous to Thurston's approach to finding hyperbolic structures on triangulated 3-manifolds, using gluing and completeness equations \cite{Thurston_notes, Thurston97}. Circle packing theory is known to be closely related to hyperbolic 3-manifolds, at least since Thurston's proof of the Koebe--Andreev--Thurston theorem using hyperbolic geometry in \cite[ch. 13]{Thurston_notes}. The vertex and triangle equations essentially impose Euclidean angle conditions, and edge equations then match lengths. The result is similar to how Thurston gluing equations impose Euclidean conditions on the triangles of a cusp triangulation. Our holonomy equations play a similar role to Thurston's completeness equations.

Our approach to circle packing in this paper can also be regarded as a development of the spinorial approach taken in \cite{MZ25}. Although the results of this paper do not require spinors, they depend crucially on the variables used in that work, there denoted $m_j$, which arose from spinors. (We also give alternative proofs of some of our results here using spinors.) There is a sense, which we will explain, in which our circle packing equations can be regarded allegorically as spinorial ``square roots of geometry". We also provide some connections between this spinorial approach and some classic Euclidean triangle geometry. This connection relies crucially on an observation of Ian Agol \cite{Agol_email}, that the variables $m_j$ introduced in \cite{MZ25} have a simple trigonometric interpretation.

Along the way, we give a new generalisation of the Descartes circle theorem, i.e. an equation relating the curvatures of circles in a flower in the Euclidean plane. This equation has higher degree in the $m_j$ than the equation given in \cite{MZ25} but is more symmetric. The vertex equations in our circle packing equations are precisely these generalised Descartes equations. We also discuss the relationship of these Descartes equations to those of \cite{MZ25}, addressing a minor issue in that paper (not affecting the result) in the process. Recall an \emph{$n$-flower} consists of a central circle $C_\infty$, and $n$ \emph{petal} circles $C_j$, over $j \in \Z/n\Z$, so that the $C_j$ are externally tangent to $C_\infty$ in order around $C_\infty$, and each $C_j$ is externally tangent to $C_{j-1}$ and $C_{j+1}$.

\begin{theorem}[Symmetric generalised Descartes circle theorem]
\label{Thm:gen_Descartes}
Let the circles $C_\infty$ and $C_j$ ($j \in \Z/n\Z$) of an $n$-flower in the Euclidean plane have curvatures $\kappa_\infty, \kappa_j$ respectively. Define $m_j$ for $j \in \Z/n\Z$ as
\begin{equation}
\label{Eqn:mj_def}
m_j = \sqrt{ \left( \frac{\kappa_j}{\kappa_\infty} + 1 \right) \left( \frac{\kappa_{j-1}}{\kappa_\infty} + 1 \right) - 1}.
\end{equation}
Then 
\begin{equation}
\label{Eqn:symmetric_Descartes}
\prod_{j=1}^n \left( m_j + i \right) = \prod_{j=1}^n \left( m_j - i \right).
\end{equation}
\end{theorem}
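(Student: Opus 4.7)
The plan is to interpret $m_j$ geometrically and then reduce the product identity to the closing condition for the flower. For each $j$, consider the triangle $T_j$ whose vertices are the centers of $C_\infty$, $C_{j-1}$, $C_j$. Since the circles are pairwise externally tangent, the side lengths of $T_j$ are $r_\infty+r_{j-1}$, $r_\infty+r_j$, and $r_{j-1}+r_j$, where $r_\bullet=1/\kappa_\bullet$. Let $\alpha_j\in(0,\pi)$ denote the angle of $T_j$ at the center of $C_\infty$. The flower closure condition is precisely $\sum_j \alpha_j = 2\pi$.

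The key computation is to identify $m_j$ with a trigonometric function of $\alpha_j$ (this is the Agol observation alluded to in the introduction). Applying the law of cosines in $T_j$ and the half-angle identity
\[
\cot^2(\alpha_j/2)=\frac{1+\cos\alpha_j}{1-\cos\alpha_j},
\]
a routine manipulation yields
\[
\cot^2(\alpha_j/2)=\frac{r_\infty(r_\infty+r_{j-1}+r_j)}{r_{j-1}r_j}=\frac{\kappa_{j-1}\kappa_j}{\kappa_\infty^2}+\frac{\kappa_{j-1}}{\kappa_\infty}+\frac{\kappa_j}{\kappa_\infty}.
\]
Expanding the product defining $m_j^2$ in \refeqn{mj_def} gives exactly the same expression, so $m_j^2=\cot^2(\alpha_j/2)$. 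Since $\alpha_j\in(0,\pi)$ gives $\cot(\alpha_j/2)>0$ and $m_j>0$ by convention (positive square root), we conclude $m_j=\cot(\alpha_j/2)$.

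Now the product identity collapses. Using
\[
\cot(\alpha_j/2)+i=\frac{\cos(\alpha_j/2)+i\sin(\alpha_j/2)}{\sin(\alpha_j/2)}=\frac{e^{i\alpha_j/2}}{\sin(\alpha_j/2)},
\]
and similarly $\cot(\alpha_j/2)-i=e^{-i\alpha_j/2}/\sin(\alpha_j/2)$, we obtain
\[
\prod_{j=1}^n (m_j+i)=\frac{e^{i(\sum_j \alpha_j)/2}}{\prod_j \sin(\alpha_j/2)},\qquad \prod_{j=1}^n (m_j-i)=\frac{e^{-i(\sum_j \alpha_j)/2}}{\prod_j \sin(\alpha_j/2)}.
\]
Since $\sum_j \alpha_j=2\pi$, both exponentials equal $e^{\pm i\pi}=-1$, establishing \refeqn{symmetric_Descartes}.

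The only nontrivial step is the identification $m_j=\cot(\alpha_j/2)$; this is a direct algebraic calculation, but I would present it cleanly by showing the half-angle formula reduces to the quantity inside the radical in \refeqn{mj_def} after clearing denominators by $\kappa_\infty^2/(\kappa_{j-1}\kappa_j)$. Once that identification is in hand, the remainder of the argument is transparent, and the appearance of $2\pi$ (rather than some other multiple of $\pi$) ensures the equality of the two products rather than merely their equality up to sign.
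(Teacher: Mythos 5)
Your proof is correct and takes essentially the same route as the paper: you derive the Agol identification $m_j=\cot(\alpha_j/2)$ from the cosine rule exactly as in the paper's preliminary lemmas, and then reduce the identity to the angle-sum condition via $m_j\pm i = e^{\pm i\alpha_j/2}/\sin(\alpha_j/2)$, which is the paper's Proof \#1 in a mildly different guise. The one small caveat is that the paper allows the flower to be branched, so the closing condition is $\sum_j\alpha_j=2\pi(\beta+1)$ for some integer $\beta\ge 0$ rather than exactly $2\pi$; your argument goes through verbatim since $e^{i\pi(\beta+1)}=e^{-i\pi(\beta+1)}$, so this costs only one sentence, but as written your final remark about ``the appearance of $2\pi$'' slightly overstates what is needed.
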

Similarly to \cite{MZ25}, complex numbers are essential to the proof but a mere convenience in stating equation \refeqn{symmetric_Descartes}. The left and right hand sides of \refeqn{symmetric_Descartes} are complex conjugates of each other, so their equality is equivalent to their imaginary part being zero. The equation above is thus equivalent to
\begin{equation}
\label{Eqn:vertex_eqn}
\sum_{I} (-1)^{\frac{ n-|I|-1 }{2} } \prod_{j \in I} m_{j} = 0,
\end{equation}
where the sum is over subsets $I \subseteq \Z/n\Z$ such that $|I| \equiv n-1$ mod $2$. It therefore has degree $n-1$, but is linear in each variable, and has $\binom{n}{n-1} + \binom{n}{n-3} + \binom{n}{n-5} + \cdots = 2^{n-1}$ terms.

When $n=3$, the equation becomes
\[
m_1 m_2 + m_2 m_3 + m_3 m_1 = 1
\]
from which the classic Descartes equation can be recovered.

\subsection{Circle packings, realisations, and conformal equivalence}
\label{Sec:realisations_equivalence}

Our general reference for circle packing is Stephenson's classic book \cite{StephensonKenneth2005Itcp}.

Let $K$ be a complex as in \refthm{circle_packing_eqs_general}.
 A $\Delta$-complex triangulating an oriented surface can be regarded as a collection of oriented triangles $\widehat{\Delta_j}$ with some pairs of edges glued together by orientation-reversing linear homeomorphisms (see e.g. \cite{Hatcher_text}). We refer to the \emph{triangles of $K$} as the images $\Delta_j$ of the triangles after such identifications. Each triangle $\Delta_j$ has 3 \emph{corners}, which we may define formally as the vertices of $\widehat{\Delta_j}$. A triangle $\Delta_j$ may have some of its edges or vertices identified in $K$, so may have fewer than 3 distinct edges or vertices; however it always has 3 distinct corners.

In case (1) of \refthm{circle_packing_eqs_general}, $K$ is a simplicial complex, so each triangle has distinct vertices; in case (2), this is true of the universal cover $\widetilde{K}$. 
The 1-skeleton of $K$ or $\widetilde{K}$ respectively is a graph without loops or multiple edges. 

 We aim to realise $K$ as the nerve of a circle packing. This circle packing is on an oriented Riemannian surface $\mathcal{S}$ which is: 
\begin{enumerate}
\item[(a)] the Euclidean plane $\R^2$ with its usual orientation, when $K$ is a simply connected plane surface; 
\item[(b)] the standard round unit sphere, with its usual orientation induced by an outward normal, when $K \cong S^2$; or 
\item[(c)] a Euclidean torus, when $K \cong T^2$.
\end{enumerate}
This circle packing will have a circle $C_v$ in $\mathcal{S}$ for each vertex $v$ of $K$. Two circles $C_v, C_w$ are required to be externally tangent when $v,w$ are joined by an edge. For a triangle $\Delta$ of $K$ with vertices $u,v,w$ in anticlockwise order, the mutually externally tangent circles $C_u, C_v, C_w$ are required also to be in anticlockwise order on $\mathcal{S}$. Such a collection of circles yields a map $\Phi \colon K \To \mathcal{S}$, with each vertex $v$ mapping to the centre of $C_v$, edges mapping to geodesic arcs, and each triangle of $K$ mapping to a geodesic triangle in $\mathcal{S}$ by an orientation-preserving homeomorphism. We refer to this map $\Phi$ as a \emph{realisation} of $K$. 
 The image $\Phi(\Delta)$ of $\Delta$ contains arcs of $C_u, C_v, C_w$ bounding sectors of closed discs centred at $u,v,w$ respectively. The complement of these sectors in $\Phi(\Delta)$ is called the \emph{interstice} of $\Delta$ (see Figure \ref{fig:Interstice}).
\begin{figure}[!h]
 \centering
\begin{tikzpicture}[scale=1.4]
 \draw[thick] (0,0)--(2,0)--(1,1.73)--(0,0);
 \filldraw[black] (0,0) -- (1,0) arc (0:60:1) -- cycle;
 \filldraw[black] (2,0) -- (1.5,0.866) arc (120:180:1) -- cycle;
 \filldraw[black] (1,1.73) -- (1.5,1.73/2) arc (-60:-120:1) -- cycle;
\end{tikzpicture} 
\caption{The interstice (in white) of a triangle $\Delta$ in a circle packing.}
 \label{fig:Interstice}
\end{figure}

 Suppose we have two circle packings realising $K$, i.e. two realisation maps $\Phi \colon K \To \mathcal{S}$ and $\Phi' \colon K \To \mathcal{S}'$. If $K$ is a closed disc, sphere, or plane surface, then $\mathcal{S} = \mathcal{S}'$, but if $K \cong T^2$ then $\mathcal{S},\mathcal{S'}$ will in general not be isometric. We say the two circle packings are \emph{conformally equivalent} if there is a conformal bijection $\Psi \colon \mathcal{S} \To \mathcal{S'}$ such that $\Phi' = \Psi \circ \Phi$. This is an equivalence relation on realisations of $K$, and we call the equivalence classes \emph{conformal classes} of circle packings realising $K$. When $K \cong D^2$, such $\Psi$ are just orientation-preserving similarities of the Euclidean plane, so the conformal classes are similarity classes. When $K \cong S^2$, such $\Psi$ are M\"{o}bius transformations. When $K \cong T^2$, such a $\Psi \colon \mathcal{S} \To \mathcal{S}'$ lifts to universal covers as an orientation-preserving conformal automorphism of the Euclidean plane, hence an orientation-preserving similarity.

The triangles around an interior vertex $v$ of $K$ must map under $\Phi$ to triangles around $\Phi(v)$ in the same cyclic order and orientation. Their angles at $\Phi(v)$ must sum to a positive integer multiple $2\pi (\beta + 1)$ of $2\pi$. The non-negative integer $\beta$ is the \emph{branching index} of $\Phi$ at $v$. When $\beta = 0$ at each vertex, we say the realisation $\Phi$ is \emph{unbranched}.

\subsection{Circle packing equations}
\label{Sec:circle_packing_eqns}

 We now define the circle packing equations for a complex $K$. 

We introduce a variable for each corner of each triangle. Denote by $V(K), E(K), F(K)$ the vertices, edges and faces of $K$. Denote by $C(K)$ the corners of triangles of $K$. In general, a corner can be described as a pair $(\Delta, \widehat{v})$, where $\Delta \in F(K)$, and $\widehat{v}$ one of the vertices of its unglued pre-image $\widehat{\Delta}$. When $K$ is a simplicial complex, these corners can be described more simply as pairs $(\Delta, v)$ where $\Delta \in F(K)$ and $v$ is one of its vertices. For us the universal cover $\widetilde{K}$ is always a simplicial complex, so we can also always describe corners as equivalence classes of pairs $(\widetilde{\Delta}, \widetilde{v})$ where $\widetilde{\Delta} \in F(\widetilde{K})$ is a triangle of $\widetilde{K}$ and $\widetilde{v}$ one of its vertices, and two pairs are equivalent if they are related by a deck transformation. We use the notation $(\Delta, \widehat{v})$ when necessary, but prefer the simpler notation $(\Delta, v)$ when possible. We denote the variable corresponding to corner $(\Delta,\widehat{v})$ by $m_{\Delta,\widehat{v}}$, i.e.
\[
\textbf{Variables:} \quad
 m_{\Delta, \widehat{v}} 
\quad \text{for each} \quad 
 (\Delta, \widehat{v}) 
\in C(K).
\]

Now we describe the circle packing equations, which always contain vertex, edge and face equations. Denote by $V_{int}(K)$ and $E_{int}(K)$ the sets of \emph{interior} vertices and edges of $K$, i.e. those not lying entirely on the boundary of $K$. There is a vertex equation for each interior vertex, an edge equation for each interior edge, and a triangle equation for each face of $K$.

Consider an interior vertex $v$ of $K$, i.e. $v \in V_{int}(K)$. As $K$ is topologically a surface, the degree of $v$ is finite; let it be $d$, so there are $d$ corners of triangles around $v$, which we denote $(\Delta_j, v)$. (This notation is somewhat abusive when $K$ is not a simplicial complex, since some of the $\Delta_j$ may coincide; however for us the universal cover $\widetilde{K}$ is always a simplicial complex.) Indeed, in the universal cover we have a 
\emph{combinatorial $d$-flower} consisting of $d$ distinct triangles around $v$; we can denote them in cyclic order as $\Delta_j$ over $j \in\Z/d\Z$. See \reffig{circle_packing_eqn_vars} (left). In any case we have variables $m_{\Delta_j,v}$ over $j \in \Z/d\Z$ corresponding to the $d$ corners at $v$. The vertex equation at $v$ is the generalised Descartes equation of \refeqn{vertex_eqn}, so we have:
\begin{equation}
\label{Eqn:vertex_eqn_list}
\textbf{Vertex equations:} \quad
\sum_{I} (-1)^{\frac{ d-|I|-1 }{2} } \prod_{j \in I} m_{\Delta_j,v} = 0 \quad \text{for each} \quad v \in V_{int}(K),
\end{equation}
where the sum is over subsets $I \subseteq \Z/d\Z$ such that $|I| \equiv d-1$ mod $2$. These equations are perhaps less imposing than the notation suggests: if $d=3$ and the variables are $a,b,c$, then the equation is $ab+bc+ca=1$; if $d=4$ with variables $a,b,c,d$, the equation is $abc+bcd+cda+dab=a+b+c+d$.

\begin{figure}
 \centering
 \begin{tabularx}{\linewidth} { 
 >{\centering\arraybackslash}X 
 >{\centering\arraybackslash}X 
 >{\centering\arraybackslash}X }
 \begin{tikzpicture}[scale=1.5]
 \filldraw[black] (0,0) circle (1pt);
 \draw (0.05,0.2) node {$v$};
 \draw[thick] (0,0)--(1,0);
 \draw[thick] (0,0)--(0.707,0.707);
 \draw[thick] (0,0)--(0.5,-0.866);
 \draw[thick] (0,0)--(-0.5,0.866);
 \draw[thick] (0,0)--(-0.866,-0.5);
 \node[above] at (0.95,0.1) {$\Delta_0$};
 \node[above] at (0.15,0.5) {$\Delta_1$};
 \node[left] at (-0.35,0.1) {$\Delta_2$};
 \node[below] at (-0.25,-0.5) {$\cdots$};
 \node[below] at (0.85,-0.15) {$\Delta_{d-1}$};
 \end{tikzpicture} &
 \begin{tikzpicture}
 \draw[thick] (0,0)--(2,0);
 \draw[thick] (0,0)--(-1.2,1.2);
 \draw[thick] (0,0)--(-1.2,-1.2);
 \draw[thick] (2,0)--(3.2,1.2);
 \draw[thick] (2,0)--(3.2,-1.2);
 \node[above] at (1,0) {$e$};
 \node[left] at (-0.05,0) {$v_1$};
 \node[right] at (2.05,0) {$v_2$};
 \node at (1,1) {$\Delta_1$};
 \node at (1,-1) {$\Delta_2$};
 \end{tikzpicture} &
 \begin{tikzpicture}[scale=1.4]
 \draw[thick] (0,0)--(2,0)--(1,1.6)--(0,0);
 \node[left] at (0,0) {$u$};
 \node[right] at (2,0) {$v$};
 \node[above] at (1,1.6) {$w$};
 \node at (1,0.6) {$\Delta$};
 \end{tikzpicture} \\
 \end{tabularx}
 \caption{Left: an interior vertex $v$. Centre: an interior edge $e$. Right: a face $\Delta$.}
 \label{Fig:circle_packing_eqn_vars}
\end{figure}

Consider an interior edge $e$ of $K$, i.e. $e \in E_{int}(K)$. Let the endpoints of $e$ be $v_1,v_2$, and let the two triangles adjacent to $e$ be $\Delta_1, \Delta_2$. See \reffig{circle_packing_eqn_vars} (centre). We denote the four adjacent corners by $(\Delta_i, v_j)$ over $i=1,2$ and $j=1,2$, accordingly. (Again, this notation is somewhat abusive when $v_1 = v_2$, which may happen when $K$ is not a simplicial complex. However, again since $\widetilde{K}$ is for us always a simplicial complex, $\Delta_1, \Delta_2$ are distinct, and by passing to the universal cover we may assume $v_1, v_2$ are distinct.) In any case we have four variables $m_{\Delta_i,v_j}$ over $i=1,2$ and $j=1,2$, for the four corners adjacent to $e$. Upon them we impose the
\begin{equation}
\label{Eqn:edge_eqn_list}
\textbf{Edge equations:} \quad
m_{\Delta_1, v_1} m_{\Delta_2, v_2} = m_{\Delta_1, v_2} m_{\Delta_2, v_1} \quad \text{for each} \quad e \in E_{int}(K).
\end{equation}

 Consider a triangle $\Delta$ of $K$, i.e. $\Delta \in F(K)$. Let its corners be $u,v,w$, so we have variables $m_{\Delta,u}, m_{\Delta,v}, m_{\Delta,w}$. See \reffig{circle_packing_eqn_vars} (right). Again when $K$ is not a simplicial complex, $u,v,w$ need not be distinct vertices. These variables are required to satisfy the 
\begin{equation}
\label{Eqn:triangle_eqn_list}
\textbf{Triangle equations:} \quad
m_{\Delta,u} m_{\Delta,v} m_{\Delta,w} = m_{\Delta,u} + m_{\Delta,v} + m_{\Delta,w} \quad \text{for each} \quad \Delta \in F(K).
\end{equation}

When $K$ is a plane surface (even an infinite one), this is the complete set of equations. 

\begin{definition}
\label{Def:circle_packing_eqns_disc}
Let $K$ be a simplicial complex triangulating a simply connected oriented plane surface. The \emph{circle packing equations} for $K$ are the vertex equations \refeqn{vertex_eqn_list}, edge equations \refeqn{edge_eqn_list}, and triangle equations \refeqn{triangle_eqn_list}.
\end{definition}

\refthm{circle_packing_eqs_general} asserts that solutions to these equations in positive reals yield circle packings realising $K$.

\subsection{Circle packing equations for spheres}

When $K$ triangulates a 2-sphere, we choose an arbitrary triangle $\Delta_0$ of $K$, which we think of as ``the triangle at the north pole". Removing (the interior of) $\Delta_0$ yields a finite triangulation of a closed disc with three edges on its boundary, which we denote $K_0 = K \setminus \Delta_0$. Provided that the circles avoid the north pole, stereographic projection sends a circle packing of $K$ in $S^2$ to a circle packing of $K_0$ in $\R^2$. We take stereographic projection to project from the north pole $N = (0,0,1)$, through the unit sphere $S^2 \subset \R^3$, to the $xy$-plane in the standard way, hence make the following definition.
\begin{definition}
\label{Def:good_packing}
Suppose $K$ is a simplicial complex triangulating $S^2$, and $\Delta_0$ is a triangle of $K$. A circle packing of $K$ with realisation map $\Phi \colon K \To S^2$ is \emph{good for $\Delta_0$} if $N$ lies in the interstice of $\Delta_0$, and $\Phi^{-1}(N)$ is a single point.
\end{definition}
Thus, in a good packing for $\Delta_0$, every circle and its interior (as defined by $\Phi$) avoids the north pole, and $\Phi|_{K_0}$ has image in $S^2 \setminus \{N\}$. Stereographic projection then yields a circle packing for $K_0$ in the Euclidean plane, where we may regard $\Delta_0$ as the exterior triangle of $K_0$, negatively oriented. 
We define circle packing equations for the pair $(K, \Delta_0)$, rather than just $K$, taking account of the distinguished status of $\Delta_0$.

There is a unique M\"{o}bius transformation sending three mutually tangent circles on $S^2$ to any other three, so to specify a unique conformal class of Euclidean circle packings corresponding to the unique conformal class of circle packings on $S^2$ under stereographic projection, we can specify this exterior triangle $\Delta_0$ to be any fixed similarity class of triangle. For simplicity we choose it to be equilateral, which, because of the negative orientation on $\Delta_0$, corresponds to choosing variables $m_\bullet = -\sqrt{3}$. Thus we have the
\begin{equation}
\label{Eqn:sphere-closing_eqn_list}
\textbf{Sphere-closing equations:} \quad
m_{\Delta_0,v} = - \sqrt{3}
\quad \text{for each corner $v$ of $\Delta_0$.}
\end{equation}
The sphere-closing equations imply the triangle equation for $\Delta_0$, which then becomes redundant. (We could equivalently take the sphere-closing equations to set the three variables in the corners of $\Delta_0$ to any three negative constants satisfying the triangle equation.) These variables being set to negative constants, a ``positive" solution to the circle packing equations means all other variables are positive; we effectively regard the variables for corners of $\Delta_0$ as constants. 

\begin{definition}
\label{Def:circle_packing_eqns_sphere}
Let $K$ be a complex triangulating an oriented 2-sphere, and $\Delta_0$ a triangle of $K$. The \emph{circle packing equations} for $(K, \Delta_0)$ consist of the vertex equations \refeqn{vertex_eqn_list}, the edge equations \refeqn{edge_eqn_list}, the triangle equations \refeqn{triangle_eqn_list}, and the sphere-closing equations \refeqn{sphere-closing_eqn_list}.
A set of circle packing equations for $K$ is a set of circle packing equations for $(K, \Delta_0)$, for some $\Delta_0$.
\end{definition}

A precise version of \refthm{circle_packing_eqs_general} in the spherical case is then as follows.
\begin{theorem}
\label{Thm:circle_packing_eqs_spherical}
Let $K$ be a simplicial complex triangulating an oriented $2$-sphere, and $\Delta_0$ a triangle of $K$. 
Real solutions of the circle packing equations for $(K, \Delta_0)$, such that all $m_{\Delta,v} > 0$ whenever $\Delta \neq \Delta_0$, correspond bijectively to conformal classes of spherical circle packings realising $K$ which are good for $\Delta_0$.
\end{theorem}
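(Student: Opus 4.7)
The plan is to reduce to the already-established plane case \refthm{circle_packing_eqs_general}(1), applied to the disc triangulation $K_0 = K \setminus \Delta_0$, via stereographic projection $\sigma \colon S^2 \setminus \{N\} \To \R^2$. The bridge is a geometric correspondence between good spherical packings modulo M\"{o}bius transformations and Euclidean packings of $K_0$ with equilateral outer boundary modulo similarity, together with an algebraic correspondence between the spherical and planar systems of equations.

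In the forward direction, a good spherical packing $\Phi \colon K \To S^2$ for $\Delta_0$ sends $K_0$ into $S^2 \setminus \{N\}$, so $\sigma \circ \Phi|_{K_0}$ is a Euclidean realization of $K_0$ with three mutually tangent boundary circles whose common unbounded exterior is the image of $\Delta_0$. Since the M\"{o}bius group acts simply transitively on positively oriented ordered triples of mutually tangent circles in $S^2$, each conformal class admits a unique representative in which these three boundary circles are symmetric around $N$; projection produces a (negatively oriented) equilateral triple in $\R^2$. A direct computation from \refeqn{mj_def}, taking the negative square root to record the orientation reversal of $\Delta_0$, yields $m_{\Delta_0,v} = -\sqrt{3}$ at each corner of $\Delta_0$, so sphere-closing holds; all other equations of \refdef{circle_packing_eqns_sphere} then hold from the underlying Euclidean geometry via \refthm{gen_Descartes} and the analogous identities at edges and triangles.

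Conversely, given a real solution with $m_{\Delta_0,v} = -\sqrt{3}$ and all other $m_{\Delta,v} > 0$, the equations involving only variables with $\Delta \neq \Delta_0$ constitute exactly the plane circle packing equations for $K_0$, so \refthm{circle_packing_eqs_general}(1) supplies a conformal class of Euclidean realizations of $K_0$. I would then check that the remaining spherical equations enforce the geometry required to lift back to $S^2$: the edge equations at boundary edges of $K_0$ reduce, via sphere-closing, to $m_{\Delta_j,v_1} = m_{\Delta_j,v_2}$, which by \refeqn{mj_def} forces $\kappa_{v_1} = \kappa_{v_2}$ at the two endpoints and iteratively makes the three corner circles of $\Delta_0$ equilateral; the triangle equation for $\Delta_0$ is immediate from $m_{\Delta_0,v}=-\sqrt{3}$; and the vertex equations at corners of $\Delta_0$ are the generalised Descartes identity of \refthm{gen_Descartes} for the full $d$-flower at $v$, forcing the Euclidean angles at $v$ from the $K_0$ triangles to sum to $5\pi/3$ so that the spherical flower closes at $2\pi$. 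Applying $\sigma^{-1}$ and adjoining the interstice of the equilateral triple containing $N$ as the image of $\Delta_0$ produces a good spherical packing for $\Delta_0$, and the two constructions are mutually inverse on conformal classes.

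The principal obstacle is the algebraic-to-geometric step in the reverse direction: confirming that the vertex and edge equations at the boundary of $K_0$, combined with sphere-closing, genuinely encode the equilateral and angle-closing conditions rather than being vacuous constraints. This depends on a converse to \refthm{gen_Descartes}, namely that the identity \refeqn{vertex_eqn_list} among the $m$ variables implies the existence of a Euclidean closed $d$-flower, which is part of the content of \refthm{circle_packing_eqs_general}(1) applied to a single flower (a triangulation of a disc by a central vertex and $d$ boundary triangles); and on carefully tracking the sign convention in \refeqn{mj_def} for negatively oriented triangles, most transparently via the spinorial construction of \cite{MZ25} or the trigonometric interpretation noted in the introduction. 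With these ingredients in hand, the bijection asserted by the theorem assembles as the composition of the stereographic and M\"{o}bius normalization step with the plane case of \refthm{circle_packing_eqs_general}.
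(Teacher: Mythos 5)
Your proposal follows essentially the same route as the paper: M\"{o}bius-normalise the three circles of $\Delta_0$ to a symmetric triple about $N$, stereographically project to reduce to the disc case for $K_0$ with an equilateral outer boundary and $m_{\Delta_0,v}=-\sqrt{3}$, and in the reverse direction use the edge equations at the edges of $\Delta_0$ to force the three boundary circles to be congruent before lifting back to $S^2$. One small slip: at a vertex $v_j$ of $\Delta_0$ the angles contributed by the triangles of $K_0$ sum to $\pi/3$ (the interior angle of the equilateral image triangle), not $5\pi/3$, the latter being the exterior angle assigned to the corner of $\Delta_0$; in any case the paper shows the vertex equations at these three vertices are redundant (\refprop{circle_packing_reduction}) rather than needed to enforce closure.
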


We will see that, after imposing all the circle packing equations for $K_0$, any two of the three edge equations for the edges of $\Delta_0$ imply the third, as well as the vertex equations at the vertices of $\Delta_0$. We define a \emph{reduced} set of circle packing equations in \refdef{reduced_eqns_S2}, which will be irredundant and equivalent to the full set.

\subsection{Circle packing equations for tori}
\label{Sec:circle_packing_eqns_tori}

In the toroidal case, we need to consider certain closed curves. In a triangle $\Delta$ of a complex, a \emph{normal arc} is an arc properly embedded in $\Delta$, each of whose endpoints is an interior point of a side of $\Delta$, with its two endpoints on distinct sides of $\Delta$. Define a \emph{normal curve} in a complex $K$ to be a curve $\gamma$ in $K$ which is obtained by joining a sequence of normal arcs, end to end. We consider both \emph{closed normal curves}, which join a finite sequence of normal arcs in cyclic order, and \emph{open normal curves}, which join a sequence (finite or countably infinite) of normal arcs in a total order. (Such curves are ``normal" in the sense of normal surface theory.) Any simple closed curve $\gamma$ on $K$ is homotopic to a closed normal curve: a homotopy makes $\gamma$ intersect the 1-skeleton of $K$ transversely at interior points of edges, and a further homotopy of ``finger moves" removes ``backtracking" intersections of $\gamma$ with edges of $K$, yielding a normal curve (or a contractible curve in a single triangle, which can straightforwardly be made normal).

Suppose $K \cong T^2$, and consider two oriented closed curves $\lambda, \mu$ generating $H_1 (T^2)$, which after a homotopy we may assume are closed normal curves. We regard $\lambda$ as a sequence of oriented normal arcs $\lambda_j$, over $j \in \Z/n_\lambda \Z$ for some positive integer $n_\lambda$, concatenated in cyclic order. Each normal arc $\lambda_j$ lies in a triangle $\Delta^\lambda_j$ of $K$, with endpoints on two sides of $\Delta^\lambda_j$. These two sides meet at a unique corner of $\Delta$, which we denote $c^\lambda_j$. (Note as $K$ need not be a simplicial complex, all corners of $\Delta$ may be at the same vertex of $K$.) We say $\lambda_j$ \emph{cuts off} the corner $c^\lambda_j$. The triangle $\Delta^\lambda_j$ inherits an orientation from $K$, and $\lambda_j$ inherits an orientation from $\lambda$, so $\lambda_j$ proceeds around $c^\lambda_j$ in an anticlockwise or clockwise direction. We define a number $\delta^\lambda_j$ as $+1$ or $-1$ accordingly as $\lambda_j$ cuts off $c^\lambda_j$ in an anticlockwise or clockwise direction. See \reffig{normal_arcs}.

\begin{figure}
 \centering
 \begin{tikzpicture}[scale=1.8]
 \draw[thick] (0,0)--(2,0)--(1,1.6)--(0,0);
				\draw[thick] (1,1.6)--(2.5,1.6)--(2,0);
				\draw[thick] (2.5,1.6)--(4,1.6)--(2,0);
				\draw (1,0.2) node {$\Delta^\lambda_1$};
				\draw (1.75,1.4) node {$\Delta^\lambda_2$};
				\draw (3.1,1.4) node {$\Delta^\lambda_3$};
				\draw[ultra thick, ->] (-0.3,0.8)--(4.1,0.8);
				\draw (4.3,0.8) node {$\lambda$};
				\draw (1,0.6) node {$\lambda_1$};
				\draw (1.9,0.95) node {$\lambda_2$};
				\draw (2.7,0.95) node {$\lambda_3$};
 \filldraw[black] (1,1.6) circle (1pt);
				\draw (1,1.3) node {$c^\lambda_1$};
				\filldraw[black] (2,0) circle (1pt);
				\draw (1.95,0.4) node {$c^\lambda_2$};
				\draw (2.3,0.45) node {$c^\lambda_3$};
 \end{tikzpicture} \\
 \caption{Normal arcs along a normal curve $\lambda$. The normal arcs $\lambda_1, \lambda_2, \lambda_3$ cut off corners $c^\lambda_1, c^\lambda_2, c^\lambda_3$ of triangles $\Delta^\lambda_1, \Delta^\lambda_2, \Delta^\lambda_3$ respectively. Since $\lambda_1$ proceeds anticlockwise around $c^\lambda_1$, and $\lambda_2$ and $\lambda_3$ proceed clockwise around $c^\lambda_2$ and $c^\lambda_3$, we have $\delta^\lambda_1 = 1$ and $\delta^\lambda_2 = \delta^\lambda_3 = -1$.}
 \label{Fig:normal_arcs}
\end{figure}

Similarly, choosing a basepoint on $\mu$, we can regard $\mu$ as a cyclic sequence of normal arcs $\mu_j$ over $j \in \Z/n_\mu \Z$ for some positive integer $n_\mu$. Each $\mu_j$ then cuts off a corner $c^\mu_j$ in a triangle $\Delta^\mu_j$ of $K$, and we define $\delta^\mu_j$ as $+1$ or $-1$ accordingly as $\mu_j$ cuts off $c^\mu_j$ in anticlockwise or clockwise fashion. As we will see, the holonomy equations then require that both
\[
\prod_{j=1}^{n_\lambda} \left( m_{\Delta^\lambda_j,c^\lambda_j} + i \right)^{\delta^\lambda_j}
\quad \text{and} \quad
\prod_{j=1}^{n_\mu} \left( m_{\Delta^\mu_j,c^\mu_j} + i \right)^{\delta^\mu_j} 
\]
be real. Since, for real $m$, we have $(m+i)^{-1}$ is a positive multiple of $m-i$, the factors $(m_\bullet +i)^{\delta}$ can be replaced by $(m_\bullet + i \delta )$. Expanding out the imaginary part as zero, the final circle packing equations in the torus case are the
\begin{align}
\label{Eqn:holonomy_eqns_1}
\textbf{Holonomy equations:}& \quad
\sum_{I_\lambda} 
\left( -1 \right)^{\frac{n_\lambda - |I_\lambda| - 1}{2}}
\prod_{j \in I_\lambda} m_{\Delta^\lambda_j, c^\lambda_j} 
\prod_{j \in I'_\lambda} \delta^\lambda_j = 0 \\
\label{Eqn:holonomy_eqns_2}
\quad \text{and}& \quad
\sum_{I_\mu} 
\left( -1 \right)^{\frac{n_\mu - |I_\mu| - 1}{2}}
\prod_{j \in I_\mu} m_{\Delta^\mu_j, c^\mu_j} 
\prod_{j \in I'_\mu} \delta^\mu_j = 0.
\end{align}
Here the first sum is over subsets $I_\lambda \subseteq \Z/n_\lambda \Z$ such that $|I_\lambda| \equiv n_\lambda - 1$ mod $2$, and $I'_\lambda$ is the complement of $I_\lambda$, i.e. $I'_\lambda = (\Z/n_\lambda \Z) \setminus I_\lambda$. Similarly, the second sum is over $I_\mu \subseteq \Z/n_\mu \Z$ such that $|I_\mu| \equiv n_\mu - 1$ mod $2$, and $I'_\mu = (\Z/n_\mu \Z) \setminus I_\mu$.
As with the vertex equations, the holonomy equations are perhaps less imposing than they appear; we will see an example in \refsec{standard_torus}.

\begin{definition}
\label{Def:circle_packing_eqns_torus}
Let $K$ be a complex triangulating an oriented torus.
Let $\lambda, \mu$ be oriented closed normal curves forming a basis for $H_1 (K)$. The \emph{circle packing equations} for $(K, \lambda, \mu)$ are the vertex equations \refeqn{vertex_eqn_list}, the edge equations \refeqn{edge_eqn_list}, the triangle equations \refeqn{triangle_eqn_list}, and the holonomy equations \refeqn{holonomy_eqns_1} and \refeqn{holonomy_eqns_2}. A set of circle packing equations for $K$ is a set of circle packing equations for $(K, \lambda, \mu)$, for some $\lambda, \mu$.
\end{definition}

 \refthm{circle_packing_eqs_general} asserts that solutions to a set of circle packing equations for $K$ in positive reals yield circle packings realising $K$.

When $K$ is planar, the set of circle packing equations is unique. However when $K$ is a sphere or torus, the choices of $\Delta_0$ or $\lambda, \mu$ mean that the set of circle packing equations is not unique. However, any such choice provides us with a set of circle packing equations to which the theorem applies. 

\subsection{ Unbranched circle packing equations}
If we are interested in \emph{unbranched} realisations of $K$, then we may replace the vertex equations with \emph{unbranched vertex equations} as follows. The vertex equations, in the form of the generalised Descartes equation \refeqn{symmetric_Descartes}, assert that $\prod_{j=1}^d (m_j + i)$ is real, so that its argument is an integer multiple of $\pi$. The unbranched condition is that this argument is exactly $\pi$,
\begin{equation}
\label{Eqn:unbranched_vertex_eqn_list} 
\textbf{Unbranched vertex equations:} \quad
\sum_{j=1}^d \arg \left( m_{\Delta_j,v} + i \right) = \pi, \quad
\text{for each} \quad v \in V_{int}(K).
\end{equation}

Similarly for the holonomy equations, they assert that $\prod \left( m_\bullet + i \delta_\bullet \right)$ is real, so that the argument is an integer multiple of $\pi$, and the unbranched condition is that this argument is exactly $0$,
\begin{align}
\label{Eqn:unbranched_holonomy_eqns_1}
\textbf{Unbranched holonomy equations:}& \quad
\sum_{j=1}^{n_\lambda} \arg \left( m_{\Delta^\lambda_j,c^\lambda_j} + i \delta^\lambda_j \right) = 0 \\
\label{Eqn:unbranched_holonomy_eqns_2}
\quad \text{and}& \quad
\sum_{j=1}^{n_\mu} \arg \left( m_{\Delta^\mu_j, c^\mu_j} + i \delta^\mu_j \right) = 0.
\end{align}
Here and throughout, we take the principal value of the argument of a nonzero complex number to lie in $(-\pi, \pi]$. A set of \emph{unbranched circle packing equations} for $K$ is then be defined as a set of circle packing equations for $K$, as in \refdef{circle_packing_eqns_disc}, \refdef{circle_packing_eqns_sphere} or \refdef{circle_packing_eqns_torus}, but with vertex equations \refeqn{vertex_eqn_list} replaced by unbranched vertex equations \refeqn{unbranched_vertex_eqn_list} and, if necessary, holonomy equations \refeqn{holonomy_eqns_1} and \refeqn{holonomy_eqns_2} replaced by the unbranched holonomy equations \refeqn{unbranched_holonomy_eqns_1} and \refeqn{unbranched_holonomy_eqns_2}. Of course, unlike the other equations, the unbranched vertex and holonomy equations are not polynomial equations in the variables $m_\bullet$.

 \refthm{circle_packing_eqs_general} then has an unbranched version as follows.

\begin{theorem}
\label{Thm:circle_packing_eqs_general_unbranched}
Let $K$ be a complex satisfying the hypotheses of \refthm{circle_packing_eqs_general}. Then the positive real solutions of a set of unbranched circle packing equations for $K$ correspond bijectively to conformal classes of unbranched Euclidean or unbranched good spherical circle packings realising $K$.
\end{theorem}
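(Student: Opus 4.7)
The plan is to derive \refthm{circle_packing_eqs_general_unbranched} from \refthm{circle_packing_eqs_general} by refining, within the bijection it provides, to those solutions corresponding to unbranched realisations. The key ingredient is the trigonometric interpretation of the $m$-variables: for the corners of a triangle $\Delta$ in a realised circle packing, the corner variable $m_{\Delta,v}$ equals $\cot(\alpha_{\Delta,v}/2)$, where $\alpha_{\Delta,v}$ is the angle at that corner. (This is the Agol observation mentioned in the introduction, and follows directly from formula \refeqn{mj_def} by applying the law of cosines to the triangle of circle centres.) Consequently, for any $m_{\Delta,v} > 0$, one has $m_{\Delta,v} + i = e^{i\alpha_{\Delta,v}/2}/\sin(\alpha_{\Delta,v}/2)$, so $\arg(m_{\Delta,v}+i) = \alpha_{\Delta,v}/2 \in (0,\pi/2)$.

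For an interior vertex $v$ of degree $d$, this identity converts the polynomial vertex equation \refeqn{vertex_eqn_list}, i.e.\ $\prod_j(m_{\Delta_j,v}+i) \in \R$, into the condition $\sum_j \alpha_{\Delta_j,v} \in 2\pi\Z$, which is exactly the condition that a combinatorial flower around $v$ realises as a (possibly branched) geometric flower. If the branching index is $\beta$, then $\sum_j \alpha_{\Delta_j,v} = 2\pi(\beta+1)$, equivalently $\sum_j \arg(m_{\Delta_j,v}+i) = \pi(\beta+1)$. Since each summand lies in $(0,\pi/2)$, the unbranched vertex equation \refeqn{unbranched_vertex_eqn_list} picks out precisely $\beta = 0$; a positive real solution of the polynomial vertex equation is unbranched at $v$ if and only if it additionally satisfies \refeqn{unbranched_vertex_eqn_list}. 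In the plane and spherical cases, this suffices: \refthm{circle_packing_eqs_general} gives the bijection with all (possibly branched) realisations, and replacing each vertex equation by its unbranched counterpart carves out exactly the unbranched ones.

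For the toroidal case, the same dictionary applies to the holonomy equations. For $m>0$ and $\delta=\pm 1$, we have $\arg(m+i\delta) = \delta\arg(m+i) = \delta\alpha/2$, so the argument of the holonomy product $\prod_j(m_{\Delta^\lambda_j,c^\lambda_j}+i\delta^\lambda_j)$ equals $\tfrac{1}{2}\sum_j \delta^\lambda_j\,\alpha_{\Delta^\lambda_j,c^\lambda_j}$, which geometrically is the rotational part of the developing-map holonomy along $\lambda$ (with the signs $\delta^\lambda_j$ encoding whether each normal arc turns anticlockwise or clockwise around its cut-off corner). The polynomial holonomy equation \refeqn{holonomy_eqns_1} forces this rotation to lie in $\pi\Z$, so that $\lambda$ lifts to a translation composed with a rotation by some multiple of $\pi$; the unbranched version \refeqn{unbranched_holonomy_eqns_1} cuts this down to zero rotation, and likewise for $\mu$ via \refeqn{unbranched_holonomy_eqns_2}. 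Combined with the unbranched vertex equations, this ensures the developing map carries no branching, neither localised at a vertex nor hidden in the topology of the torus along $\lambda$ or $\mu$.

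The principal obstacle in making this rigorous is pinning down the identification $\arg(m_{\Delta,v}+i) = \alpha_{\Delta,v}/2$ inside the bijection of \refthm{circle_packing_eqs_general}, uniformly across Euclidean and good spherical realisations, and then verifying that the sign conventions $\delta^\bullet_j$ in the holonomy equations are set up so that the accumulated argument really does match the rotational holonomy along $\lambda$ and $\mu$, rather than its negative or some other reparametrisation. Once this dictionary is in place, \refthm{circle_packing_eqs_general_unbranched} becomes an equation-by-equation refinement of \refthm{circle_packing_eqs_general}: the unbranched vertex and holonomy equations strengthen their polynomial counterparts by fixing the corresponding integer-valued branching data to zero, and the geometric bijection transfers this strengthening directly to the realisations.
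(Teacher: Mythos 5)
Your proposal is correct and follows essentially the same route as the paper: both hinge on the dictionary $m_{\Delta,v}=\cot(\theta_{\Delta,v}/2)$, $\arg(m_{\Delta,v}+i)=\theta_{\Delta,v}/2$ (\refprop{m_meaning} and \reflem{exp_Agol}), under which the unbranched vertex and holonomy equations pin the angle sums to exactly $2\pi$ and the total turning along $\lambda,\mu$ to exactly $0$, so the bijection of \refthm{circle_packing_eqs_general} restricts to one between unbranched solutions and unbranched packings. Two cosmetic slips do not affect the argument: the rotational turning along $\lambda$ is $\sum_j\delta^\lambda_j\theta_{c^\lambda_j}$, i.e.\ twice the argument of the holonomy product (so the polynomial holonomy equation already forces the holonomy to be a pure translation, and the unbranched version controls the total turning rather than the isometry type), and at the vertices of $\Delta_0$ in the spherical case one summand is $\arg(-\sqrt{3}+i)=5\pi/6\notin(0,\pi/2)$, though the angle count still closes.
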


\subsection{Geometric meaning of the circle packing equations}

The variables $m_{\Delta,v}$ in our circle packing equations are related to curvatures of circles by equations like \refeqn{mj_def}, but they also have a straightforward geometric interpretation: they are cotangents of half the angles in the corresponding corners of Euclidean triangles in the map $\Phi$ realising $K$. (In the case $K \cong S^2$, these are triangles in the Euclidean plane after stereographic projection.)

In \cite{MZ25}, the variables $m_j$ arose from spinorial considerations, as follows. Let $K$ be a combinatorial $n$-flower, with central vertex $v_\infty$ and other vertices $v_j$ over $j \in \Z/n\Z$ in anticlockwise cyclic order. A circle packing realising $K$, after inversion in the central circle, can be regarded as a collection of horocycles in the conformal disc model of the hyperbolic plane, and by the first author's work \cite{M_spinors_horospheres} these can be associated to spinors $(\xi, \eta) \in \R^2$. Thus we obtain spinors $(\xi_j, \eta_j)$ for each $j \in \Z/n\Z$, and we define associated complex variables $z_j = \xi_j + i \eta_j$. Then the $m_j$ arose as the real part of $z_{j-1} \overline{z_j}$.

Agol's observation \cite{Agol_email} leads to the equality
\[
m_j = \cot \left( \frac{\theta_j}{2} \right),
\]
where $\theta_j$ is the Euclidean angle subtended at $\Phi(v_\infty)$ by $\Phi(v_{j-1})$ and $\Phi(v_j)$.

More generally, when we have $K$ as in \refthm{circle_packing_eqs_general} triangulating an open or closed disc or torus, then in a realisation $\Phi \colon K \To \mathcal{S}$, the Riemannian surface $\mathcal{S}$ is Euclidean, and each triangle $\Delta$ of $K$ is realised as a Euclidean triangle. We denote by $\theta_{\Delta,\widehat{v}}$ the angle in the corner $(\Delta, \widehat{v})$. Then we have the following.
\begin{prop}
\label{Prop:m_meaning}
 
Suppose $K$ is a $\Delta$-complex satisfying the hypotheses of \refthm{circle_packing_eqs_general}, and $\{m_{\Delta,\widehat{v}}\}_{(\Delta,\widehat{v}) \in C(K)}$ is a positive real solution of a set of circle packing equations for $K$, corresponding by \refthm{circle_packing_eqs_general} to a circle packing with realisation $\Phi \colon K \To \mathcal{S}$, where $\mathcal{S}$ is Euclidean. Let this circle packing have curvatures $\{\kappa_v\}_{v \in V(K)}$ and angles $\{\theta_{\Delta,\widehat{v}}\}_{(\Delta,\widehat{v}) \in C(K)}$. Then each $m_{\Delta,\widehat{v}}$ is related to the angles $\theta_{\Delta,\widehat{v}}$ by
\[
 m_{\Delta,\widehat{v}}
 = \cot \left( \frac{ \theta_{\Delta,\widehat{v}} } {2} \right),
\]
and to the curvatures $\kappa_v$ by
\begin{equation}
\label{Eqn:ms_in_terms_of_ks}
 m_{\Delta,\widehat{v}} = \sqrt{ \left( \frac{\kappa_u}{\kappa_v} + 1 \right) \left( \frac{\kappa_w}{\kappa_v} + 1 \right) - 1}
= \frac{\sqrt{\kappa_u \kappa_v + \kappa_v \kappa_w + \kappa_w \kappa_u}}{\kappa_v}.
\end{equation}
where $\widehat{u},\widehat{v},\widehat{w}$ are the vertices of $\widehat{\Delta}$, and $u,v,w$ the corresponding vertices of $\Delta$.
\end{prop}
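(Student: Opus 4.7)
The plan is to observe that the claim is local in each triangle of $K$, and to reduce the proposition to a classical Euclidean calculation involving three mutually externally tangent circles. By \refthm{circle_packing_eqs_general}, the solution $\{m_{\Delta,\widehat{v}}\}$ corresponds to a realisation $\Phi$ in which every triangle $\Delta$ of $K$ (respectively of $\widetilde{K}$) with vertices $u,v,w$ is mapped to a Euclidean triangle $\Phi(\Delta)$ with vertices at the circle centres $\Phi(u),\Phi(v),\Phi(w)$. Mutual external tangency of $C_u, C_v, C_w$ forces the side lengths to be $r_u+r_v$, $r_v+r_w$, $r_w+r_u$, where $r_i = 1/\kappa_i$.

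First, I would carry out the half-angle computation in this one triangle. With semi-perimeter $s = r_u + r_v + r_w$ and the side opposite $v$ having length $r_u + r_w$, we have $s-(r_u+r_w) = r_v$, $s-(r_u+r_v) = r_w$, and $s-(r_v+r_w) = r_u$. The standard half-angle identity $\tan(A/2) = \sqrt{(s-b)(s-c)/(s(s-a))}$ therefore yields
\[
\cot\!\left( \frac{\theta_{\Delta,\widehat{v}}}{2} \right)
= \sqrt{\frac{r_v(r_u+r_v+r_w)}{r_u r_w}}.
\]
Substituting $r_i = 1/\kappa_i$ and simplifying gives $\sqrt{\kappa_u \kappa_v + \kappa_v \kappa_w + \kappa_w \kappa_u}/\kappa_v$, which is the right-hand side of \refeqn{ms_in_terms_of_ks}; the algebraic rearrangement $(\kappa_u/\kappa_v + 1)(\kappa_w/\kappa_v + 1) - 1 = (\kappa_u \kappa_v + \kappa_v \kappa_w + \kappa_w \kappa_u)/\kappa_v^2$ matches the two expressions in \refeqn{ms_in_terms_of_ks}.

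Second, I would identify this common quantity with $m_{\Delta,\widehat{v}}$ itself. The proof of \refthm{circle_packing_eqs_general} pins down $m_{\Delta,\widehat{v}}$ locally at the corner $(\Delta,\widehat{v})$ via the spinorial curvature recipe of \cite{MZ25}, which is precisely \refeqn{mj_def} read off for the pair of vertices adjacent to $\widehat{v}$ in $\Delta$; this is the same expression as the right-hand side of \refeqn{ms_in_terms_of_ks}. Combining with the Euclidean computation above gives both identities simultaneously. As a consistency check one may note that the triangle equation \refeqn{triangle_eqn_list} then reduces to the well-known identity $\cot(\alpha/2)\cot(\beta/2)\cot(\gamma/2) = \cot(\alpha/2)+\cot(\beta/2)+\cot(\gamma/2)$ valid whenever $\alpha+\beta+\gamma = \pi$, and the generalised Descartes (vertex) equation \refeqn{vertex_eqn_list} becomes the statement that the Euclidean angles around an interior vertex sum to an integer multiple of $2\pi$, matching Agol's cotangent interpretation in the flower case.

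The main obstacle is the bookkeeping around signs and branches: the square root in \refeqn{ms_in_terms_of_ks} is a priori ambiguous, and one must verify that positivity of $m_{\Delta,\widehat{v}}$ forces the correct branch, namely the one giving $\cot$ of a half-angle in $(0,\pi/2)$ rather than its negative. In the spherical case one also needs to carry the argument through stereographic projection, which converts the packing on $S^2$ to a Euclidean packing of $K_0$ and lets the same local computation be applied to each triangle (after taking appropriate care with the distinguished triangle $\Delta_0$, whose corners carry the negative values $-\sqrt{3}$ in keeping with its reversed orientation and the fact that $\cot(-\pi/6) = -\sqrt{3}$).
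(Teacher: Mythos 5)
Your proposal is correct and follows essentially the same route as the paper: the substantive content is the classical Soddy-circle half-angle identity $\cot ( \theta_{\Delta,\widehat{v}} / 2 ) = \sqrt{\kappa_u \kappa_v + \kappa_v \kappa_w + \kappa_w \kappa_u}/\kappa_v$ (the paper's \reflem{edge_eqn_fact}), and the identification of this common quantity with the solution variable $m_{\Delta,\widehat{v}}$ is exactly how the bijection of \refthm{circle_packing_eqs_general} is set up. The only cosmetic difference is that the paper verifies the relations \refeqn{desired_relations} persist step by step through its inductive gluing construction (and through stereographic projection in the spherical case) rather than invoking a purely local ``spinorial recipe'', but the underlying argument is the same.
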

 The same relations hold when $K \cong S^2$, after stereographic projection to $\R^2$, as we discuss in \refsec{spherical_case_eqns} and \refsec{spheres}.
The same relations also hold for solutions of the unbranched circle packing equations, since they form a subset of solutions of the circle packing equations.

As we will see, the vertex equations can be interpreted as saying that the angles around a vertex sum to a multiple of $2\pi$; the unbranched vertex equations as saying that angles sum to exactly $2\pi$; and the triangle equations as saying that the angles in a triangle sum to $\pi$. We prove this in \refsec{angles_cotangents}. In other words, angle sum conditions on the angles $\theta_\bullet$ yield polynomial conditions in the variables $m_\bullet = \cot(\theta_\bullet/2)$.

The edge equations can also be interpreted via elementary Euclidean geometry, relating the cotangents of half angles in a triangle to the curvatures of its Soddy circles. The relevant Euclidean fact is \reflem{edge_eqn_fact} and we apply it to the edge equations in \reflem{edge_eqns}.

There are several senses in which the circle packing equations are, roughly speaking, ``spinorial". In addition to the variables $m_\bullet$ arising naturally from spinors as in \cite{MZ25}, it is clear from the above that the \emph{half}-angles in the realisation $\Phi$ are crucial variables. These half angles arise quite literally from square roots of complex numbers. As we will see in \reflem{exp_Agol}, for an angle $\theta$ and its corresponding $m$ variable,
\[
e^{i\theta} = \frac{m + i}{m - i}.
\]
The numerator and denominator being conjugates, it follows that $\arg(m + i) = -\arg(m - i) = \theta/2$, so that each $m + i$ is, up to a positive real multiple, a square root of $e^{i\theta}$. The vertex and triangle equations are derived straightforwardly from elementary complex number geometry using these $(m + i)$.

From the variables $m_\bullet$ in the circle packing equations, the curvatures $\kappa_v$ of the circles $C_v$ can be recovered, but only up to an overall constant. Indeed, as seen in \refeqn{ms_in_terms_of_ks}, each $m_\bullet$ only depends on the \emph{ratios} of curvatures. If all $m_\bullet$ are known, then along an edge $e$ with vertices $v,w$, letting $\Delta$ be a triangle adjacent to $e$, we have corners $(\Delta,v), (\Delta,w)$ and the equality
\[
\frac{m_{\Delta,v}}{m_{\Delta,w}} = \frac{\kappa_w}{\kappa_v}.
\]
This follows straightforwardly from using the final expression of \refeqn{ms_in_terms_of_ks} for each $m_\bullet$. Indeed, the edge equations \refeqn{edge_eqn_list} can be interpreted as saying that this ratio of curvatures should not depend on the choice of $\Delta$ adjacent to $e$. The fact that the $m_\bullet$ only determine the $\kappa_v$ up to an overall factor is why \refthm{circle_packing_eqs_general} refers to \emph{conformal} classes of circle packings. We can say that the curvatures $\{\kappa_v\}_{v \in V}$ are determined up to an overall factor, or as a point of a projective space $\R_+ P^{n-1}$, which is $\R_+^n$ modulo scaling by positive factors, and homeomorphic to $\R_+^{n-1}$.

 When $K$ is a closed disc, by the boundary value theorem of circle packing theory (e.g. \cite[thm. 11.6]{StephensonKenneth2005Itcp}), after specifying the curvatures of boundary circles and branch structure (see e.g. \cite[def. 11.4]{StephensonKenneth2005Itcp}) at each internal vertex, there is a circle packing realising $K$, unique up to isometry. Combining this with \refthm{circle_packing_eqs_general} we obtain the following.
\begin{cor}
 Suppose $K$ is a closed disc. Let the boundary vertices of $K$ be $v_1, \ldots, v_n$, let $\beta$ be a branch structure on $K$, and let $(c_1, \ldots, c_n) \in \R_+^n$. Then there exists a unique positive real solution $\{m_{\Delta,\widehat{v}}\}_{(\Delta,\widehat{v}) \in C(K)}$ of the circle packing equations for $K$ such that the curvatures $\{\kappa_v\}_{v \in V(K)}$ determined by the $m_{\Delta,\widehat{v}}$ satisfy $[\kappa_{v_1} : \cdots : \kappa_{v_n}] = [c_1 : \cdots : c_n]$.
\qed
\end{cor}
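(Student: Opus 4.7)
The plan is to combine Theorem~\ref{Thm:circle_packing_eqs_general} with the boundary value theorem of classical circle packing theory quoted immediately before the corollary. By Theorem~\ref{Thm:circle_packing_eqs_general}, positive real solutions of the circle packing equations for $K$ are in bijection with conformal classes of Euclidean circle packings realising $K$. On the circle packing side, Proposition~\ref{Prop:m_meaning} (specifically the formula \refeqn{ms_in_terms_of_ks}) lets us read off from the variables $m_{\Delta,\widehat{v}}$ all the pairwise curvature ratios $\kappa_w/\kappa_v$ along edges; since $K$ is connected, these assemble into a well-defined projective class $[\kappa_{v_1}:\cdots:\kappa_{v_n}]\in\R_+P^{n-1}$. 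Similarly, the branching index at each interior vertex $v$ is encoded in the solution via $\sum_j \arg(m_{\Delta_j,v}+i)=\pi(\beta_v+1)$, as discussed in \refsec{circle_packing_eqns} and to be developed further in the section on angles and cotangents.

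For existence, I would fix a concrete representative $(c_1,\ldots,c_n)$ of the given projective class and apply the boundary value theorem to the branch structure $\beta$ and the boundary curvatures $(c_1,\ldots,c_n)$. This yields a Euclidean circle packing realising $K$ with exactly these boundary curvatures and this branch structure. Its conformal class, fed back through Theorem~\ref{Thm:circle_packing_eqs_general}, produces a positive real solution of the circle packing equations whose associated curvature ratios are precisely $[c_1:\cdots:c_n]$ and whose branch structure is $\beta$.

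For uniqueness, suppose two positive real solutions both have curvature ratio $[c_1:\cdots:c_n]$ and branch structure $\beta$. Each corresponds under Theorem~\ref{Thm:circle_packing_eqs_general} to a conformal class of Euclidean circle packings; within each such class I can pick the representative whose boundary curvatures are exactly $(c_1,\ldots,c_n)$, by rescaling the Euclidean plane (which scales all curvatures by a common positive factor). Both representatives then realise $K$ with the same boundary curvatures and the same branch structure, so the boundary value theorem forces them to be isometric, hence in the same conformal class and corresponding to the same solution.

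The hard part is ensuring compatibility of the bijection of Theorem~\ref{Thm:circle_packing_eqs_general} with both the boundary curvature data and the branch structure. Compatibility with curvature ratios is exactly Proposition~\ref{Prop:m_meaning}, and compatibility with branch structure uses the interpretation of the vertex equations as angle sum conditions, both already established earlier in the paper, so once those inputs are in hand the corollary is essentially a short dictionary argument between the algebraic and geometric sides.
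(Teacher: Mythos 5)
Your proposal is correct and matches the paper's argument: the paper proves this corollary by exactly the same combination of the boundary value theorem with Theorem~\ref{Thm:circle_packing_eqs_general}, reading off curvature ratios and branching from the solution via Proposition~\ref{Prop:m_meaning} and the angle-sum interpretation of the vertex equations. The paper in fact gives no further detail beyond this, so your write-up is, if anything, more explicit than the original.
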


Thus, for each fixed branch structure on $K$, the space of real positive solutions of the circle packing equations for $K$ is isomorphic to $\R_+ P^{n-1} \cong \R_+^{n-1}$, where $n$ is the number of boundary vertices of $K$. The overall space of real positive solutions is isomorphic to a disjoint union of copies of $\R_+^{n-1}$, one for each branch structure.

\subsection{ Redundancy and rigidity in circle packing equations}
\label{Sec:redundancy_rigidity}
 The above considerations imply
that the circle packing equations for a closed disc contain no redundancy; all the equations are independent. Indeed, letting $V=|V(K)|$, $E=|E(K)|$ and $F = |F(K)|$, the number of variables is $3F$, the number of vertex equations is $|V_{int}(K)| = V - n$, the number of edge equations is $|E_{int}(K)| = E - n$, and the number of triangle equations is $F$. So the number of variables minus equations is
\[
3F - \left( V + E + F - 2n \right)
= \left( -V + E - F \right) - 2E + 3F + 2n 
= n-1,
\]
the dimension of the space of solutions. The last equality uses Euler's formula and the fact that, since each face of $K$ is a triangle, $3F = 2E - n$.

When $K \cong S^2$, the ``fundamental theorem" or ``discrete uniformisation theorem" of circle packing theory (e.g. \cite[thm. 4.3]{StephensonKenneth2005Itcp}) tells us that $K$ has a unique univalent circle packing, up to conformal equivalence. This will provide us with a distinguished solution to the circle packing equations for $K$ (in their standard or unbranched versions) in these cases.

As it turns out, when $K \cong S^2$ a set of circle packing equations has redundancy. In particular, one edge equation from an edge $e_0$ of $\Delta_0$, the triangle equation for $\Delta_0$, and the vertex equations of $\Delta_0$, can be removed without changing the solution set; hence the following.
\begin{definition}
\label{Def:reduced_eqns_S2}
Suppose $K$ is a simplicial complex triangulating $S^2$, $\Delta_0$ is a triangle of $K$, and $e_0$ is an edge of $\Delta_0$. The \emph{reduced circle packing equations} for $(K, \Delta_0, e_0)$ consist of the vertex equations \refeqn{vertex_eqn_list} over all vertices disjoint from $\Delta_0$, the edge equations \refeqn{edge_eqn_list} over all edges other than $e_0$, the triangle equations \refeqn{triangle_eqn_list} over all triangles other than $\Delta_0$, and the sphere-closing equations \refeqn{sphere-closing_eqn_list}.
\end{definition}

\begin{proposition}
\label{Prop:circle_packing_reduction}
The positive solutions of the circle packing equations for $(K, \Delta_0)$ are identical to those of the reduced circle packing equations for $(K, \Delta_0, e_0)$.
\end{proposition}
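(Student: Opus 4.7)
The plan is to verify that any positive real solution of the reduced equations also satisfies the three families of omitted equations: the triangle equation for $\Delta_0$, the edge equation for $e_0$, and the vertex equations at the three vertices $v_1, v_2, v_3$ of $\Delta_0$. The reverse inclusion is immediate, so only this direction needs proof. The triangle equation for $\Delta_0$ is an immediate algebraic consequence of the sphere-closing equations: substituting $m_{\Delta_0, v_i} = -\sqrt{3}$ into \refeqn{triangle_eqn_list} yields $(-\sqrt{3})^3 = 3(-\sqrt{3})$.

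For the remaining two families, the strategy is to invoke the plane case of \refthm{circle_packing_eqs_general} applied to the triangulated disc $K_0 = K \setminus \Delta_0$. The reduced equations contain, as a subsystem, the circle packing equations for $K_0$ viewed as a disc (vertex equations at interior vertices, edge equations at interior edges, triangle equations at all faces), so they produce a Euclidean realisation $\Phi \colon K_0 \to \R^2$ with positive curvatures $\kappa_v$ and corner angles $\theta_{\Delta, v}$, related by the formula $m_{\Delta, v} = \sqrt{\kappa_u \kappa_v + \kappa_v \kappa_w + \kappa_w \kappa_u}/\kappa_v$ of \refprop{m_meaning}. For an edge $e_j$ of $\Delta_0$ with endpoints $u, w \in \{v_1, v_2, v_3\}$ and adjacent triangle $\Delta'_j \in K_0$, the sphere-closing equations collapse the corresponding edge equation to $m_{\Delta'_j, u} = m_{\Delta'_j, w}$, which by the formula above is equivalent to $\kappa_u = \kappa_w$ (both sides being positive, and the square-root factor common). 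The two retained edge equations of $\Delta_0$ therefore give two of the three pairwise equalities among $\kappa_{v_1}, \kappa_{v_2}, \kappa_{v_3}$, hence all three by transitivity, from which the omitted edge equation at $e_0$ follows as well.

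For the vertex equations, the equal boundary curvatures force $C_{v_1}, C_{v_2}, C_{v_3}$ to be congruent pairwise tangent circles, so $\Phi(v_1)\Phi(v_2)\Phi(v_3)$ is equilateral with interior angle $\pi/3$ at each vertex. A global angle count — the total Euclidean triangle angles in $K_0$ equal $(|F(K)|-1)\pi = (2V-5)\pi$ via Euler's formula $F = 2V - 4$ on $S^2$, while each interior vertex equation contributes a term $2\pi k_v$ with $k_v \in \Z_{\geq 1}$ — together with positivity of the three boundary fan angles $\alpha_i := \sum_{\Delta \in K_0,\, \Delta \ni v_i} \theta_{\Delta, v_i}$ pins down $k_v = 1$ at every interior vertex (so $\Phi$ is unbranched) and $\alpha_1 + \alpha_2 + \alpha_3 = \pi$. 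Since each $\alpha_i$ is the total sweep of a Euclidean fan of triangles whose bounding rays form the angle $\pi/3$ at $\Phi(v_i)$, we have $\alpha_i \equiv \pi/3 \pmod{2\pi}$, and $0 < \alpha_i < \pi$ then pins down $\alpha_i = \pi/3$ exactly. Finally, $m_{\Delta_0, v_i} = -\sqrt{3}$ corresponds via $e^{i\theta} = (m+i)/(m-i)$ to $\theta_{\Delta_0, v_i} = 5\pi/3$, so the total angle at $v_i$ is $\pi/3 + 5\pi/3 = 2\pi \in 2\pi\Z$, which is precisely the vertex equation at $v_i$ expressed as the condition $\sum_\Delta \arg(m_{\Delta, v_i} + i) \in \pi\Z$.

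The main obstacle will be this last step: rigorously pinning down $\alpha_i = \pi/3$ (rather than $\pi/3$ plus a positive multiple of $2\pi$), which depends on carefully combining positivity of all Euclidean angles, the Euler-formula angle count, and the orientation-preserving nature of $\Phi$ at the boundary. The remainder of the argument is essentially algebraic substitution together with direct applications of the planar case of \refthm{circle_packing_eqs_general} and \refprop{m_meaning}.
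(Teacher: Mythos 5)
Your proposal is correct and follows essentially the same route as the paper: restrict to the disc subsystem for $K_0$ to obtain a Euclidean realisation via \refthm{circle_packing_eqs_general} and \refprop{m_meaning}, convert the two retained edge equations of $\Delta_0$ into equalities of boundary curvatures to recover the third, and then read the vertex equations at $v_0,v_1,v_2$ off the resulting equilateral boundary triangle. The only divergence is your global Euler-formula angle count pinning down each boundary fan angle as exactly $\pi/3$ and the packing as unbranched; the paper simply asserts the fan angles are $\pi/3$, and for the vertex equation (which only requires the total angle at each $v_j$ to be a multiple of $2\pi$) the value mod $2\pi$ already suffices, so your extra step is harmless and if anything makes the congruence class unambiguous.
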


We also observe rigidity in our equations in the spherical case. Thus, regarding labels in $\Delta_0$ as constants, the reduced circle packing equations for $(K, \Delta_0, e_0)$ have $3F-3$ variables, $V-3$ vertex equations, $E-1$ edge equations, and $F-1$ triangle equations. So the number of variables minus equations is
\[
\left( 3F-3 \right) - \left( V+E+F - 5 \right)
= -V+E-F + (3F-2E) +2
= 0,
\]
where the last equality uses Euler's formula $V-E+F=2$ and $3F=2E$.

When $K \cong T^2$, the fundamental theorem again applies and we again observe rigidity in our equations. There is again redundancy in the equations, and we can remove an edge and vertex equation on the boundary of a fundamental domain as follows. (We discuss fundamental domains in more detail in \refsec{torus_eqns_satisfied}.)
\begin{definition}
\label{Def:reduced_eqns_torus}
Suppose $K$ triangulates $T^2$, with universal cover a simplicial complex triangulating $\R^2$. Let $\lambda, \mu$ be oriented closed normal curves forming a basis for $H_1 (K)$. Let $v_0 \in V(K)$. Let $e_0 \in E(K)$ be an edge on the boundary of a fundamental domain complex for $K$. The \emph{reduced circle packing equations} for $(K, \lambda, \mu, e_0, v_0)$ consist of the vertex equations \refeqn{vertex_eqn_list} over all vertices other than $v_0$, the edge equations \refeqn{edge_eqn_list} over all edges other than $e_0$, the triangle equations \refeqn{triangle_eqn_list}, and the holonomy equations \refeqn{holonomy_eqns_1} and \refeqn{holonomy_eqns_2}.
\end{definition}

\begin{proposition}
\label{Prop:circle_packing_reduction_torus}
The positive solutions of the circle packing equations for $(K, \lambda, \mu)$ are identical to those of the reduced circle packing equations for $(K, \lambda, \mu, e_0, v_0)$.
\end{proposition}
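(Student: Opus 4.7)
The inclusion of positive solutions of the full system into those of the reduced system is immediate, since the reduced system is obtained by dropping two equations. The substantive direction is the reverse: given a positive real solution $\{m_{\Delta,\widehat{v}}\}$ of the reduced equations, we must verify that it also satisfies the two dropped equations, namely the vertex equation at $v_0$ and the edge equation at $e_0$.

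For the vertex equation at $v_0$, the plan is to use the angle-sum reformulation to be established in \refsec{angles_cotangents}. For positive $m$, setting $\theta_{\Delta,\widehat{v}} = 2\operatorname{arccot}(m_{\Delta,\widehat{v}}) \in (0,\pi)$, each triangle equation is equivalent to the three angles in that triangle summing to $\pi$, and each polynomial vertex equation at a vertex $v$ is equivalent to $\sum_j \theta_{\Delta_j,v} \in 2\pi\Z$ (using $\arg(m+i) = \theta/2$ for $m > 0$). Summing the $F$ triangle equations yields a total angle of $\pi F$; subtracting the $V-1$ vertex angle-sums supplied by the reduced equations leaves the angle sum at $v_0$ equal to $\pi F - 2\pi K$ for some $K \in \Z$. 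For a torus triangulation with simplicial universal cover, the Euler relation $V - E + F = 0$ together with $2E = 3F$ forces $F$ even, so $\pi F \in 2\pi\Z$, and the angle sum at $v_0$ is a multiple of $2\pi$; this is exactly the polynomial vertex equation at $v_0$.

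For the edge equation at $e_0$, the plan is to develop a fundamental domain into $\R^2$ and apply the holonomy equations. Fix a fundamental domain $\mathcal{F}$ for $K$ in the universal cover with $e_0$ on $\partial \mathcal{F}$. Using the triangle equations (making each triangle Euclidean) and the edge equations on edges interior to $\mathcal{F}$ (gluing adjacent triangles), one develops the triangles of $\mathcal{F}$ into a simply connected region of $\R^2$. The edge $e_0$ has two lifts on $\partial \mathcal{F}$, identified by a deck transformation $\gamma$ expressible as a word in the generators associated to $\lambda$ and $\mu$; the missing edge equation at $e_0$ is equivalent to the assertion that $\gamma$ acts on the developed image as a pure translation, so that the two developed lifts of $e_0$ (with their adjacent triangles) agree under $\gamma$-identification. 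The main technical step is to recognise each factor $(m_{\Delta_j^\lambda, c_j^\lambda} + i\delta_j^\lambda)$ appearing in the $\lambda$-holonomy equation as recording, up to a positive scalar, the multiplicative rotational increment of the developed triangle as $\lambda$ traverses its $j$-th normal arc; the $\lambda$-holonomy equation then asserts that the total rotation along $\lambda$ lies in $\pi\Z$, and similarly for $\mu$. Combining the two forces the rotational part of $\gamma$ to be trivial, yielding the edge equation at $e_0$. The principal obstacle is making the correspondence between the $(m + i\delta)$ factors and incremental rotations rigorous; the spinorial framework of \cite{MZ25}, in which $(m+i)$ arises as a spinorial inner product whose argument is half the relevant angle, is expected to provide the cleanest setting for this bookkeeping.
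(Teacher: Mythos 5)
Your treatment of the vertex equation at $v_0$ is correct and is exactly the paper's argument: the triangle equations give a total angle of $\pi F$ over all corners, $3F=2E$ forces $F$ even (you do not even need Euler's formula for this), and subtracting the $V-1$ known vertex angle-sums leaves a multiple of $2\pi$ at $v_0$.

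The argument for the edge equation at $e_0$ has a genuine gap. The edge equation at an edge with endpoints $v_1,v_2$ and adjacent triangles $\Delta_1,\Delta_2$ is, via \reflem{edge_eqn_fact}, the statement that the ratio of Soddy-circle curvatures $\kappa_{v_1}/\kappa_{v_2}$ computed inside $\Delta_1$ equals the same ratio computed inside $\Delta_2$; it is a \emph{scaling} condition on the two developed triangles, not a rotational one. The holonomy equations, by contrast, constrain only the angles $\theta_c$ (they say a signed sum of angles along $\lambda$ or $\mu$ is a multiple of $2\pi$), i.e.\ only the rotational part of the developed holonomy. So the chain ``holonomy equations $\Rightarrow$ rotational part of $\gamma$ trivial $\Rightarrow$ edge equation at $e_0$'' breaks at the second implication: two developed lifts of $e_0$ can be parallel while their adjacent triangles have incompatible Soddy-radius ratios. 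Your claimed equivalence between the missing edge equation and ``$\gamma$ acts as a pure translation'' is also not right in either direction: the edge equation would already follow if the two lifts were related by \emph{any} similarity (a dilation is harmless to curvature ratios), and conversely even the full system only yields pure translation after combining parallelism with \emph{all} the boundary edge equations and a parallelogram argument -- that is the content of the subsequent proof of the main theorem, not of this proposition.

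What actually closes the gap, and what the paper does, is to use the \emph{other} edge equations along the boundary of the fundamental domain $K_0$. Realising $K_0$ in $\R^2$ from $\mathcal{E}_0$, each boundary edge equation becomes an equality of curvature ratios between corresponding vertices on the two identified copies of $\mathfrak{l}$ (resp.\ $\mathfrak{m}$), giving two chains of equal ratios along $\mathfrak{l}^B/\mathfrak{l}^T$ and $\mathfrak{m}^L/\mathfrak{m}^R$. The key observation is the corner identity: the equality of the first and last terms of one chain, $\kappa^{BL}/\kappa^{TL}=\kappa^{BR}/\kappa^{TR}$, is algebraically the same statement as the equality of the first and last terms of the other, $\kappa^{BL}/\kappa^{BR}=\kappa^{TL}/\kappa^{TR}$. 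Since only one link of one chain is missing, the intact chain supplies the endpoint equality of the broken chain, which forces its missing link. No appeal to the holonomy equations is needed for this step.
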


The reduced circle packing equations for $(K, \lambda, \mu, e_0, v_0)$ have $3F$ variables, $V-1$ vertex equations, $E-1$ edge equations, $F$ triangle equations, and $2$ holonomy equations, so the number of variables minus equations is
\[
3F - (V+E+F) 
= -V+E-F + (3F-2E) = 0,
\]
again using $3F=2E$ and $V-E+F=0$.

The circle packing equations are analogous to the Thurston gluing equations in several ways. Given an ideal triangulation of a hyperbolic 3-manifold, a cusp triangulation is a finite simplicial complex triangulating a torus. Thurston associates a variable to each corner of each triangle (e.g. $z_1, z_2, z_3$ and $w_1, w_2, w_3$ in the figure-8 knot complement example of \cite[ch. 4]{Thurston_notes}). The three variables in a triangle satisfy relations, and the variables around a vertex satisfy gluing equations multiplying to $1$, with an alternative ``unbranched" logarithmic formulation. By Mostow rigidity, there is a unique solution. Solutions to the circle packing equations are similarly unique and rigid once boundary and branching conditions are specified. Similar relationships are also discussed in Thurston's notes \cite[ch. 13]{Thurston_notes}.

\subsection{Examples}

\subsubsection{Descartes configuration}

Consider the standard Descartes 3-flower configuration. Let $K$ be the 3-flower complex, with corners labelled as in \reffig{CirclePackEqsExample}. (Our variables skip $i$ to avoid confusion with $\sqrt{-1}$.)
The circle packing equations are then

\begin{figure}[!h]
 \centering
 \begin{tikzpicture}[scale=2]
 \coordinate (A) at (0,1.1);
 \coordinate (B) at (-1.3,-0.9); 
 \coordinate (C) at (1.6,-0.9);
 \coordinate (O) at (0,0);
 \draw[thick] (O) -- (A) node[above] {$1$};
 \draw[thick] (O) -- (B) node[left] {$2$};
 \draw[thick] (O) -- (C) node[right] {$3$};
 \draw[thick] (A) -- (B);
 \draw[thick] (B) -- (C);
 \draw[thick] (C) -- (A);
 \pic [draw, ->, "$a$", draw=none, angle eccentricity=0.7] {angle = A--O--B};
 \pic [draw, ->, "$b$", draw=none, angle eccentricity=1.2] {angle = B--A--O};
 \pic [draw, ->, "$c$", draw=none, angle eccentricity=1.5] {angle = O--B--A};
 \pic [draw, ->, "$d$", draw=none, angle eccentricity=0.7] {angle = B--O--C};
 \pic [draw, ->, "$e$", draw=none, angle eccentricity=1.2] {angle = C--B--O};
 \pic [draw, ->, "$f$", draw=none, angle eccentricity=2] {angle = O--C--B};
 \pic [draw, ->, "$g$", draw=none, angle eccentricity=0.64] {angle = C--O--A};
 \pic [draw, ->, "$h$", draw=none, angle eccentricity=1.95] {angle = A--C--O};
 \pic [draw, ->, "$j$", draw=none, angle eccentricity=1.2] {angle = O--A--C};
 \end{tikzpicture}
 \caption{Complex of a 3-flower.}
 \label{Fig:CirclePackEqsExample}
\end{figure}

\[
\begin{array}{c|c|c}
\textbf{Triangle equations} & \textbf{Edge equations} & \textbf{Vertex equations} \\
\hline
\begin{array}{rcl}
abc &=& a+b+c \\
def &=& d+e+f \\
ghj &=& g+h+j
\end{array}
&
\begin{array}{rcl}
ae &=& cd \\
aj &=& bg \\
dh &=& fg 
\end{array}
&
\begin{array}{rcl}
1 &=& ad+dg+ga,
\end{array}
\end{array}
\]
This system of $7$ equations in $9$ variables has a 2-parameter set of solutions. Indeed, we can set $a,d$ freely, and then the other corner variables can be solved as rational functions of $a$ and $d$. (Other pairs of variables can be set freely too: our choice of $a$ and $d$ is quite arbitrary.) 

We obtain
\begin{gather*}
b = \frac{a(a+d)}{1-a-d+a^2}, \quad
c = \frac{a}{a+d-1}, \quad
e = \frac{d}{a+d-1}, \quad
f = \frac{d(a+d)}{1-a-d+d^2} \\
g = \frac{1-ad}{a+d}, \quad
h = \frac{1-ad}{1-a-d+d^2}, \quad
j = \frac{1-ad}{1-a-d+a^2}.
\end{gather*}
(In solving, we made a choice of positive solution: successively solving for $g,e,f,h,j,b$ and then $c$, we obtain $c = a/(-1 \pm (a+d))$, of which one choice is manifestly negative and discarded.)

This gives an explicit parametrisation of the space of circle packings of $K$. The relative curvatures can be recovered, for example as
\[
\frac{\kappa_1}{\kappa_\infty} = \frac{a}{b} = \frac{1-a-d+a^2}{a+d}, \quad
\frac{\kappa_2}{\kappa_\infty} = \frac{a}{c} = a+d-1, \quad
\frac{\kappa_3}{\kappa_\infty} = \frac{d}{f} = \frac{1-a-d+d^2}{a+d},
\]
from which one can verify the classic Descartes equation.

\subsubsection{Tetrahedron}

Suppose $K$ is a tetrahedron. Such a $K$ is a finite simplicial complex, with $4$ triangles, triangulating a sphere. Removing any triangle $\Delta_0$ yields the Descartes 3-flower complex above, which we now denote $K_0$, adopting the same labels. We add labels in the corners of $\Delta_0$ and set them to $-\sqrt{3}$ by the sphere-closing equations. Letting $e_0$ be the edge from $2$ to $3$ in \reffig{CirclePackEqsExample}, the reduced circle packing equations for $(K, \Delta_0, e_0)$ are precisely those of the previous example, together with the two further edge equations
\[
-b \sqrt{3} = - c \sqrt{3}
\quad \text{and} \quad
-h \sqrt{3} = - j \sqrt{3},
\]
i.e. $b=c$ and $h=j$. Following the parametrisation of solutions for the Descartes configuration above, and recalling we only consider positive solutions, this yields $a=d = 1/\sqrt{3}$, leading to the unique solution 
\[
a=d=g=\frac{1}{\sqrt{3}} = \cot \frac{\pi}{3}, \quad
b=c=e=f=h=j = 2+\sqrt{3} = \cot \frac{\pi}{12},
\]
corresponding to \reffig{CirclePackEqsExample} being an equilateral triangle split into three congruent triangles at its centroid.

Applying stereographic projection yields $4$ mutually tangent circles on the sphere in a regular tetrahedral arrangement. Any circle packing realising $K$ on $S^2$ is related to this one by a conformal automorphism of $S^2$.

\subsubsection{Standard torus}
\label{Sec:standard_torus}
Let $K$ be the standard triangulation of a torus as in \reffig{CirclePackEqsTorusExample}, with edge identifications indicated, as well as oriented simple closed curves $\lambda, \mu$ forming a basis for $H_1 (K)$. We denote vertices of unglued triangles, and hence corners of triangles, by $1$--$4$ as shown. Although $K$ is not a simplicial complex, its universal cover is a simplicial complex triangulating $\R^2$, hence it satisfies the hypotheses of \refthm{circle_packing_eqs_general}.

\begin{figure}[!h]
 \centering
 \begin{tikzpicture}[scale=2]
 \coordinate (A) at (0,0);
 \coordinate (B) at (2,0); 
 \coordinate (C) at (2,2);
 \coordinate (D) at (0,2);
				\begin{scope}[decoration={
						markings,
						mark=at position 0.5 with {\arrow{>}}}
						] 
						\draw[thick, postaction={decorate}] (A) -- (B) node[below right] {$2$};
						\draw[thick, postaction={decorate}] (D) node[above left] {$4$} -- (C);
				\end{scope}				
				\begin{scope}[decoration={
						markings,
						mark=at position 0.5 with {\arrow{>>}}}
						] 
						\draw[thick, postaction={decorate}] (B) -- (C) node[above right] {$3$};
						\draw[thick, postaction={decorate}] (A) node[below left] {$1$} -- (D);
				\end{scope}
				\draw[thick] (A) -- (C);
 \pic [draw, ->, "$a$", draw=none, angle eccentricity=1.2] {angle = B--A--C};
 \pic [draw, ->, "$b$", draw=none, angle eccentricity=1] {angle = C--B--A};
 \pic [draw, ->, "$c$", draw=none, angle eccentricity=1.5] {angle = A--C--B};
 \pic [draw, ->, "$d$", draw=none, angle eccentricity=1.5] {angle = C--A--D};
 \pic [draw, ->, "$e$", draw=none, angle eccentricity=1.5] {angle = D--C--A};
 \pic [draw, ->, "$f$", draw=none, angle eccentricity=1] {angle = A--D--C};
				\draw[ultra thick, ->] (-0.2,1.4) -- (2.2,1.4) node [right] {$\lambda$};
				\draw[ultra thick, ->] (0.6,-0.2) -- (0.6,2.2) node [above] {$\mu$};
 \end{tikzpicture}
 \caption{Complex of a torus.}
 \label{Fig:CirclePackEqsTorusExample}
\end{figure}

The circle packing equations are then
\[
\begin{array}{c|c|c}
\textbf{Triangle equations} & \textbf{Edge equations} & \textbf{Holonomy equations} \\
\hline
\begin{array}{rcl}
abc &=& a+b+c \\
def &=& d+e+f \\
\end{array}
&
\begin{array}{rcl}
ae &=& cd \\
ae &=& bf \\
cd &=& bf 
\end{array}
&
\begin{array}{rcl}
c-d &=& 0 \\
a-e &=& 0
\end{array}
\end{array}
\]
as well as the vertex equation
\[
\left( abcde+\cdots 
+bcdef \right)
- \left( abc+\cdots+def \right)
+ \left( a+b+c+d+e+f \right) = 0.
\]
Here the first bracket contains all $\binom{6}{5}=6$ products of $5$ distinct variables from $\{a,b,c,d,e,f\}$, and the second bracket contains all $\binom{6}{3} = 20$ products of $3$ distinct variables from the same set.
In the holonomy equation for $\lambda$ we have $n_\lambda = 2$, Choosing a basepoint on the vertical side, $\lambda_1$ cuts off corner $1$ clockwise, which has label $d$, so $c_1^\lambda = 1$, $\delta_1^\lambda = -1$, and $m_{\Delta_1^\lambda, c_1^\lambda} = d$. Similarly, $\lambda_2$ cuts off corner $3$ anticlockwise, which has label $c$, so $c_2^\lambda = 3$, $\delta_2^\lambda = 1$ and $m_{\Delta_2^\lambda, c_2^\lambda} = c$. Thus the holonomy equation for $\lambda$ is $c-d=0$, and similarly the holonomy equation for $\mu$ is $a-e=0$.

This system of $8$ equations in $6$ variables has a unique solution in positive reals, with any one edge equation and the one vertex equation being redundant. Indeed, using the equalities $c=d$ and $a=e$ from the holonomy equations, the edge equations become
\[
a^2 = c^2, \quad
a^2 = bf, \quad
c^2 = bf,
\]
of which any two imply the third. Since all variables must be positive, these imply $a=c$. This reduces us to $3$ variables $a(=c=d=e),b,f$, and the triangle equations together with the remaining edge equation become
\[
a^2 = bf, \quad
a^2 b = 2a+b, \quad
a^2 f = 2a+f.
\]
These equations straightforwardly have unique positive solution $a=b=f=\sqrt{3}$. It is easily verified that the vertex equation is then satisfied, hence redundant.

All variables equal to $\sqrt{3}$ corresponds to all angles equal to $\pi/3$, and we have a familiar hexagonal tessellation of equilateral triangles with their Soddy circles.

\subsection{Related work}
Beyond the relations already mentioned between circle packing and work of Thurston \cite{Thurston_notes} on hyperbolic manifolds, which inspired the present paper, we mention some related work.

Much of the development in circle packing theory in recent times has been variational or transcendental in nature, such as in work of Colin de Verdi\`{e}re \cite{deVerdiereCercles}, Br\"{a}gger \cite{BraggerKreispackungen}, Rivin \cite{RivinEuclidStructure}, Leibon \cite{LeibonDelaunayTriang}, Bobenko--Springborn \cite{BobenkoVariationalPrinciples}, Stephenson \cite{StephensonKenneth2005Itcp}, and many others.
Indeed, this paper was also partly inspired by the desire to complement such variational approaches.

However, algebraic approaches have been taken in prior work in related areas; we mention a few examples.  Bobenko--Suris in \cite{BobenkoIntegrableSystems} consider algebraic relations arising from isoradial bipartite quad-graphs in $\mathbb{C}$ and their integrability. The discrete isothermic and minimal surfaces studied by Bobenko--Hoffmann--Springborn in \cite{BobenkoMinimalSurfaces} involve algebraic equations, and equations for spherical circle patterns, 
of a somewhat similar flavour. The linear and nonlinear theories of discrete analytic functions studied by Bobenko--Mercat--Suris in \cite{BobenkoLinearNonlinear} also involve algebraic equations in  a similar spirit, especially integrability conditions such as their (12).
An algebraic approach is also taken in work of Kenyon--Lam--Ramassamy--Russkikh in \cite{KenyonDimerCircle}, establishing a correspondence between circle patterns and face-weighted bipartite planar graphs, under which the cluster mutations (spider moves) of a dimer model correspond to applications of Miquel's six-circles theorem. 

\subsection{Structure of this paper}

In \refsec{preliminaries} we discuss preliminary results from classical Euclidean triangle geometry, and the algebraic results we need relating angles and cotangents. In \refsec{Descartes} we consider generalised Descartes circle theorems and prove \refthm{gen_Descartes}, using elementary and spinorial methods. In \refsec{circle_packings_solve_equations} we show that circle packings satisfy circle packing equations, and in \refsec{solutions_to_packings} we show the converse, that solutions to the circle packing equations describe circle packings, proving 
 the main \refthm{circle_packing_eqs_general}, including its spherical version \refthm{circle_packing_eqs_spherical} and unbranched version \refthm{circle_packing_eqs_general_unbranched}. Our method of proof is constructive and quite explicit, showing how certain variables and equations contribute certain geometric information. Along the way we prove the geometric interpretation of the variables of \refprop{m_meaning}, and the equivalence of reduced systems of equations in \refprop{circle_packing_reduction} and \refprop{circle_packing_reduction_torus}.

\subsection{Acknowledgements}

The authors are supported by Australian Research Council grant DP210103136. They thank Ian Agol for sharing his observations and suggestions, Kenneth Stephenson for helpful comments, and Feng Luo for useful suggestions.

\section{Preliminaries}
\label{Sec:preliminaries}

\subsection{Euclidean triangle geometry}
\label{Sec:Euclidean_triangles}

Given a Euclidean triangle $ABC$, its \emph{Soddy circles} are the unique mutually externally tangent circles centred at $A,B,C$. Denoting their radii by $r_A, r_B, r_C$ and their curvatures by $\kappa_A, \kappa_B, \kappa_C$ respectively, the side lengths of $ABC$ are $r_A + r_B$, $r_B + r_C$ and $r_C + r_A$. Let the internal angles at $A,B,C$ be $\theta_A, \theta_B, \theta_C$ respectively. See \reffig{SoddyCircleExample}. Further, let $m_A, m_B, m_C$ be the parameters given by \refeqn{ms_in_terms_of_ks}, so that 
\begin{equation}
\label{Eqn:mX_nice}
m_A = \frac{\sqrt{L_\Delta}}{\kappa_A}, \quad
m_B = \frac{\sqrt{L_\Delta}}{\kappa_B}, \quad
m_C = \frac{\sqrt{L_\Delta}}{\kappa_C}
\quad \text{where} \quad
L_\Delta = \kappa_A \kappa_B + \kappa_B \kappa_C + \kappa_C \kappa_A.
\end{equation}

\begin{figure}
 \centering
\begin{tikzpicture}[scale=1.1]
 \coordinate (A) at (0,1);
 \coordinate (B) at (-1.7,-1.1);
 \coordinate (C) at (2,-1);
 \draw[thick] (A)--(B)--(C)--(A);
 \draw[thick] (A) circle (0.91) node[above] {$A$};
 \draw[thick] (B) circle (1.78) node[left] {$B$};
 \draw[thick] (C) circle (1.92) node[right] {$C$};
 \pic [draw, ->, "$\theta_A$", angle eccentricity=1.38] {angle = B--A--C};
 \pic [draw, ->, "$\theta_B$", angle eccentricity=1.5] {angle = C--B--A};
 \pic [draw, ->, "$\theta_C$", angle eccentricity=1.5] {angle = A--C--B};
 \node[left] at (-0.15,0.85) (rx1) {$r_A$};
 \node[right] at (0.15,0.85) (rx2) {$r_A$};
 \node[below] at (-1,-1) (ry1) {$r_B$};
 \node[left] at (-1,0) (ry2) {$r_B$};
 \node[below] at (1,-1) (rz1) {$r_C$};
 \node[right] at (1,0) (rz2) {$r_C$};
\end{tikzpicture}
 \caption{A triangle and its Soddy circles.}
 \label{Fig:SoddyCircleExample}
\end{figure}

Essentially all the arguments of this paper follow from the following observation, pointed out to us by Ian Agol.
\begin{lem}
\[
\cos \theta_A = \frac{m_A^2 - 1}{m_A^2 + 1}
\]
\end{lem}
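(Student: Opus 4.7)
The plan is to establish first that $m_A = \cot(\theta_A/2)$ by a direct computation using the half-angle tangent formula for a triangle, and then to convert this identity into the claimed formula by the standard trigonometric double-angle identity.

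First I would write down the Euclidean data of triangle $ABC$ in the form that makes Heron-type formulas transparent. The three side lengths are $a = r_B + r_C$, $b = r_C + r_A$, $c = r_A + r_B$, so the semiperimeter is $s = r_A + r_B + r_C$ and satisfies the very clean identities $s - a = r_A$, $s - b = r_B$, $s - c = r_C$. The classical half-angle formula for a triangle states that
\[
\tan\!\left(\frac{\theta_A}{2}\right) = \sqrt{\frac{(s-b)(s-c)}{s(s-a)}},
\]
which in our notation collapses to $\tan(\theta_A/2) = \sqrt{r_B r_C / (r_A s)}$, hence $\cot(\theta_A/2) = \sqrt{r_A s/(r_B r_C)}$.

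Next I would rewrite this in curvatures via $r_X = 1/\kappa_X$. A short manipulation gives
\[
\cot^2\!\left(\frac{\theta_A}{2}\right) = \frac{r_A(r_A+r_B+r_C)}{r_B r_C} = \frac{\kappa_B\kappa_C}{\kappa_A^2}\left(1 + \frac{\kappa_A}{\kappa_B} + \frac{\kappa_A}{\kappa_C}\right) = \frac{\kappa_A\kappa_B+\kappa_B\kappa_C+\kappa_C\kappa_A}{\kappa_A^2} = \frac{L_\Delta}{\kappa_A^2},
\]
so that $\cot(\theta_A/2) = \sqrt{L_\Delta}/\kappa_A = m_A$, matching the definition in \refeqn{mX_nice}. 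Since $\theta_A \in (0,\pi)$, there is no sign ambiguity.

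Finally I would invoke the standard identity
\[
\cos\theta = \frac{1-\tan^2(\theta/2)}{1+\tan^2(\theta/2)} = \frac{\cot^2(\theta/2)-1}{\cot^2(\theta/2)+1},
\]
and substitute $\cot(\theta_A/2) = m_A$ to obtain the claim. There is essentially no obstacle here: the only thing to verify carefully is the passage from radii to curvatures, which is purely algebraic, and the applicability of the half-angle formula, which is automatic since $\theta_A$ is an angle of a genuine Euclidean triangle. The identity $m_A = \cot(\theta_A/2)$ obtained along the way is in fact the key content that will power \refprop{m_meaning} and the geometric interpretations that follow.
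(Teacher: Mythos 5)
Your proof is correct, but it runs in the opposite logical direction to the paper's. The paper proves this lemma directly from the cosine rule: it expands $\cos\theta_A = \bigl((r_A+r_B)^2+(r_A+r_C)^2-(r_B+r_C)^2\bigr)/\bigl(2(r_A+r_B)(r_A+r_C)\bigr)$, simplifies to $(r_A^2+r_Ar_B+r_Ar_C-r_Br_C)/(r_A^2+r_Ar_B+r_Ar_C+r_Br_C)$, converts to curvatures, and reads off $(L_\Delta-\kappa_A^2)/(L_\Delta+\kappa_A^2) = (m_A^2-1)/(m_A^2+1)$. The identity $m_A=\cot(\theta_A/2)$ is then \emph{deduced} from this lemma later, via \reflem{exp_Agol} and \reflem{edge_eqn_fact}. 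You instead take the classical half-angle formula $\tan(\theta_A/2)=\sqrt{(s-b)(s-c)/(s(s-a))}$ as your starting point, use the clean identities $s-a=r_A$ etc.\ to get $\cot(\theta_A/2)=\sqrt{L_\Delta}/\kappa_A=m_A$ first, and then recover the cosine formula from the double-angle identity. Both are sound; your computations (the passage from radii to curvatures, the positivity argument fixing the sign of the square root, and the final substitution) all check out. What the paper's route buys is self-containedness — it needs only the cosine rule, and the half-angle interpretation of $m_A$ falls out as a corollary. What your route buys is that it foregrounds the geometrically central fact $m_A=\cot(\theta_A/2)$ immediately; indeed the paper itself remarks, in the paragraph following \reflem{edge_eqn_fact}, that exactly this derivation via $s-a=r_A$, $s-b=r_B$, $s-c=r_C$ and the classical half-angle identities is available. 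The only mild caveat is that you are invoking the half-angle formula as a black box, whereas the paper derives everything from the cosine rule; if one insists on first principles, your argument implicitly contains the paper's computation inside the proof of that classical identity.
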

Of course, the $A$ can be replaced with $B$ or $C$.

\begin{proof}
By the cosine rule, using $\kappa_\bullet = 1/r_\bullet$, and noting
$m_A^2 \pm 1 = (L_\Delta \pm \kappa_A^2)/\kappa_A^2$, we have
\begin{align*}
\cos \theta_A &= \frac{ (r_A + r_B)^2 + (r_{A}+r_C)^2 - (r_{B}+r_C)^2}{2(r_{A}+r_B)(r_A+r_C)} 
= \frac{ r_A^2 + r_A r_B + r_A r_C - r_B r_C}{r_A^2 + r_A r_B + r_A r_C + r_B r_C} \\
&= \frac{ \kappa_{B} \kappa_C + \kappa_{A} \kappa_C + \kappa_A \kappa_B - \kappa_A^2}{\kappa_{B} \kappa_C + \kappa_{A} \kappa_C + \kappa_A \kappa_B + \kappa_A^2} 
= \frac{L_\Delta - \kappa_A^2}{L_\Delta + \kappa_A^2}
= \frac{m_A^2-1}{m_A^2+1}.
\end{align*}
\end{proof}

Various other calculations then follow straightforwardly.
\begin{lem} \
\label{Lem:exp_Agol}
\begin{enumerate}
\item 
$\sin \theta_A = \frac{2m_A}{m_A^2 + 1}$
\item 
$e^{i \theta_A} = \frac{m_A + i}{m_A - i}$
\item 
$\arg (m_A + i) = \theta_A/2$.
\end{enumerate}
\end{lem}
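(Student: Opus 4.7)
The plan is to derive all three statements directly from the preceding lemma $\cos \theta_A = \frac{m_A^2 - 1}{m_A^2 + 1}$, together with the observations that $m_A > 0$ (since curvatures are positive, so $m_A = \sqrt{L_\Delta}/\kappa_A > 0$) and that $\theta_A \in (0,\pi)$ as an interior angle of a Euclidean triangle.

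For part (1), I would apply the Pythagorean identity. A short computation gives
\[
\sin^2 \theta_A = 1 - \left( \frac{m_A^2-1}{m_A^2+1} \right)^2 = \frac{(m_A^2+1)^2 - (m_A^2-1)^2}{(m_A^2+1)^2} = \frac{4 m_A^2}{(m_A^2+1)^2}.
\]
Since $\theta_A \in (0,\pi)$ forces $\sin \theta_A > 0$, and $m_A > 0$, taking the positive square root yields $\sin \theta_A = 2 m_A / (m_A^2 + 1)$.

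For part (2), I would assemble $e^{i\theta_A} = \cos \theta_A + i \sin \theta_A$ using the preceding lemma and part (1), and recognize the numerator as a perfect square:
\[
e^{i \theta_A} = \frac{m_A^2 - 1 + 2 i m_A}{m_A^2 + 1} = \frac{(m_A + i)^2}{(m_A+i)(m_A-i)} = \frac{m_A + i}{m_A - i},
\]
using $(m_A+i)(m_A-i) = m_A^2 + 1$.

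For part (3), I would use that $m_A - i = \overline{m_A + i}$, so writing $\alpha = \arg(m_A + i)$ gives $\arg(m_A - i) = -\alpha$, and hence $\arg\bigl((m_A+i)/(m_A-i)\bigr) = 2\alpha$ (modulo $2\pi$). By part (2), this equals $\theta_A$ modulo $2\pi$. Since $m_A > 0$, we have $\alpha \in (0, \pi/2)$, so $2\alpha \in (0, \pi)$; combined with $\theta_A \in (0,\pi)$, the congruence is in fact an equality, so $\alpha = \theta_A/2$. None of these steps presents a genuine obstacle, as the work is essentially algebraic manipulation and the careful tracking of signs and principal arguments — the only subtle point is justifying that the mod $2\pi$ ambiguity in part (3) collapses, which is precisely where the positivity of $m_A$ and the range of triangle angles is used.
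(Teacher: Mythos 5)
Your proposal is correct and follows essentially the same route as the paper: the Pythagorean identity with the sign determined by $m_A>0$ and $\theta_A\in(0,\pi)$ for (1), the perfect-square factorisation $(m_A+i)^2/((m_A+i)(m_A-i))$ for (2), and the conjugate-argument observation with the range restriction collapsing the mod $2\pi$ ambiguity for (3). Your treatment of the last point is, if anything, slightly more explicit than the paper's.
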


\begin{proof}
For (1) we calculate
\[
\sin^2 \theta_A = 1 - \cos^2 \theta_A
= 1 - \frac{ \left( m_A^2-1 \right)^2}{ \left( m_A^2+1 \right)^2}
= \frac{4m_A^2}{\left( m_A^2 + 1 \right)^2}.
\]
Since $m_A$ and $\sin \theta_A$ are both positive, taking a square root gives the desired equality. Then
\[
e^{i \theta_A} = \cos \theta_A + i \sin \theta_A 
= \frac{m_A^2-1}{m_A^2 + 1} + \frac{2 i m_A }{m_A^2 + 1} 
= \frac{ \left( m_A + i \right)^2}{\left( m_A - i \right) \left( m_A + i \right)}
= \frac{m_A + i }{m_A - i},
\]
giving (2). As $m_A + i$ and $m_A - i$ are conjugate, $\arg(m_A - i) = -\arg(m_A + i)$. The argument $\theta_A$ of $e^{i\theta_A}$ is thus $2 \arg (m_A + i)$ mod $2\pi$, and as $m_A$ is positive and $\theta_A \in (0, \pi)$, we have (3). 
\end{proof}

\begin{lem}
\label{Lem:edge_eqn_fact}
\[
\cot \left( \frac{\theta_A}{2} \right) = m_A
\quad \text{and} \quad
\frac{ \cot \left( \theta_A/2 \right)}{ \cot \left( \theta_B / 2 \right) }
= \frac{m_A}{m_B} 
= \frac{\kappa_B}{\kappa_A}
\]
\end{lem}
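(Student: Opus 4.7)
The plan is to read off both equalities directly from results already in hand, with essentially no new content needed.

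For the first equality, I would invoke \reflem{exp_Agol}(3), which states that $\arg(m_A + i) = \theta_A/2$. Since the complex number $m_A + i$ has real part $m_A$ and imaginary part $1$, its argument $\theta_A/2$ satisfies $\tan(\theta_A/2) = 1/m_A$, and hence $\cot(\theta_A/2) = m_A$ as required. (Alternatively, one can substitute the formulas $\cos\theta_A = (m_A^2-1)/(m_A^2+1)$ and $\sin\theta_A = 2m_A/(m_A^2+1)$ from the preceding lemma and \reflem{exp_Agol}(1) into the half-angle identity $\cot(\theta_A/2) = (1+\cos\theta_A)/\sin\theta_A$, which collapses to $2m_A^2/(2m_A) = m_A$.)

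For the second equality, the ratio $\cot(\theta_A/2)/\cot(\theta_B/2) = m_A/m_B$ is then immediate from the first part applied at the corners $A$ and $B$. The remaining identity $m_A/m_B = \kappa_B/\kappa_A$ is just the formulas of \refeqn{mX_nice}: since $m_A = \sqrt{L_\Delta}/\kappa_A$ and $m_B = \sqrt{L_\Delta}/\kappa_B$, the common factor $\sqrt{L_\Delta}$ cancels and the curvatures invert.

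There is no real obstacle here; the lemma is packaged as a corollary of \reflem{exp_Agol} together with the definition of $m_A, m_B$ in \refeqn{mX_nice}. The only minor care needed is to note that $m_A > 0$ (since $\kappa_A, L_\Delta > 0$) and $\theta_A \in (0,\pi)$, so that $\theta_A/2 \in (0,\pi/2)$ and both $\cot(\theta_A/2)$ and $m_A$ lie in the same branch, ensuring the equality $\cot(\theta_A/2) = m_A$ holds with the correct sign.
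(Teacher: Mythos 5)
Your proposal is correct and follows the paper's own argument essentially verbatim: the first identity is read off from \reflem{exp_Agol}(3) (the right triangle with vertices $0$, $m_A$, $m_A+i$ is just the geometric form of your $\tan(\theta_A/2)=1/m_A$ computation), and the second follows from cancelling $\sqrt{L_\Delta}$ in \refeqn{mX_nice}. The extra remarks on positivity and the alternative half-angle derivation are sound but not needed.
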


\begin{proof}
The first statement is immediate from \reflem{exp_Agol} (3), considering a right-angled triangle formed by $0$, $m_A$ and $m_A + i$ in the complex plane. The second statement then follows from writing $m_A, m_B$ in the form of \refeqn{mX_nice}, with the same numerator $\sqrt{L_\Delta}$.
\end{proof}

In classic Euclidean geometry notation, with the side lengths of a triangle written as $a,b,c$, angles as $A,B,C$ and semiperimeter as $s$, we have $s = r_A + r_B + r_C$, so $r_A = s-a$, $r_B = s-b$ and $r_C = s-c$. \reflem{edge_eqn_fact} then follows from classic identities, straightforwardly derived from the cosine rule and double angle expressions:
\begin{gather*}
\sin \left( \frac{A}{2} \right) = \sqrt{ \frac{(s-b)(s-c)}{bc}} = \sqrt{ \frac{r_B r_C}{bc}}, \quad
\cos \left( \frac{A}{2} \right) = \sqrt{ \frac{s(s-a)}{bc} } = \sqrt{ \frac{(r_A + r_B + r_C) r_A}{bc} }, \\
\cot \left( \frac{A}{2} \right) = \sqrt{\frac{s(s-a)}{(s-b)(s-c)}}
= \sqrt{ \frac{(r_A + r_B + r_C) r_A}{r_B r_C}}
= \frac{ \sqrt{\kappa_A \kappa_B + \kappa_B \kappa_C + \kappa_C \kappa_A}}{\kappa_A}.
\end{gather*}

\subsection{Angle and cotangent identities}
\label{Sec:angles_cotangents}

In this section we prove some elementary algebraic facts we need in the sequel. These all concern variables of the form $m_\bullet \in \R_+$ and $\theta_\bullet \in (0, \pi)$, related by $m = \cot(\theta/2)$, or equivalently, $\arg(m + i) = \theta/2$. For now we just conceive of the $m_\bullet$ and $\theta_\bullet$ as real variables, without any associated geometry. We show certain properties of the $\theta_\bullet$ are equivalent to properties of the $m_\bullet$.

\begin{lem}
\label{Lem:triangle_equation_angle_sum}
Suppose $m_1, m_2, m_3 \in \R_+$ and $\theta_1, \theta_2, \theta_3 \in (0, \pi)$ satisfy $m_j = \cot(\theta_j/2)$ for $j=1,2,3$. Then the following are equivalent.
\begin{enumerate}
\item $\theta_1 + \theta_2 + \theta_3 = \pi$.
\item $m_1 m_2 m_3 = m_1 + m_2 + m_3$.
\end{enumerate}
\end{lem}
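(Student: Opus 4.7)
The plan is to encode both conditions as a single statement about the complex number $Z = \prod_{j=1}^3 (m_j + i)$, using \reflem{exp_Agol}(2). That lemma gives $e^{i\theta_j} = (m_j + i)/(m_j - i)$, and since $\prod_j (m_j - i) = \overline{Z}$, multiplying the three identities yields
\[
e^{i(\theta_1 + \theta_2 + \theta_3)} = \frac{Z}{\overline{Z}}.
\]
Note that $\overline{Z} \neq 0$, because each factor $m_j - i$ is nonzero for real $m_j$, so the ratio is well-defined and has modulus~$1$.

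The first step is to show that condition~(1) is equivalent to $Z/\overline{Z} = -1$. The forward direction is immediate from the displayed identity. For the converse, $Z/\overline{Z} = -1$ only forces $\theta_1 + \theta_2 + \theta_3 \equiv \pi \pmod{2\pi}$; but since each $\theta_j \in (0,\pi)$, the sum lies in $(0, 3\pi)$, in which $\pi$ is the unique odd multiple of $\pi$. Hence the sum must equal $\pi$ exactly, giving (1).

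The second step is to show that $Z/\overline{Z} = -1$ is equivalent to condition~(2). The ratio equals $-1$ iff $Z = -\overline{Z}$, i.e., $Z$ is purely imaginary, i.e., $\Re(Z) = 0$. A direct expansion of the cubic product gives
\[
Z = \bigl(m_1 m_2 m_3 - (m_1 + m_2 + m_3)\bigr) + i\bigl(m_1 m_2 + m_2 m_3 + m_3 m_1 - 1\bigr),
\]
so the vanishing of $\Re(Z)$ is exactly $m_1 m_2 m_3 = m_1 + m_2 + m_3$, as required.

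The only genuine subtlety is the range argument that pins the sum of the $\theta_j$ down to $\pi$ rather than some other odd multiple of $\pi$; everything else is an elementary complex-number expansion, so I expect no real obstacle.
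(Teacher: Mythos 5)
Your proof is correct and takes essentially the same route as the paper: both reduce the statement to the vanishing of $\Re\prod_{j=1}^3(m_j+i)$, identify that real part with $m_1m_2m_3-(m_1+m_2+m_3)$ by direct expansion, and use the range $\theta_1+\theta_2+\theta_3\in(0,3\pi)$ to pin the sum to exactly $\pi$ in the converse direction. The only cosmetic difference is that you phrase the argument via $e^{i\sum\theta_j}=Z/\overline{Z}$ (Lemma~\ref{Lem:exp_Agol}(2)) where the paper works directly with $\arg(m_j+i)=\theta_j/2$ (Lemma~\ref{Lem:exp_Agol}(3)).
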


\begin{proof}
We have $\arg (m_j + i) = \theta_j / 2$. So if (1) holds then $\arg (m_1 + i) (m_2 + i) (m_3 + i) = \pi/2$, so its real part $m_1 m_2 m_3 - m_1 - m_2 - m_3$ must be zero. Conversely, if (2) holds then the real part of $(m_1 + i)(m_2 + i)(m_3 + i)$ is zero, so its argument $(\theta_1 + \theta_2 + \theta_3)/2$ is equal to $\pi/2$ mod $\pi$. Hence $\theta_1 + \theta_2 + \theta_3$ is equal to $\pi$ mod $2\pi$. As each $\theta_j \in (0, \pi)$, we have $\theta_1 + \theta_2 + \theta_3 = \pi$.
\end{proof}

\begin{lem}
\label{Lem:vertex_equation_angle_sum}
Suppose $m_j \in \R_+$ and $\theta_j \in (0, \pi)$ satisfy $m_j = \cot(\theta_j/2)$ for $j \in \Z/d\Z$. Then the following are equivalent.
\begin{enumerate}
\item $\sum_{j=1}^d \theta_j = 2\pi (\beta + 1)$ for some integer $\beta \geq 0$.
\item $\sum_{I} (-1)^{\frac{ d-|I|-1 }{2} } \prod_{j \in I} m_{j} = 0$, where the sum is over $I \subseteq \Z/d\Z$ such that $|I| \equiv d-1$ mod $2$.
\end{enumerate}
\end{lem}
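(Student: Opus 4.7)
The plan is to mimic the argument of \reflem{triangle_equation_angle_sum}, working with the product $P := \prod_{j=1}^d (m_j + i)$. By \reflem{exp_Agol}(3), each factor has argument $\theta_j/2$, so
\[
\arg P \equiv \frac{1}{2} \sum_{j=1}^d \theta_j \pmod{2\pi}.
\]
Thus the condition that $P$ is real (equivalently, that $\mathrm{Im}(P) = 0$) is equivalent to $\sum_j \theta_j / 2 \in \pi \Z$, i.e.\ $\sum_j \theta_j \in 2\pi \Z$. Since each $\theta_j \in (0,\pi)$ gives $0 < \sum_j \theta_j < d\pi$, the sum being an integer multiple of $2\pi$ is equivalent to the existence of an integer $\beta \geq 0$ with $\sum_j \theta_j = 2\pi(\beta + 1)$. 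This handles the translation between (1) and the reality of $P$.

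Next I would identify $\mathrm{Im}(P) = 0$ with condition (2) by expanding the product. We have
\[
P = \prod_{j=1}^d (m_j + i) = \sum_{I \subseteq \Z/d\Z} i^{d - |I|} \prod_{j \in I} m_j,
\]
where $I$ ranges over all subsets (the variable $j \in I$ contributes $m_j$, while $j \notin I$ contributes $i$). The imaginary part picks out exactly those terms for which $d - |I|$ is odd, i.e.\ $|I| \equiv d - 1 \pmod 2$, and for such $I$ we have $i^{d-|I|} = i \cdot (-1)^{(d-|I|-1)/2}$. Therefore
\[
\mathrm{Im}(P) = \sum_{|I| \equiv d-1 \!\!\!\pmod 2} (-1)^{\frac{d-|I|-1}{2}} \prod_{j \in I} m_j,
\]
which is precisely the left-hand side of (2). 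Combining the two steps yields the equivalence.

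There is no serious obstacle; the only care needed is the bookkeeping of the sign $i^{d-|I|}$ and the range of $\sum_j \theta_j$ to guarantee $\beta \geq 0$ rather than an arbitrary integer. The positivity of the $m_j$ plays no role beyond ensuring $\theta_j \in (0, \pi)$ via the hypothesis $m_j = \cot(\theta_j/2)$, so the argument is essentially a direct generalisation of the three-variable case.
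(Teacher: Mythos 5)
Your proof is correct and follows essentially the same route as the paper: identify $\arg(m_j+i)=\theta_j/2$, observe that condition (2) is exactly the vanishing of $\Im\prod_{j}(m_j+i)$, and translate reality of the (nonzero) product into the angle sum being a positive multiple of $2\pi$. The only difference is that you spell out the binomial expansion of the product and the sign $i^{d-|I|}$, which the paper leaves implicit.
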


\begin{proof}
Again $\arg(m_j + i) = \theta_j / 2$. If (1) holds then $\arg \prod_{j=1}^d (m_j + i) = \pi (\beta + 1)$, so the product is real, hence its imaginary part $\sum_{I} (-1)^{\frac{ d-|I|-1 }{2} } \prod_{j \in I} m_{j}$ is zero. Conversely, if (2) holds then this imaginary part is zero, so $\prod_{j=1}^d (m_j + i)$ is nonzero and real, so its argument $\frac{1}{2} \sum_{j=1}^d \theta_j = \pi (\beta + 1)$ for some integer $\beta \geq 0$. Then $\sum_{j=1}^d \theta_j = 2\pi (\beta+1)$ as claimed.
\end{proof}

\section{Generalised Descartes circle theorems}
\label{Sec:Descartes}

In this section we discuss generalised Descartes circle theorems, including \refthm{gen_Descartes} and the results of \cite{MZ25}.

Throughout this section, let $K$ be the simplicial complex of an $n$-flower, with central vertex $v_\infty$, and boundary vertices, in cyclic order, $v_1, \ldots, v_n$, which we regard as $v_j$ over $j \in \Z/n\Z$. Let $\Delta_j$ for $j \in \Z/n\Z$ be the triangle in $K$ with vertices $v_\infty, v_{j-1}, v_j$. See \reffig{flower_complex} (left). 

In a realisation $\Phi \colon K \To \R^2$ of $K$, we have circles $C_\infty$ and $C_j$ for $j \in \Z/n\Z$ centred at $\Phi(v_\infty)$ and $\Phi(v_j)$ respectively, with curvatures $\kappa_\infty$ and $\kappa_j$ respectively. Let $\theta_j = \theta_{\Delta_j,v}$ be the angle subtended at $\Phi(v_\infty)$ by $\Phi(v_{j-1})$ and $\Phi(v_j)$. See \reffig{AgolFigure} (right). Let $m_j = m_{\Delta_j,v}$ be the corresponding circle packing variable defined by \refeqn{ms_in_terms_of_ks}. Then by \reflem{edge_eqn_fact} we have
\[
m_j = \sqrt{ \left( \frac{\kappa_j}{\kappa_\infty} + 1 \right) \left( \frac{\kappa_{j-1}}{\kappa_\infty} + 1 \right) - 1} = \cot \left( \frac{\theta_j}{2} \right).
\]

\begin{figure}
\centering
\begin{tikzpicture}[scale=2.2]
 \filldraw[black] (0,0) circle (1pt) node[above] at (0.02,0.07) {$v_{\infty}$};
 \draw[thick] (0,0)--(1,0);
 \draw[thick] (0,0)--(-1,0);
 \draw[thick] (0,0)--(0.707,0.707);
 \draw[thick] (0,0)--(0.5,-0.866);
 \draw[thick] (0,0)--(-0.5,0.866);
 \draw[thick] (0,0)--(-0.866,-0.5);
 \draw[thick] (1,0)--(0.707,0.707);
 \draw[thick] (0.707,0.707)--(-0.5,0.866);
 \draw[thick] (-0.5,0.866)--(-1,0);
 \draw[thick] (-1,0)--(-0.866,-0.5);
 \draw[thick] (0.5,-0.866)--(1,0);
 \node[above] at (0.6,0.12) {$\Delta_1$};
 \node[above] at (0.15,0.37) {$\Delta_2$};
 \node[left] at (-0.35,0.24) {$\Delta_3$};
 \node[left] at (-0.43,-0.18) {$\Delta_4$};
 \node[below] at (-0.2,-0.35) {$\cdots$};
 \node[below] at (0.5,-0.2) {$\Delta_n$};
 \node[right] at (1,0) {$v_0$};
 \node[above right] at (0.707,0.707) {$v_1$};
 \node[above] at (-0.5,0.866) {$v_2$};
 \node[left] at (-1,0) {$v_3$};
 \node[below] at (-0.866,-0.5) {$v_4$};
 \node[right] at (0.5,-0.866) {$v_{n-1}$};
\end{tikzpicture}
\begin{tikzpicture}[scale=1.3]
\draw[thick](0.47,0.01) coordinate (B) circle (1);
\draw[thick](2.16,-0.27) coordinate (C) circle (0.7);
\draw[thick](2.26,1.45) circle (0.6);
\draw[thick](1.15,1.34) circle (0.5);
\draw[thick](2.24,0.64) circle (0.21);
\draw[thick] (1.67,0.68) coordinate (A) circle (0.35);
\draw[-, thick, gray] (1.67,0.68) -- (0.47,0.01) node[below]{$C_0$};
\draw[-, thick, gray] (1.67,0.68) -- (2.16,-0.27) node[below]{$C_1$};
\draw[-, thick, gray] (1.67,0.68) -- (2.26,1.45);
\draw[-, thick, gray] (1.67,0.68) -- (1.15,1.34);
\draw[-, thick, gray] (1.67,0.68) -- (2.24,0.64);
\draw[-, thick, gray] (0.47,0.01) -- (2.16,-0.27);
\draw[-, thick, gray] (2.16,-0.27) -- (2.24,0.64);
\draw[-, thick, gray] (2.24,0.64) -- (2.26,1.45);
\draw[-, thick, gray] (2.26,1.45) -- (1.15,1.34);
\draw[-, thick, gray] (1.15,1.34) -- (0.47,0.01);
\node at (0.47,0.01)[circle,fill,inner sep=1.5pt]{};
\node at (2.16,-0.27)[circle,fill,inner sep=1.5pt]{};
\node at (2.26,1.45)[circle,fill,inner sep=1.5pt]{};
\node at (1.15,1.34)[circle,fill,inner sep=1.5pt]{};
\node at (2.24,0.64)[circle,fill,inner sep=1.5pt]{};
\pic[draw,->, "$\theta_1$"{shift=(-60:0.55)}, angle radius=0.35cm] {angle = B--A--C};
\end{tikzpicture}
\caption{Left: The complex of an $n$-flower. Right: A realisation.}
\label{Fig:AgolFigure}
\label{Fig:flower_complex}
\end{figure}

\subsection{Symmetric generalised Descartes theorem}
\label{Sec:symmetric_Descartes}

We can now immediately give a proof of \refthm{gen_Descartes}, the symmetric generalised Descartes theorem. Note that the flower may be branched: the petal circles can branch around the central circle arbitrarily.
\begin{proof}[Proof \# 1 of \refthm{gen_Descartes}]
Since $\sum_{j=1}^n \theta_j = 2\pi (\beta+1)$ for some integer $\beta \geq 0$ (the branching index) we have $\prod_{j=1}^n e^{i \theta_j} = 1$. By \reflem{exp_Agol}(2) then
\[
\prod_{j=1}^n \frac{m_j + i}{m_j - i} = 1.
\]
\end{proof}

\subsection{Spinorial approach}

We can also give a proof of \refthm{gen_Descartes} using the spinorial approach of \cite{MZ25}. We refer to that paper for details, and proceed as follows. By a dilation, we can regard $C_\infty$ as the unit circle. We invert the flower in $C_\infty$ so that the petal circles $C_j$ map to circles $\mathring{C_j}$ internally tangent to $C_\infty$. We regard the $\mathring{C_j}$ as horocycles in the conformal disc model of the hyperbolic plane $\mathbb{D}^2$. Via the Cayley map, we consider the horocycles in the upper half space model $\mathbb{U}^2$. Each horosphere can then be regarded as having a planar decoration in the sense of \cite{M_spinors_horospheres}, corresponding to a spinor $(\xi, \eta) \in \R^2$, well defined up to sign. Its centre is $\xi/\eta \in \R$ and its Euclidean diameter is $1/\eta^2$. Each circle $C_j$ thus corresponds to a horocycle $\overline{C_j}$ in $\mathbb{U}^2$, with a spinor $\alpha_j = (\xi_j, \eta_j)$, where we choose each $\eta_j$ to be positive. By a rotation of $\mathbb{D}^2$ if necessary, we can arrange that the horospheres $\overline{C_1}, \ldots, \overline{C_n}$ have centres in increasing order on the real line, i.e. $\xi_0/\eta_0 < \xi_1/\eta_1 < \cdots < \xi_{n-1}/\eta_{n-1}$.

(In \cite{MZ25}, for convenience, we also translated the $\xi_j/\eta_j$ so that $\xi_0/\eta_0 = 0$. However, such a parabolic isometry does not correspond to a Euclidean isometry of the unit disc, and so loses generality. Nonetheless, the arguments of \cite{MZ25} carry through without this convenient assumption.)

Under the (inverse) Cayley map $\mathfrak{S}^{-1} \colon \mathbb{D}^2 \To \mathbb{U}^2$, given by $z \mapsto i(z+1)/(1-z)$, we have
\[
e^{i\phi} \mapsto \frac{(e^{i\phi}+1)i}{1-e^{i\phi}} 
= \frac{ \left( e^{i\phi/2} + e^{-i\phi/2} \right) /2 }{ - \left( e^{i\phi/2}-e^{-i\phi/2} \right) / (2i) }
= -\cot(\phi/2).
\]
Thus, if a horosphere $\mathring{C}$ in $\mathbb{D}^2$ has centre $\exp(i \phi)$ and corresponds to a horosphere $\overline{C}$ in $\mathbb{U}^2$ with an associated spinor $(\xi, \eta)$, then
\begin{equation}
\label{Eqn:spinor_angle_relation}
\frac{\xi}{\eta} = - \cot \left( \frac{\phi}{2} \right).
\end{equation}

Spinors have an antisymmetric bilinear form $\{ \cdot, \cdot \}$, discussed by Penrose--Rindler in \cite{Penrose_Rindler84}. It gives the lambda length between horospheres, as discussed in \cite{M_spinors_horospheres}. We now also define another bilinear form $\langle \cdot, \cdot \rangle$.
\begin{definition}
For real spinors $\alpha = (\xi, \eta)$ and $\alpha' = (\xi', \eta')$,
\[
\{\alpha, \alpha'\} = \xi \eta' - \xi' \eta
\quad \text{and} \quad
\langle \alpha, \alpha' \rangle = \xi \xi' + \eta \eta'.
\]
\end{definition}
The form $\langle \cdot, \cdot \rangle$ is clearly symmetric; a similar form on spinors was considered in \cite{M_Varsha_quaternionic}. We can give it a geometric interpretation.
\begin{lem}
\label{Lem:angle_from_inner_product}
Suppose horocycles $\mathring{C}, \mathring{C'}$ of $\mathbb{D}^2$ have centres at $\exp(i\phi), \exp(i\phi')$ respectively, corresponding under the Cayley transform to horocycles in $\mathbb{U}^2$ with spinors $\alpha = (\xi, \eta)$, $\alpha' = (\xi', \eta')$. Then
\[
\frac{\langle \alpha, \alpha' \rangle}{ \{ \alpha, \alpha' \} } = -\cot \left( \frac{\phi' - \phi}{2} \right).
\]
\end{lem}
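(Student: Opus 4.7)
The plan is to reduce the identity to a direct algebraic computation using equation \refeqn{spinor_angle_relation}, and then recognise the result via a standard cotangent subtraction formula.

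First I would substitute using \refeqn{spinor_angle_relation}. Writing $t = \cot(\phi/2)$ and $t' = \cot(\phi'/2)$, we have $\xi = -t\eta$ and $\xi' = -t'\eta'$, where $\eta, \eta' > 0$. This expresses both coordinates of each spinor in terms of the angle data and the (nonzero) second coordinates $\eta, \eta'$.

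Next I would compute the two bilinear forms. Direct substitution gives
\[
\{\alpha, \alpha'\} = \xi\eta' - \xi'\eta = (-t\eta)\eta' - (-t'\eta')\eta = (t' - t)\,\eta\eta',
\]
\[
\langle \alpha, \alpha' \rangle = \xi\xi' + \eta\eta' = (-t\eta)(-t'\eta') + \eta\eta' = (t t' + 1)\,\eta\eta'.
\]
Taking the ratio cancels the factor $\eta\eta'$, yielding
\[
\frac{\langle \alpha, \alpha' \rangle}{\{\alpha, \alpha'\}} = \frac{t t' + 1}{t' - t}.
\]

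Finally I would invoke the standard cotangent-of-difference identity
\[
\cot(A - B) = \frac{\cot A \cot B + 1}{\cot B - \cot A},
\]
applied with $A = \phi'/2$ and $B = \phi/2$, to obtain
\[
\cot\!\left(\frac{\phi' - \phi}{2}\right) = \frac{t' t + 1}{t - t'} = -\frac{t t' + 1}{t' - t},
\]
which upon comparison with the ratio above gives the desired identity. There is essentially no obstacle here: the work is purely algebraic, and the only ``conceptual'' point is making sure to apply \refeqn{spinor_angle_relation} to both horocycles before computing the forms. One might briefly remark that $\eta, \eta' \neq 0$ (so that the cancellation is legitimate) because these are the reciprocals of the square roots of Euclidean diameters of genuine horocycles in $\mathbb{U}^2$, and that $\phi \neq \phi'$ (so $t \neq t'$ and the denominator is nonzero) since the horocycles have distinct centres.
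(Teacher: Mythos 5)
Your proof is correct and follows essentially the same route as the paper's: both arguments combine the relation $\xi/\eta = -\cot(\phi/2)$ from \refeqn{spinor_angle_relation} with the cotangent subtraction formula; you merely run the computation in the opposite direction (substituting into the bilinear forms first and then recognising the cotangent, rather than expanding the cotangent and recognising the forms). The added remarks about $\eta,\eta' \neq 0$ and $\phi \neq \phi'$ are a small but harmless bonus.
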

In other words, $\langle \alpha, \alpha' \rangle/\{\alpha, \alpha'\}$ describes the angle subtended by the centres of $\mathring{C}, \mathring{C'}$ at the centre of $\mathbb{D}^2$; this angle is also the angle subtended by the centres of $\overline{C}, \overline{C'}$ at $i$ in $\mathbb{U}^2$.
\begin{proof}
The cotangent addition formula and \refeqn{spinor_angle_relation} yield
\[
\cot \left( \frac{\phi' - \phi}{2} \right)
= \frac{ -\cot (\phi/2) \cot (\phi'/2) -1 }{\cot (\phi'/2) - \cot (\phi/2) }
= \frac{ - \frac{\xi \xi'}{\eta \eta'} - 1}{- \frac{\xi'}{\eta'} + \frac{\xi}{\eta} }
= \frac{-\xi \xi' - \eta \eta'}{\xi \eta' - \xi' \eta}
= \frac{-\langle \alpha, \alpha' \rangle}{ \{ \alpha, \alpha' \} }.
\]
\end{proof}

Returning to the approach of \cite{MZ25}, as consecutive horospheres $C_{j-1}, C_j$ are tangent we have $\{\alpha_{j-1}, \alpha_j\} = \pm 1$. With our choice of all $\eta_j$ positive we in fact have \cite[lemma 4.2]{MZ25}
\[
\{\alpha_0, \alpha_1 \} = \cdots = \{\alpha_{n-2}, \alpha_{n-1}\} = -1, \quad
\{ \alpha_0, \alpha_{n-1} \} = -1.
\]
Letting the centre of the horosphere $\mathring{C_j}$ be $\exp(i \phi_j)$, as we have arranged $\overline{C_0}, \overline{C_1}, \ldots, \overline{C_{n-1}}$ to have centres in increasing order along $\R$, then we may take $0 < \phi_0 < \phi_1 < \cdots < \phi_{n-1} < 2\pi$. See \reffig{arguments_angles}. Now the angle $\theta_j$ subtended at $\Phi(v_\infty)$ by $\Phi(v_{j-1})$ and $\Phi(v_j)$ in the $n$-flower is preserved under inversion, to become the angle subtended at $0$ in $\mathbb{D}^2$ by the centres (as Euclidean circles or hyperbolic horocycles) of $\mathring{C}_{j-1}$ and $\mathring{C_j}$, so $\theta_j = \phi_j - \phi_{j-1}$. This is true generally only mod $2\pi$; it is true on the nose for $j=1, \ldots, n-1$, and we have $\theta_0 = \phi_0 - \phi_{n-1} + 2\pi$. Being an angle in a tangent triple of circles (or in a Euclidean triangle), each $\theta_j$ satisfies $0 < \theta_j < \pi$.

\begin{figure}
\begin{center}
\begin{tikzpicture}
 \draw[thick] (0,0) circle (2);
 \coordinate (O) at (0,0);
 \coordinate (A) at (2,0);
 \coordinate (B) at (1.732,1);
 \coordinate (C) at (1,1.732);
 \coordinate (D) at (-1.147,1.638);
 \coordinate (E) at (-1.970,0.347);
 \coordinate (F) at (1.414,-1.414);
 \coordinate (start) at (-1.879,-0.684);
 \coordinate (end) at (0.517,-1.93);
 \draw[dashed,gray] (0,0)--(A);
 \draw[thick] (0,0)--(1.732,1);
 \draw[thick] (0,0)--(1,1.732);
 \draw[thick] (0,0)--(-1.147,1.638);
 \draw[thick] (0,0)--(-1.970,0.347);
 \draw[thick] (0,0)--(1.414,-1.414);
 \pic [draw, ->, "$\theta_0$", angle eccentricity=1.5] {angle = F--O--B};
 \pic [draw, ->, "$\theta_1$", angle eccentricity=1.5] {angle = B--O--C};
 \pic [draw, ->, "$\theta_2$", angle eccentricity=1.5] {angle = C--O--D};
 \pic [draw, ->, "$\theta_3$", angle eccentricity=1.5] {angle = D--O--E};
 \pic [draw, thick, dotted, angle radius=1cm, angle eccentricity=1.5] {angle = start--O--end};
 \node at (2*1.15,0*1.15) {$0$};
 \node at (1.732*1.15,1*1.15) {$\phi_0$};
 \node at (1*1.15,1.732*1.15) {$\phi_1$};
 \node at (-1.147*1.15,1.638*1.15) {$\phi_2$};
 \node at (-1.970*1.15,0.347*1.15) {$\phi_3$};
 \node at (1.414*1.15,-1.414*1.15) {$\phi_{n-1}$};
\end{tikzpicture}
\end{center}
 \caption{Arguments $\phi_j$ of centres of horospheres, and angles $\theta_j$.}
 \label{Fig:arguments_angles}
\end{figure}

Now \reflem{angle_from_inner_product} applied to $\alpha_{j-1}, \alpha_j$ for $1 \leq j \leq n-1$ gives
\[
m_j
= \cot \left( \frac{\theta_j}{2} \right) 
= \cot \left( \frac{\phi_j - \phi_{j-1}}{2} \right)
= \langle \alpha_{j-1}, \alpha_j \rangle.
\]
since $\{\alpha_{j-1}, \alpha_j\} = -1$. Applied to $\alpha_{n-1}, \alpha_0$, using $\phi_0 - \phi_{n-1} = \theta_0 - 2\pi$ and $\{\alpha_{n-1}, \alpha_0\} = 1$, we obtain
\[
m_0
= \cot \left( \frac{\theta_0}{2} \right)
= \cot \left( \frac{\theta_0 - 2\pi}{2} \right) = \cot \left( \frac{\phi_0 - \phi_{n-1}}{2} \right)
= - \langle \alpha_{n-1}, \alpha_0 \rangle.
\]

Letting $z_j = \xi_j + i \eta_j$, we then have
\[
z_{j-1} \, \overline{z_j}
= \langle \alpha_{j-1}, \alpha_j \rangle - i \{ \alpha_{j-1}, \alpha_j \}.
\]
Thus
\[
z_{j-1} \, \overline{z_j}
= m_j + i \quad \text{for $1 \leq j \leq n-1$,}
\quad \text{and} \quad
z_{n-1} \, \overline{z_0}
= - m_0 - i.
\]
(In \cite{MZ25} we only considered $z_{j-1} \overline{z_j}$ for $1 \leq j \leq n-1$.)

Thus as discussed in \cite[section 5]{MZ25}, the sequence $z_0, z_1, \ldots, z_{n-1}$ lies in the upper half of the complex plane, with arguments strictly decreasing, and each pair $z_{j-1}, z_j$ (and $z_{n-1}, z_0$) spanning a parallelogram of area $1$. (The assumption $\xi_0/\eta_0 = 0$ made $z_0$ lie on the imaginary axis, but the reasoning works without this assumption.) By multiplying together the $z_{j-1} \overline{z_j}$ for $1 \leq j \leq n-1$, we obtain $\prod_{j=1}^{n-1} (m_j + i)$; doing the same for their conjugates, and subtracting, we obtain the generalised Descartes formula of \cite{MZ25}.

If instead we multiply together all $n$ of the terms $z_{j-1} \overline{z_j}$, we obtain a ``spinorial" proof of the symmetric generalised Descartes equation of \refthm{gen_Descartes}.

\begin{proof}[Proof \# 2 of \refthm{gen_Descartes}]
\[
\prod_{j=1}^n |z_j|^2 = \prod_{j=1}^{n} z_{j-1} \, \overline{z_j}
= (m_1 + i)(m_2 + i) \cdots (m_{n-1} + i)(-m_0 -i)
= - \prod_{j=1}^n \left( m_j + i \right).
\]
Thus
\[
\prod_{j=1}^n \left( m_j + i \right) = - \prod_{j=1}^n |z_j|^2,
\]
and the conjugate of this calculation shows that $\prod_{j=1}^n \left( m_j - i \right) = - \prod_{j=1}^n |z_j|^2$ also, 
implying the result.
\end{proof}

\section{Circle packing equations}
\label{Sec:circle_packings_solve_equations}

In this section we show that a circle packing of the type considered in \refthm{circle_packing_eqs_general} obeys the circle packing equations. We will first deal with the case where $K$ is planar, i.e. an open, closed or semi-open disc; we refer to all these cases as discs for convenience. We then deal with spheres and tori in \refsec{spherical_case_eqns} and \refsec{torus_eqns_satisfied} respectively.

\subsection{ Disc case}
\label{Sec:disc_case}

Let $K$ be a simplicial complex triangulating an oriented plane surface. We may have $K$ finite or infinite, triangulating a closed disc, or open or semi-open disc respectively. 
As $K$ triangulates a surface, each interior vertex has finite degree. However boundary vertices may have infinite degree. Infinite-degree boundary vertices have neighbourhoods homeomorphic to an open half-disc or half-open half-disc, i.e. to a neighbourhood of $0$ in $\{z \in \C \mid z = 0 \text{ or } \arg z \in (0, \pi) \}$ or $\{z \in \C \mid z = 0 \text{ or } \arg z \in [0, \pi) \}$ respectively. The latter case is slightly more general than the usual notion of surface with boundary.

Suppose we have a circle packing for $K$ in $\R^2$, and let $\Phi$ be a realisation, with circles $\{C_v\}_{v \in V(K)}$, radii $\{r_v\}_{v \in V(K)}$, curvatures $\{\kappa_v\}_{v \in V(K)}$, and angles $\{\theta_{\Delta,v}\}_{(\Delta,v) \in C(K)}$. Note that all $\kappa_v, r_v, \theta_{\Delta,v}$ are positive and moreover $0 < \theta_{\Delta,v} < \pi$. (We can denote corners by $(\Delta,v)$, where $\Delta \in F(K)$ and $v$ is a vertex of $\Delta$, since $K$ is simplicial.) Note that when $K$ is infinite, the sets $V(K), C(K)$ are also infinite, and we have infinitely many quantities $\kappa_v, r_v, \theta_{\Delta,v}$.

To each corner $(\Delta, v)$ of a triangle $\Delta$ of $K$ with vertices $u,v,w$, we also associate a positive real number $m_{\Delta,v}$ by \refeqn{ms_in_terms_of_ks}, i.e. $m_{\Delta,v} = \sqrt{L_\Delta}/\kappa_v$, where $L_\Delta = \kappa_u \kappa_v + \kappa_v \kappa_w + \kappa_w \kappa_u$. Note that $m_{\Delta,v}$ and $L_\Delta$ are manifestly positive real numbers. By \reflem{edge_eqn_fact} and \reflem{exp_Agol} then $m_{\Delta,v} = \cot ( \theta_{\Delta,v} / 2)$ and $\arg (m_{\Delta,v} + i) = \theta_{\Delta,v}/2$. 

 To verify the triangle equations, consider
a triangle $\Delta$ of $K$ with vertices $u,v,w,$ as in \reffig{circle_packing_eqn_vars} (right). We have three angles $\theta_{\Delta,u}$, $\theta_{\Delta,v}$ and $\theta_{\Delta,w}$, which we abbreviate to $\theta_u, \theta_v, \theta_w$; being the angles of a Euclidean triangle we have $\theta_u + \theta_v + \theta_w = \pi$. Similarly, we have $m_{\Delta, u}, m_{\Delta,v}, m_{\Delta,w}$ which we abbreviate to $m_u, m_v, m_w$.

\begin{lem}[Triangle equations]
\label{Lem:triangle_eqns}
The three quantities $m_u, m_v, m_w$ satisfy
\[
\label{Eqn:triangle_equation}
m_u m_v m_w = m_u + m_v + m_w.
\]
\end{lem}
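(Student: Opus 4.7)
The plan is to invoke the algebraic-geometric dictionary already established in the preliminaries. Specifically, the angle sum in a Euclidean triangle gives the geometric content, and \reflem{triangle_equation_angle_sum} converts that angle condition into the polynomial identity asserted here.

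First I would unpack the notation: since $\Phi(\Delta)$ is a Euclidean triangle with interior angles $\theta_u, \theta_v, \theta_w \in (0,\pi)$, they satisfy $\theta_u + \theta_v + \theta_w = \pi$. Next, I would recall from the setup just before the lemma that each $m_\bullet$ is a positive real with $m_\bullet = \cot(\theta_\bullet/2)$, which was in turn derived by applying \reflem{edge_eqn_fact} to the realisation. So the hypotheses $m_j \in \R_+$, $\theta_j \in (0,\pi)$, and $m_j = \cot(\theta_j/2)$ of \reflem{triangle_equation_angle_sum} are all satisfied.

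Finally, applying \reflem{triangle_equation_angle_sum} with the three corners of $\Delta$ in place of the indices $1,2,3$, the equivalence stated there lets us pass from $\theta_u + \theta_v + \theta_w = \pi$ directly to $m_u m_v m_w = m_u + m_v + m_w$, which is the triangle equation \refeqn{triangle_eqn_list}.

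There is essentially no obstacle: all the real work was done in \refsec{preliminaries}, where the identity $e^{i\theta} = (m+i)/(m-i)$ was translated into the equivalence of the angle-sum condition and the symmetric polynomial identity. The only thing to keep straight is that positivity of each $m_\bullet$ (which rules out the other possible branch $\theta_u+\theta_v+\theta_w \equiv \pi \pmod{2\pi}$ with a different total) is guaranteed by positivity of the curvatures and radii in the realisation, so the ``if and only if'' of \reflem{triangle_equation_angle_sum} applies cleanly in the direction we need.
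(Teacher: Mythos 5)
Your proof is correct and follows exactly the paper's own argument: the paper likewise deduces the triangle equation from $\theta_u+\theta_v+\theta_w=\pi$ by a direct application of \reflem{triangle_equation_angle_sum}, with the hypotheses $m_\bullet\in\R_+$ and $\theta_\bullet\in(0,\pi)$ supplied by the realisation as you note.
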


\begin{proof}
By \reflem{triangle_equation_angle_sum} this follows immediately from $\theta_u + \theta_v + \theta_w = \pi$.
\end{proof}

 To verify the edge equations,
suppose we have an interior edge $e$ of $K$, joining vertices $v_1,v_2$, with a triangle $\Delta_1$ to one side and a triangle $\Delta_2$ to the other, as in \reffig{circle_packing_eqn_vars} (centre). We consider the four corners $(\Delta_j, v_k)$ over $j,k=1,2$. 
\begin{lem}[Edge equations]
\label{Lem:edge_eqns}
The four quantities $m_{\Delta_1,v_1}, m_{\Delta_1,v_2}, m_{\Delta_2, v_1}, m_{\Delta_2, v_2}$ satisfy
\begin{equation}
\label{Eqn:edge_eqn}
m_{\Delta_1, v_1} m_{\Delta_2, v_2} = m_{\Delta_1, v_2} m_{\Delta_2, v_1}.
\end{equation}
\end{lem}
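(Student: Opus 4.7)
The plan is to observe that the edge equation is essentially a restatement of the fact, already packaged in \reflem{edge_eqn_fact}, that within any single triangle $\Delta$ the ratio $m_{\Delta,v}/m_{\Delta,u}$ depends only on the curvatures of the circles at $u$ and $v$, and not on $\Delta$ itself. So the strategy is simply to expand both sides of \refeqn{edge_eqn} using the formula \refeqn{ms_in_terms_of_ks}.

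Concretely, I would first write, for each of the four corners,
\[
m_{\Delta_1,v_1} = \frac{\sqrt{L_{\Delta_1}}}{\kappa_{v_1}}, \quad
m_{\Delta_1,v_2} = \frac{\sqrt{L_{\Delta_1}}}{\kappa_{v_2}}, \quad
m_{\Delta_2,v_1} = \frac{\sqrt{L_{\Delta_2}}}{\kappa_{v_1}}, \quad
m_{\Delta_2,v_2} = \frac{\sqrt{L_{\Delta_2}}}{\kappa_{v_2}},
\]
where $L_{\Delta_i}$ is the symmetric function of the three curvatures at the vertices of $\Delta_i$, as in \refeqn{mX_nice}. Then
\[
m_{\Delta_1,v_1} m_{\Delta_2,v_2} = \frac{\sqrt{L_{\Delta_1} L_{\Delta_2}}}{\kappa_{v_1} \kappa_{v_2}} = m_{\Delta_1,v_2} m_{\Delta_2,v_1},
\]
since both sides produce exactly the same expression.

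Equivalently, one can phrase the argument via \reflem{edge_eqn_fact}: that lemma gives $m_{\Delta_1,v_1}/m_{\Delta_1,v_2} = \kappa_{v_2}/\kappa_{v_1}$ and $m_{\Delta_2,v_1}/m_{\Delta_2,v_2} = \kappa_{v_2}/\kappa_{v_1}$, so the two ratios are equal, and cross-multiplying yields \refeqn{edge_eqn}. The conceptual content is that the circle packing variables $m_\bullet$ encode curvature ratios along an edge in a way that is independent of which adjacent triangle they are computed from, which is precisely the content of the edge equation as foreshadowed in the discussion following \refeqn{ms_in_terms_of_ks}.

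There is no real obstacle here: the edge equation is, by design, the compatibility condition built into the definition of the $m_\bullet$. The only mild bookkeeping point is that $\Delta_1 \neq \Delta_2$ (guaranteed because $\widetilde{K}$ is simplicial) and that, for the formula $m_{\Delta,v} = \sqrt{L_\Delta}/\kappa_v$ to be well defined at each of the four corners, we should either assume $v_1 \neq v_2$ or pass to the universal cover — a reduction already noted in the excerpt preceding the lemma statement.
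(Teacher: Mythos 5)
Your proposal is correct and matches the paper's proof, which likewise derives \refeqn{edge_eqn} in one line from \reflem{edge_eqn_fact} via the ratio identity $m_{\Delta_1,v_1}/m_{\Delta_1,v_2} = \kappa_{v_2}/\kappa_{v_1} = m_{\Delta_2,v_1}/m_{\Delta_2,v_2}$. The direct expansion using $m_{\Delta,v} = \sqrt{L_\Delta}/\kappa_v$ is just an unwound version of the same argument.
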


\begin{proof}
This follows immediately from, for instance, \reflem{edge_eqn_fact}:
\[
\frac{m_{\Delta_1,v_1}}{m_{\Delta_1,v_2}} = \frac{\kappa_{v_2}}{\kappa_{v_1}}
= \frac{m_{\Delta_2,v_1}}{m_{\Delta_2,v_2}}.
\]
\end{proof}

Finally, to verify the vertex equations, consider an interior vertex $v$ of $K$ of degree $d$, with adjacent triangles $\Delta_j$ in anticlockwise cyclic order over $j \in \Z/d\Z$, as in \reffig{circle_packing_eqn_vars} (left). We have angles $\theta_{\Delta_j,v}$ over $j \in \Z/d\Z$, which we abbreviate to $\theta_j$; similarly we abbreviate $m_{\Delta_j,v}$ to $m_v$. Let the branching index of $\Phi$ at $v$ be $\beta$, so that $\beta \geq 0$ is an integer and $\sum_{j} \theta_j = 2 \pi (\beta + 1)$.
\begin{lem}[Vertex equations]
\label{Lem:vertex_eqns}
The quantities $m_j$ over $j \in \Z/d\Z$ satisfy
\[
\sum_{I} (-1)^{\frac{ d-|I|-1 }{2} } \prod_{j \in I} m_{j} = 0,
\]
summing over $I \subseteq \Z/d\Z$ such that $|I| \equiv d-1$ mod $2$. Moreover if $\beta = 0$ then
\[
\label{Eqn:vertex_eqn_arg_version}
\sum_{j=1}^d \arg \left( m_{j} + i \right) = \pi.
\]
\end{lem}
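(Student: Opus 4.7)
The plan is to reduce both statements to identities already established in \refsec{preliminaries}, using the identification of $m_j$ with $\cot(\theta_j/2)$ and of $\arg(m_j + i)$ with $\theta_j/2$ from \reflem{edge_eqn_fact} and \reflem{exp_Agol}. The core geometric input we need is the angle sum around the vertex $v$, which is dictated by the branching structure of the realisation.

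First I would observe that, since each $\Phi(\Delta_j)$ is a Euclidean triangle, the angles $\theta_j = \theta_{\Delta_j,v}$ each lie in $(0,\pi)$, and the $m_j = m_{\Delta_j,v}$ are positive reals with $m_j = \cot(\theta_j/2)$. The triangles $\Phi(\Delta_j)$ fit together in anticlockwise cyclic order around $\Phi(v)$, so the total angle swept out at $\Phi(v)$ is $2\pi$ times a positive integer, namely $2\pi(\beta+1)$ by definition of the branching index:
\[
\sum_{j=1}^d \theta_j = 2\pi(\beta + 1).
\]

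For the polynomial identity, I would then invoke \reflem{vertex_equation_angle_sum} directly: that lemma asserts the equivalence of this angle sum condition with the vanishing of $\sum_I (-1)^{(d-|I|-1)/2} \prod_{j \in I} m_j$ over $|I| \equiv d-1$ mod $2$. Since the hypotheses of the lemma hold ($m_j \in \R_+$, $\theta_j \in (0,\pi)$, $m_j = \cot(\theta_j/2)$), the vertex equation follows immediately.

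For the unbranched case $\beta = 0$, I would use \reflem{exp_Agol}(3), which says $\arg(m_j + i) = \theta_j/2$; since each $m_j > 0$, each such argument lies in $(0, \pi/2)$, so there are no branch-cut issues when summing. Summing over $j$ and using $\sum_j \theta_j = 2\pi$ then yields
\[
\sum_{j=1}^d \arg(m_j + i) = \frac{1}{2} \sum_{j=1}^d \theta_j = \pi,
\]
as required. There is no serious obstacle here: the bulk of the work has already been done in the preliminaries, and this lemma is essentially packaging the Euclidean angle-sum around $\Phi(v)$ through the dictionary $m \leftrightarrow \cot(\theta/2) \leftrightarrow \arg(m+i)$.
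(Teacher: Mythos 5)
Your proof is correct and follows the paper's argument exactly: the angle sum $\sum_j \theta_j = 2\pi(\beta+1)$ around $\Phi(v)$ combined with \reflem{vertex_equation_angle_sum} gives the polynomial identity, and \reflem{exp_Agol}(3) gives the argument identity when $\beta = 0$. The paper's proof is just a terser version of the same two steps.
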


\begin{proof}
By \reflem{vertex_equation_angle_sum} the first equation follows immediately from $\sum_j \theta_j = 2\pi (\beta+1)$. The second equation follows immediately from $\arg (m_j + i) = \theta_j / 2$.
\end{proof}

It is immediate now from \reflem{triangle_eqns}, \reflem{edge_eqns} and \reflem{vertex_eqns} that, when $K$ is a simplicial complex triangulating a disc, a circle packing of $K$ in the Euclidean plane yields numbers $\{m_{\Delta,v}\}_{(\Delta,v) \in C(K)}$ satisfying the vertex, edge and triangle equations, and an unbranched circle packing of $K$ satisfies the unbranched vertex equations. When $K$ is infinite, there are infinitely may $m_{\Delta,v}$ satisfying infinitely many equations. Moreover, if two circle packings of $K$ are conformally equivalent, then they are related by an overall similarity of $\R^2$, hence yield the same numbers $m_\bullet$ satisfying the circle packing equations. In other words, we have proved one direction of \refthm{circle_packing_eqs_general} and \refthm{circle_packing_eqs_general_unbranched}, with the relations of \refprop{m_meaning} holding, when $K$ is a disc. 

 The above argument applies equally well when in slightly more general circumstances. If we consider a circle packing with some triples of circles reversing orientation, so that $\Phi$ may fold along some edges, the above arguments still yield angles $\theta_{\Delta,v}$, where we take negative angles in triangles with reversed orientation, and quantities $m_{\Delta,v}$ in the same way (which may now be negative). 

The above argument also applies to circle packings in the Euclidean plane of complexes $K$ which are not simply connected. A similar construction yields a realisation $\Phi$, angles $\theta_\bullet$ and variables $m_\bullet$ satisfying the circle packing equations. However, the converse is not true in general; we cannot rebuild a circle packing from a solution to the circle packing equations. See the comment at the end of \refsec{closed_disc}.

\subsection{Spherical case}
\label{Sec:spherical_case_eqns}

 Now let $K$ be a simplicial complex triangulating an oriented sphere. As in \refsec{circle_packing_eqns} let $\Delta_0$ be a triangle of $K$, and let $K_0$ be the simplicial 2-complex triangulating an oriented closed disc obtained by removing the interior of $\Delta_0$ from $K$. As $K$ is a simplicial complex, we may denote its corners by $(\Delta,v)$ where $\Delta \in F(K)$ and $v \in V(K)$ is a vertex of $\Delta$. Note that each corner of $K$ is either a corner of $K_0$ or a corner of $\Delta_0$. 

Let the vertices of $\Delta_0$ in order around its oriented boundary be $v_0, v_1, v_2$. These vertices are also the boundary vertices of $K_0$. Around the oriented boundary $\partial K_0$, the vertices are in the opposite order $v_2,v_1,v_0$.

Let $\Phi \colon K \To S^2$ be the realisation map of a circle packing for $K$ which is good for $\Delta_0$, with circles $C_v$ for each $v \in V(K)$. Then the north pole $N$ lies in the interstice of $\Delta_0$, and $\Phi^{-1}(N)$ is a single interior point of $\Delta_0$. Moreover, $\Phi(\Delta_0)$ is a geodesic triangle on $S^2$ containing $N$ in its interior, and its oriented boundary has vertices $\Phi(v_0), \Phi(v_1), \Phi(v_2)$ in order. If we regard each circle $C_v$ of the packing as bounding a disc containing $\Phi(v)$, then all the discs of the packing are disjoint from $N$. Restricting to $K_0$, we observe that $\Phi|_{K_0}$ is the realisation map of a circle packing of $K_0$ on $S^2$, whose image avoids $N$. 

We now apply a conformal automorphism of $S^2$ (i.e. M\"{o}bius transformation) to this spherical circle packing. For $j \in \Z/3\Z$, let $C_j$ be a circle on $S^2$ such that each $C_j$ and $C_{j+1}$ are tangent, and such that $C_{j+1}$ is obtained from $C_j$ by $2\pi/3$ rotation about the oriented axis from south to north pole. In other words, the $C_j$ form three mutually tangent circles arranged symmetrically about $N$. M\"{o}bius transformations act simply transitively on triples of tangent circles, so there is a unique M\"{o}bius transformation $M$ such that $M(C_{v_j}) = C_j$. As $\Phi(v_0), \Phi(v_1), \Phi(v_2)$ encircle $N$ around the oriented boundary of $\Phi(\Delta_0)$, $M$ sends the interstice between $C_{v_0}, C_{v_1}, C_{v_2}$ containing $N$ to the interstice between $C_0, C_1, C_2$ containing $N$. 

Applying $M$ to our spherical circle packing, we obtain another spherical circle packing of $K$ good for $\Delta_0$, with realisation $\Phi_M$. (Note $\Phi_M \neq M \circ \Phi$ in general since M\"{o}bius transformations do not generally preserve circle centres.) In this circle packing, $\Phi_M (\Delta_0)$ is a geodesic triangle which is symmetric under $2\pi/3$ rotation about the north-south axis, and the circle corresponding to each $v_j$ is $C_j$. Each circle of this packing still bounds a disc disjoint from $N$, similarly to the original packing. Restricting to $K_0$, we observe $\Phi_M|_{K_0}$ is the realisation of a circle packing of $K_0$ on $S^2$ whose image avoids $N$.

We may then apply stereographic projection $\mathcal{S} \colon S^2 \setminus \{N\} \To \R^2$ to this circle packing of $K_0$ on $S^2$. We obtain a circle packing of $K_0$ in the Euclidean plane. With our chosen conventions, $\mathcal{S}$ is orientation reversing, so this is really a circle packing of $K_0$ with reversed orientation, which we denote $\overline{K_0}$. 

Let this circle packing have realisation $\overline{\Phi} \colon \overline{K_0} \To \R^2$, with circles $\{\overline{C_v}\}_{v \in V(\overline{K_0})}$, radii $\{r_v\}_{v \in V(\overline{K_0})}$, curvatures $\{\kappa_v\}_{v \in V(\overline{K_0})}$, and angles $\{\theta_{\Delta,v}\}_{(\Delta,v) \in C(\overline{K_0})}$. (Note $\overline{\Phi} \neq \mathcal{S} \circ \Phi_M|_{K_0}$ in general, since stereographic projection does not generally preserve circle centres.) The vertices of $\Delta_0$ are also vertices of $K_0$, so $\overline{C_{v_j}}, r_{v_j}, \kappa_{v_j}$ are all well-defined for the vertices $v_j$ of $\Delta_0$. 

As the circles $C_j$ on $S^2$ of the packing $\Phi_M$ are related by $2\pi/3$ rotation of $S^2$ about the south-to-north axis, their images $\overline{C_{v_j}}$ under $\mathcal{S}$ are related by $2\pi/3$ rotation of $\R^2$ about the origin. So all $r_{v_j}$ are equal, as are all $\kappa_{v_j}$. The points $\overline{\Phi} (v_0), \overline{\Phi} (v_1), \overline{\Phi} (v_2)$ form the vertices of an equilateral triangle in $\R^2$ centred at the origin, in anticlockwise order. These are the boundary vertices of $\overline{K_0}$, in order, so the image of $\overline{\Phi}$ consists precisely of this triangle and its interior.

We can then regard $\Delta_0$ as corresponding to the exterior of this triangle. So for the three corners $(\Delta_0, v_j)$ of $\Delta_0$, we regard them the angles $\theta_{\Delta_0, v_j}$ as the exterior angles of this triangle, and set each $\theta_{\Delta_0, v_j} = 5\pi/3$. 

As in the disc case, for each corner $(\Delta, v)$ of $\overline{K_0}$ we associate a positive real number $m_{\Delta,v}$ satisfying $m_{\Delta,v} = \cot \left( \theta_{\Delta,v}/2 \right)$. We may use this equation also to define $m_{\Delta_0,v_j}$ for the corners $(\Delta_0,v_j)$ of $\Delta_0$, setting $m_{\Delta_0,v_j} = \cot \left( 5\pi/6 \right) = - \sqrt{3}$. 
Thus, from a spherical circle packing of $K$ which is good for $\Delta_0$, we obtain numbers $m_{\Delta,v}$ over all corners $(\Delta,v)$ of $K$. Although there were choices involved in this process, the resulting numbers $m_{\Delta,v}$ do not depend on these choices. In fact we have the following.

\begin{lem}
The numbers $\{m_{\Delta,v}\}_{(\Delta, v) \in C(K)}$ depend only on the conformal class of the original circle packing in $S^2$ good for $\Delta_0$.

\end{lem}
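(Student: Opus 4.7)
The plan is to show that the entire construction—Möbius normalisation, stereographic projection, and reading off angles—descends to conformal classes, by analysing how the intermediate choices transform under a conformal equivalence. Suppose we have two spherical circle packings of $K$ which are both good for $\Delta_0$ and which lie in the same conformal class. By definition, there is an orientation-preserving Möbius transformation $\Psi \colon S^2 \To S^2$ with $\Phi' = \Psi \circ \Phi$, so that $C'_v = \Psi(C_v)$ for each $v \in V(K)$.

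Next I would track the Möbius normalisation. The construction assigns to each good packing a unique Möbius $M$ characterised by $M(C_{v_j}) = C_j$ for $j \in \Z/3\Z$ (using the fact that Möbius transformations act simply transitively on ordered mutually tangent triples, with the interstice condition fixing the sign). Applied to $\Phi'$, this produces a unique $M'$ with $M'(C'_{v_j}) = C_j$. Since $C'_{v_j} = \Psi(C_{v_j})$, the Möbius $M \circ \Psi^{-1}$ also sends each $C'_{v_j}$ to $C_j$, so by uniqueness $M' = M \circ \Psi^{-1}$. Consequently
\[
M'(C'_v) = M' \bigl( \Psi(C_v) \bigr) = M(C_v)
\quad \text{for every} \quad v \in V(K).
\]
Thus the normalised spherical packings obtained from $\Phi$ and $\Phi'$ have identical circles on $S^2$, and so identical images under stereographic projection. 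In particular, the Euclidean realisation $\overline{\Phi}$ for $K_0$ and the corresponding angles $\theta_{\Delta,v}$ in its triangles are intrinsic to the conformal class. Since each $m_{\Delta,v}$ for $(\Delta,v) \in C(K_0)$ is $\cot(\theta_{\Delta,v}/2)$, and each $m_{\Delta_0,v_j}$ is set to the constant $-\sqrt{3}$ by convention, the entire family $\{m_{\Delta,v}\}_{(\Delta,v) \in C(K)}$ is determined by the conformal class.

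The only other apparent choice in the construction is the cyclic labelling of $v_0, v_1, v_2$ around $\partial \Delta_0$. A cyclic shift of this labelling changes $M$ by postcomposition with the $2\pi/3$ rotation of $S^2$ fixing $N$, and hence changes the Euclidean picture by a rotation of $\R^2$ about the origin. This is a Euclidean similarity and preserves all angles in all triangles, so the numbers $m_{\Delta,v}$ are unaffected. The principal subtlety to be mindful of is that the realisation maps $\Phi_M$ and $\overline{\Phi}$ are \emph{not} literally $M \circ \Phi$ or $\mathcal{S} \circ \Phi_M|_{K_0}$, because Möbius maps and stereographic projection move circle centres; however what matters here is the set of circles, which does transform equivariantly, and the triangle angles in the planar realisation depend only on this set of circles together with the combinatorial structure.
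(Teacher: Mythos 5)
Your main argument is essentially the paper's: given $\Phi' = \Psi \circ \Phi$, the uniqueness of the normalising M\"{o}bius transformation gives $M' = M \circ \Psi^{-1}$, so the two normalised spherical packings consist of identical circles, have identical images under stereographic projection, and hence identical angles and identical $m_{\Delta,v}$. That part is correct, and your explicit remark that it is the \emph{circles} (not the realisation maps) that transform equivariantly under $M$ and $\mathcal{S}$ is a sensible clarification of a point the paper only flags parenthetically.

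There is, however, one choice you have not accounted for, and your sentence ``the only other apparent choice in the construction is the cyclic labelling of $v_0, v_1, v_2$'' is not accurate. The reference circles $C_0, C_1, C_2$ are only required to be a mutually tangent triple permuted by the $2\pi/3$ rotation about the polar axis; this pins them down only up to the two-real-parameter group of M\"{o}bius transformations fixing the north and south poles. Since the lemma asserts that the $m_{\Delta,v}$ are well defined on the conformal class, independence from this choice must also be checked; it is precisely the second half of the paper's proof. Two choices of the triple $\{C_j\}$, hence of $M$ and of $\Phi_M$, differ by such a transformation, which under stereographic projection becomes a spiral similarity of $\R^2$ about the origin ($z \mapsto cz$, $c \in \C^*$); this preserves every triangle angle and therefore every $m_{\Delta,v}$. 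Your cyclic-relabelling observation is the special case where $c$ is a cube root of unity; the dilation component is what is missing. Once this is added, the proof is complete.
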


\begin{proof}
Suppose $\Phi, \Phi'$ are the realisation maps of two conformally equivalent circle packings of $K$, both good for $\Delta_0$. Then $\Phi' = \Psi \circ \Phi$, where $\Psi$ is a M\"{o}bius transformation of the sphere. As both circle packings are good for $\Delta_0$, the north pole lies in the interstice of $\Delta_0$ in both packings. After applying a M\"{o}bius transformation to send the three circles of $\Delta_0$ to the three circles $C_j$, the packings are related by M\"{o}bius transformation which fixes all the $C_j$, hence must be the identity; thus the two packings become identical.

Any two choices of the circles $C_j$ on $S^2$ are related by a loxodromic M\"{o}bius transformation fixing the north and south poles. Similarly, any two choices of $M$, and the packing $\Phi_M$, are related by such a loxodromic. Hence any two resulting Euclidean circle packings $\overline{\Phi}$ on $\R^2$ are related by a spiral symmetry about the origin, and have the same angles $\theta_{\Delta,v}$ and numbers $m_{\Delta,v}$.
\end{proof}

\begin{lem}
The $\{m_{\Delta,v}\}_{(\Delta, v) \in C(K)}$ satisfy the circle packing equations for $(K, \Delta_0)$. If the packing is unbranched, then they satisfy the unbranched circle packing equations for $(K, \Delta_0)$.
\end{lem}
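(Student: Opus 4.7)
The plan is to verify each circle packing equation in turn, splitting into those whose corners all lie in $K_0$ (which reduce immediately to the disc case already handled in \refsec{disc_case} for the Euclidean realisation $\overline{\Phi}$ of $\overline{K_0}$) and those involving corners of $\Delta_0$ (which need short extra arguments).

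First, the sphere-closing equations \refeqn{sphere-closing_eqn_list} hold by construction, and the triangle equation for $\Delta_0$ reduces to the direct computation $(-\sqrt{3})^3 = 3(-\sqrt{3})$. For each triangle $\Delta \neq \Delta_0$, each interior edge of $K_0$, and each interior vertex of $K_0$, the corresponding equation follows immediately from \reflem{triangle_eqns}, \reflem{edge_eqns}, and \reflem{vertex_eqns} applied to $\overline{\Phi}$; the unbranched versions at interior vertices of $K_0$ follow likewise.

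Next I would handle the edge equations at the three edges of $\Delta_0$, each of which borders $\Delta_0$ on one side and some $\Delta' \in F(K_0)$ on the other, joining two vertices $v_j, v_k$ of $\Delta_0$. Using $m_{\Delta_0,v_j} = m_{\Delta_0,v_k} = -\sqrt{3}$, the edge equation reduces to $m_{\Delta',v_j} = m_{\Delta',v_k}$. This holds because the circles of $\overline{\Phi}$ at $v_0, v_1, v_2$ are related by $2\pi/3$ rotation about the origin, so their curvatures agree; by \refeqn{mX_nice}, $m_{\Delta',v_j} = \sqrt{L_{\Delta'}}/\kappa_{v_j} = \sqrt{L_{\Delta'}}/\kappa_{v_k} = m_{\Delta',v_k}$.

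The main obstacle is the vertex equations at the vertices $v_j$ of $\Delta_0$: a direct appeal to \reflem{vertex_equation_angle_sum} is unavailable because the corner variable $m_{\Delta_0,v_j} = -\sqrt{3}$ is negative and lies outside its hypotheses. I would instead argue directly via arguments of complex numbers, noting that $\arg(-\sqrt{3}+i) = 5\pi/6$, consistent with the assignment $\theta_{\Delta_0,v_j} = 5\pi/3$. The key geometric input is that since the outer boundary of $\overline{\Phi}(\overline{K_0})$ is an equilateral triangle with interior angle $\pi/3$ at $\overline{\Phi}(v_j)$, the $K_0$-triangles at $v_j$ form a fan at $\overline{\Phi}(v_j)$ of total angular extent $\pi/3 + 2\pi\beta_j$, where $\beta_j \geq 0$ is the combinatorial number of extra wrappings, which agrees with the branching index of the spherical packing at $v_j$ in $K$. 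Hence
\[
\sum_{\Delta \ni v_j} \arg\bigl(m_{\Delta, v_j} + i\bigr) = \frac{1}{2}\Bigl(\frac{\pi}{3} + 2\pi\beta_j\Bigr) + \frac{5\pi}{6} = \pi(\beta_j + 1),
\]
so $\prod_{\Delta \ni v_j}(m_{\Delta,v_j} + i)$ is real, and taking its imaginary part gives the vertex equation \refeqn{vertex_eqn_list}. In the unbranched case $\beta_j = 0$, the total argument equals exactly $\pi$, matching the unbranched vertex equation \refeqn{unbranched_vertex_eqn_list}.
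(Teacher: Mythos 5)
Your proposal is correct and follows essentially the same route as the paper's proof: reduce to the disc case for all equations supported inside $K_0$, use the equal curvatures of the circles at $v_0, v_1, v_2$ to get the edge equations of $\Delta_0$, and verify the vertex equations at the $v_j$ directly via $\arg(-\sqrt{3}+i) = 5\pi/6$ together with the $\pi/3$ corner angle of the equilateral image. The only divergence is that you allow a branching index $\beta_j$ at $v_j$ while the paper asserts the angle sum of the $K_0$-corners there is exactly $\pi/3$ (justified because a good packing has degree $1$ and is therefore unbranched); your extra generality is harmless and, if anything, slightly more careful.
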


\begin{proof}
Applying the disc case to the packing of $\overline{K_0}$ with realisation $\Phi_0$, the $m_{\Delta,v}$ over $(\Delta,v) \in C(\overline{K_0})$ satisfy the circle packing equations for $\overline{K_0}$ (and $K_0$: the equations are independent of the choice of orientation of the complex). By construction, the sphere-closing equations \refeqn{sphere-closing_eqn_list} are also satisfied. Substituting all variables as $-\sqrt{3}$ into the triangle equation \refeqn{triangle_eqn_list} for $\Delta_0$, this equation is also satisfied. It thus remains to show that the $m_{\Delta,v}$ satisfy the edge equations at the edges of $\Delta_0$ and the vertex equations at the vertices of $\Delta_0$.

For $j \in \Z/3\Z$, let $e_j$ be the edge of $K$ from $v_j$ to $v_{j+1}$. So $e_j$ is an edge of $\Delta_0$ and a boundary edge of $K_0$. Let $\Delta_j'$ be the triangle of $K_0$ containing $e_j$. We have $m_{\Delta_0,v_j} = m_{\Delta_0,v_{j+1}} = -\sqrt{3}$ and $\kappa_{v_j} = \kappa_{v_{j+1}}$. By \reflem{edge_eqn_fact} in triangle $\Delta'_j$ we have
\[
\frac{m_{\Delta'_j,v_j}}{m_{\Delta'_j,v_{j+1}}}
=\frac{\kappa_{v_{j+1}}}{\kappa_{v_j}} = 1,
\quad \text{so} \quad
(-\sqrt{3})m_{\Delta'_j,v_j} = (-\sqrt{3})m_{\Delta'_j,v_{j+1}},
\]
giving the edge equation at $e_j$.

Finally, consider a vertex $v_j$, where $j \in \Z/3\Z$. As $\Phi_0$ has image an equilateral triangle with $v_j$ as a vertex, the angles $\theta_{\Delta,v_j}$ at corners of $K_0$ at $v_j$ sum to $\pi/3$. We have $\theta_{\Delta_0,v_j} = 5\pi/3$. So summing over triangles $\Delta$ with a corner at $v_j$, we have $\sum_{\Delta} \theta_{\Delta,v_j} = 2\pi$. Hence by \reflem{exp_Agol}(3) $\prod_\Delta (m_{\Delta,v_j} + i)$ has argument $\pi$, so its imaginary part is $0$, giving the vertex equation at $v_j$.

If the original circle packing on the sphere is unbranched (in fact the condition for the packing to be good implies $\Phi$ has degree $1$, so it is unbranched), then as in \refsec{disc_case}, the unbranched vertex equations are satisfied at interior vertices of $K_0$. We have observed explicitly that the angles $\theta_{\Delta,v_j}$ at each vertex of $\Delta_0$ sum to $2\pi$, so by \reflem{exp_Agol}(3) the $\arg(m_{\Delta,v_j}+i)$ sum to $\pi$, giving the remaining unbranched vertex equations. 
\end{proof}

Even though \refprop{m_meaning} is only stated for Euclidean circle packings, the relations of that proposition apply to the circle packing $\overline{\Phi}$ obtained after stereographic projection.

As in the disc case, the arguments of this section apply equally well to slightly more general circle packings. Again, if some triples of circles reverse orientation, realisation maps may fold along some edges, but using negative angles, the circle packing equations will still be satisfied. Even if a circle packing is not good, as long as no circle passes through the north pole, we may convert it to a circle packing in the plane via stereographic projection; circles ``covering the north pole" will be ``turned inside out", but using negative angles we can obtain a solution to the circle packing equations.

\subsection{Toroidal case}
\label{Sec:torus_eqns_satisfied}

Let $K$ be a $\Delta$-complex triangulating an oriented torus, with universal cover $\widetilde{K}$ a simplicial complex triangulating $\R^2$.

On $K$, there exists a sequence of oriented edges (possibly a single edge) forming a simple closed non-contractible oriented curve, which we denote $\mathfrak{l}$. Cutting along $\mathfrak{l}$ yields a $\Delta$-complex triangulating an oriented annulus. On this annulus, there exists a sequence of edges (again possibly a single edge) forming a simple properly embedded arc from one boundary component to the other, which we denote $\mathfrak{m}$. Cutting along $\mathfrak{m}$ yields a simplicial complex $K_0$ triangulating an oriented disc, whose boundary consists of four polygonal paths, namely two copies of $\mathfrak{l}$ (oriented oppositely around $\partial K_0$), and two copies of $\mathfrak{m}$ (also oriented oppositely around $\partial K_0$). This $K_0$ forms a fundamental domain for $K$: the universal cover $\widetilde{K}$ of $K$ is formed by gluing together a $\Z^2$ family of copies of $K_0$ along copies of $\mathfrak{l}$ and $\mathfrak{m}$. The curves $\mathfrak{l}$ and $\mathfrak{m}$ can be regarded as simple closed curves on $K$ which form a basis for $H_1 (K) \cong \Z^2$. By orienting $\mathfrak{l},\mathfrak{m}$ appropriately, we may assume $\mathfrak{l},\mathfrak{m}$ form a positively oriented basis, as in \reffig{fundamental_domain_complex}. As a subcomplex of the simplicial complex $\widetilde{K}$, $K_0$ is also a simplicial complex. 

\begin{figure}
\begin{center}
\begin{tikzpicture}[scale=1.2]
 \draw (0,0) -- (1,1)--(0.4,1.6)--(1.1,2.3)--(0.6,3)--(1,3.5);
 \draw (3.5,0.1) -- (4.5,1.1)--(3.9,1.7)--(4.6,2.4)--(4.1,3.1)--(4.5,3.6);
 \draw (1,3.5) -- (1.6,3.1)--(2.1,3.5)--(2.9,3.1)--(4.5,3.6);
 \draw (0,0) -- (0.6,-0.4)--(1.1,0)--(1.9,-0.4)--(3.5,0.1);
 \filldraw (0,0) circle (1pt);
 \filldraw (1,3.5) circle (1pt);
 \filldraw (3.5,0.1) circle (1pt);
 \filldraw (4.5,3.6) circle (1pt);
 \draw[->] (0.4,1.6)--(0.85,2.05);
 \draw[->] (3.9,1.7)--(4.35,2.15);
 \draw[->>] (2.1,3.5)--(2.5,3.3);
 \draw[->>] (1.1,0)--(1.5,-0.2);
 \draw[dashed,->] (-0.5,0) -- (0.5,3.5) node[midway, left] {$\mathfrak{m}$};
 \draw[dashed,->] (4.5,0.1) -- (5.5,3.6) node[midway, right] {$\mathfrak{m}$};
 \draw[dashed,->] (1,3.9) -- (4.5,4) node[midway,above] {$\mathfrak{l}$};
 \draw[dashed,->] (0,-0.7) -- (3.5,-0.6) node[midway,below ] {$\mathfrak{l}$};
 \node at (2.2,1.8) {$K_0$};
 \draw[thick, OliveGreen,->] (0.5,0.5) -- (2,0.55) node[above] {$\lambda$};
 \draw[thick, OliveGreen] (2,0.55) -- (4,0.6);
 \draw[thick,OliveGreen,->] (2.7,-0.15) -- (3.2,1.6) node[left] {$\mu$};
 \draw[thick,OliveGreen] (3.7,3.35) -- (3.2,1.6);
\end{tikzpicture}
\end{center}
 \caption{ Fundamental domain complex $K_0$, with sides consisting of two
		 copies of $\mathfrak{l}$ and $\mathfrak{m}$, and their pushoff normal curves $\lambda$ and $\mu$.}
 \label{Fig:fundamental_domain_complex}
\end{figure}

We refer to a complex $K_0$ constructed in this way as a \emph{fundamental domain complex for $K$}. Note that not all (oriented) bases of $H_1 (K)$ can arise as the (homology classes of the) boundary curves $\mathfrak{l}$ and $\mathfrak{m}$ of a fundamental domain complex for $K$.

Suppose now we have a circle packing of $K$ on a Euclidean torus $T^2$, with realisation map $\Phi \colon K \To T^2$. This circle packing develops to a circle packing of $\widetilde{K}$ in the Euclidean plane $\R^2$, with realisation map $\widetilde{\Phi} \colon \widetilde{K} \To \R^2$. This circle packing of $\widetilde{K}$ in the plane consists of copies of circle packings of the fundamental domain complex $K_0$, related by Euclidean translations. Choose one of these packings of $K_0$ arbitrarily, so we have a circle packing of $K_0$, with realisation map $\Phi_0 = \widetilde{\Phi}|_{K_0} \colon K_0 \To \R^2$. The arguments of \refsec{disc_case} above can be applied to both $K_0$ (a closed disc) and $\widetilde{K}$ (a plane / open disc). 

The corners of $K$ are naturally in bijection with corners of $K_0$, and with equivalence classes of corners of $\widetilde{K}$ under deck transformations. As $T^2$ is a Euclidean torus, corners of $\widetilde{K}$ related by deck transformations are related under $\widetilde{\Phi}$ by Euclidean translations. The angles arising in $\Phi$, $\widetilde{\Phi}$ and $\Phi_0$ are all equal at corresponding corners. 

As $K_0$ and $\widetilde{K}$ are simplicial complexes, we can denote corners of these complexes by $(\Delta, v)$ where $\Delta$ is a face and $v$ one of its vertices. We then have angles $\{\theta_{\Delta,v}\}$ in $(0, \pi)$ and positive numbers $\{m_{\Delta,v}\}$ over the corners of $K_0$ or $\widetilde{K}$, such that each $m_{\Delta,v} = \cot ( \theta_{\Delta,v}/2)$ and $\arg(m_{\Delta,v}+i) = \theta_{\Delta,v}/2$.

Thus, from a circle packing of $K$ on $T^2$, we obtain numbers $\{m_{\Delta,v}\}_{(\Delta, v) \in C(K)}$, which can be regarded as arising from $\widetilde{\Phi}$ or $\Phi_0$.

\begin{lem}
The numbers $\{m_{\Delta,v}\}_{(\Delta,v) \in C(K)}$ depend only on the conformal class of the original circle packing in $T^2$.
\end{lem}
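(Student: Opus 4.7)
The plan is to leverage the characterisation, recalled in \refsec{realisations_equivalence}, that a conformal bijection $\Psi \colon \mathcal{S} \to \mathcal{S}'$ between Euclidean tori lifts to an orientation-preserving conformal automorphism of $\R^2$, which is necessarily an orientation-preserving similarity. Similarities preserve Euclidean angles, and the $m_{\Delta,v}$ were defined via $m_{\Delta,v} = \cot(\theta_{\Delta,v}/2)$, so the invariance should reduce to checking that the angles $\theta_{\Delta,v}$ arising from the construction depend only on the conformal class.

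First I would verify that the $m_{\Delta,v}$ are well-defined independently of the auxiliary choices in the construction, namely the fundamental domain complex $K_0$ and the specific lift $\Phi_0 = \widetilde{\Phi}|_{K_0}$ chosen among the $\Z^2$-family of copies of the packing. Any two choices of such a lift differ by a deck transformation of $\widetilde{K}$, which $\widetilde{\Phi}$ realises as a Euclidean translation of $\R^2$. Since translations preserve angles, the numbers $\theta_{\Delta,v}$ at each corner are unchanged, and similarly a change of fundamental domain complex simply selects different representatives (related by translations) of each equivalence class of corners of $\widetilde{K}$. Thus the assignment $(\Delta,v) \mapsto m_{\Delta,v}$ depends only on $\widetilde{\Phi}$, and in fact only on $\Phi$.

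Next, given two conformally equivalent packings with realisations $\Phi \colon K \to \mathcal{S}$ and $\Phi' \colon K \to \mathcal{S}'$ satisfying $\Phi' = \Psi \circ \Phi$, I would lift to universal covers to obtain $\widetilde{\Phi}' = \widetilde{\Psi} \circ \widetilde{\Phi}$, where $\widetilde{\Psi}$ is an orientation-preserving similarity of $\R^2$. Restricting to a common fundamental domain complex $K_0$, the two Euclidean realisations $\Phi_0$ and $\Phi_0'$ of $K_0$ are related by the similarity $\widetilde{\Psi}$. Since similarities preserve angles at each corner, the angles $\theta_{\Delta,v}$ agree for the two packings, and hence so do the cotangents $m_{\Delta,v}$.

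The main obstacle is essentially bookkeeping: keeping track of the interaction between the choice of fundamental domain, the choice of lifts of $\Phi$ and $\Psi$, and the identification of corners of $K$ with equivalence classes of corners of $\widetilde{K}$. Once one commits to tracing a single corner $(\Delta,v) \in C(K)$ through these choices, the invariance becomes immediate from the fact that both the deck transformations of $\widetilde{K} \to K$ and the lifted conformal equivalence $\widetilde{\Psi}$ act by angle-preserving (indeed, similarity) maps of $\R^2$.
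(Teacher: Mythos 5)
Your argument is correct and is essentially the paper's own proof: both reduce to the observation that developments of conformally equivalent toroidal packings are related by a Euclidean similarity, which preserves the corner angles $\theta_{\Delta,v}$ and hence the $m_{\Delta,v} = \cot(\theta_{\Delta,v}/2)$, with the independence from the choice of fundamental domain and lift following because deck transformations are realised as translations. Your write-up is just slightly more explicit about the bookkeeping of lifts and corner identifications.
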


\begin{proof}
Suppose $\Phi, \Phi'$ are the realisation maps of two conformally equivalent toroidal circle packings of $K$. Then their developments to $\R^2$ have realisation maps $\widetilde{\Phi}, \widetilde{\Phi'} \colon \widetilde{K} \To \R^2$ which are related by a Euclidean similarity, hence have the same angles $\theta_\bullet$ in their corners.

The angles $\theta_\bullet$ of a realisation of $K$ are the same as those of a realisation of $\widetilde{K}$, hence do not depend on any of the auxiliary choices of $\mathfrak{l}, \mathfrak{m}$ or $K_0$. Thus the $m_\bullet = \cot (\theta_\bullet/2)$ do not depend on any choices.
\end{proof}

Although $\mathfrak{l}, \mathfrak{m}$ can be regarded as curves forming a basis for $H_1 (K)$, they are not normal curves (as in \refsec{circle_packing_eqns}) to which the holonomy equations apply. However, applying a small oriented pushoff, as in \reffig{fundamental_domain_complex}, we obtain smooth oriented simple closed curves $\lambda, \mu$ respectively, which are normal with respect to $K$. The homology classes of $\lambda, \mu$ still form an oriented basis of $H_1 (K)$, and the holonomy equations apply to them. More generally, the holonomy equations are satisfied for \emph{any} oriented closed normal curves $\lambda, \mu$ forming a basis for $H_1 (K)$ (as in \refdef{circle_packing_eqns_torus}), as we now see. 

\begin{lem}
\label{Lem:torus_packings_give_solutions}
Let $\lambda, \mu$ be oriented closed normal curves forming a basis for $H_1 (K)$. Then the $\{m_{\Delta,v}\}_{(\Delta,v) \in C(K)}$ satisfy the circle packing equations for $(K, \lambda, \mu)$. If the packing is unbranched, then they satisfy the unbranched circle packing equations for $(K, \lambda, \mu)$.
\end{lem}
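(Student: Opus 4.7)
The triangle, edge, and vertex equations (including their unbranched forms) for $K$ follow by lifting the packing to the universal cover $\widetilde{K}$ and invoking the disc case already established: $\widetilde{K}$ is a simplicial complex triangulating $\R^2$, and $\widetilde\Phi \colon \widetilde K \To \R^2$ is a Euclidean realisation, so \reflem{triangle_eqns}, \reflem{edge_eqns}, and \reflem{vertex_eqns} apply directly on $\widetilde{K}$. Since $\widetilde\Phi$ intertwines deck transformations of $\widetilde K$ with Euclidean translations (motions) of $\R^2$, curvatures and angles, and hence the numbers $m_{\Delta,v}$, are preserved under the deck action. Equivalent corners therefore carry the same $m$-value, and the equations on $\widetilde K$ descend to the corresponding equations on $K$.

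For the holonomy equation associated to $\lambda$, I apply \reflem{exp_Agol}(3) factor-by-factor. Since $m_\bullet > 0$ and $\delta_j^\lambda \in \{\pm 1\}$, we have $\arg(m_{\Delta_j^\lambda,c_j^\lambda} + i \delta_j^\lambda) = \delta_j^\lambda \, \theta_{\Delta_j^\lambda,c_j^\lambda}/2$, so that
\[
\arg \prod_{j=1}^{n_\lambda}\bigl(m_{\Delta_j^\lambda,c_j^\lambda} + i \delta_j^\lambda\bigr) \;=\; \tfrac{1}{2}\sum_{j=1}^{n_\lambda} \delta_j^\lambda\, \theta_{\Delta_j^\lambda,c_j^\lambda}.
\]
Thus the holonomy equation (that this product is real) is equivalent to the sum $S_\lambda := \sum_j \delta_j^\lambda\, \theta_{\Delta_j^\lambda,c_j^\lambda}$ lying in $2\pi\Z$, and the unbranched version to $S_\lambda = 0$. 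The identical equivalence holds for $\mu$. So it suffices to prove $S_\lambda \in 2\pi\Z$ in general and $S_\lambda = 0$ when the packing is unbranched.

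The plan is to interpret $S_\lambda$ as the total tangent turning, in the developed plane, of a convenient representative of $\lambda$. Within each triangle $\Delta_j^\lambda$, I will deform $\lambda_j$ to follow an arc of the circle $C_{c_j^\lambda}$ from its tangency point with one of the other two circles of $\Delta_j^\lambda$ to its tangency with the other, traversed with the orientation prescribed by $\delta_j^\lambda$. Across the shared edge between consecutive triangles, consecutive arcs meet at the shared tangency point, where the two circles share a common tangent line; a short calculation in local coordinates confirms that the direction of motion is preserved, so the resulting curve is $C^1$. The arc in $\Delta_j^\lambda$ subtends angle $\theta_{\Delta_j^\lambda,c_j^\lambda}$ at $\Phi(c_j^\lambda)$, and the tangent to a circle rotates by the subtended angle in the direction of traversal, contributing exactly $\delta_j^\lambda \theta_{\Delta_j^\lambda,c_j^\lambda}$; summing yields $S_\lambda$ as the total tangent turning of the developed deformed curve.

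To conclude, lift $\lambda$ to a path $\widetilde\lambda$ in $\widetilde K$ from a point $p_0$ to $\tau p_0$ (where $\tau$ is the deck transformation for $[\lambda]$). Since $\widetilde\Phi$ intertwines $\tau$ with a Euclidean translation of $\R^2$, the starting and ending tangent directions of the developed deformed curve coincide, forcing the tangent turning $S_\lambda$ to lie in $2\pi\Z$. When $\Phi$ is unbranched, $\widetilde\Phi$ is a homeomorphism, so the developed deformed curve is a properly embedded $C^1$ arc in $\R^2$; it is isotopic rel endpoints (with endpoint tangents held fixed) to the straight segment from $\widetilde\Phi(p_0)$ to $\widetilde\Phi(\tau p_0)$, which has turning $0$, and so $S_\lambda = 0$. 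Repeating the argument for $\mu$ gives the $\mu$-holonomy equation and completes the proof. The main obstacle is the deformation step: one must check that swapping from one circle's arc to the next across an edge produces a $C^1$ curve regardless of whether the sign $\delta_j^\lambda$ agrees with $\delta_{j+1}^\lambda$, and carefully track the orientation conventions to see that each arc contributes the signed angle $\delta_j^\lambda \theta_{\Delta_j^\lambda,c_j^\lambda}$ and nothing more.
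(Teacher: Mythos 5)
Your reduction of the holonomy equation to $S_\lambda=\sum_j\delta_j^\lambda\theta_{\Delta_j^\lambda,c_j^\lambda}\in 2\pi\Z$ via $\arg(m+i\delta)=\delta\,\theta/2$ is exactly the paper's, as is the first paragraph (lift to $\widetilde K$, apply the disc case, descend by equivariance). Where you differ is the device used to prove $S_\lambda\in 2\pi\Z$: the paper orients each edge $\widetilde{e_j}$ crossed by $\widetilde\lambda$ so that (direction of $\widetilde\lambda$, direction of $\widetilde{e_j}$) is an oriented basis, observes that the direction of $\widetilde{\Phi}(\widetilde{e_j})$ is obtained from that of $\widetilde{\Phi}(\widetilde{e_{j-1}})$ by a rotation of $\delta_j\theta_j$, and uses that $\widetilde{\Phi}(\widetilde{e_0})$ and $\widetilde{\Phi}(\widetilde{e_n})$ are parallel because they are related by the translation $\rho_\lambda$. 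You instead track the tangent of a $C^1$ curve built from arcs of the packing circles. The two computations produce the same number, and your $C^1$-matching worry does resolve: at an edge $e_j$ whose tangency point joins two distinct circles, both one-sided velocities are the unit normal to $e_j$ pointing from $\Delta_j$ into $\Delta_{j+1}$; and when $c_j=c_{j+1}$ the arcs lie on the same circle and orientability of $K$ forces $\delta_j=\delta_{j+1}$, so no reversal occurs. Your version is a pleasant continuous reformulation, at the cost of this extra verification; the paper's discrete bookkeeping avoids it.

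The genuine gap is in your unbranched step. You assert that the developed deformed curve is a properly embedded arc, but the hypotheses only require $\lambda$ to be an oriented closed normal curve representing a basis element of $H_1(K)$ — it need not be simple, and even when it is simple, two parallel normal arcs of $\lambda$ lying in the same triangle of $\widetilde K$ and cutting off the same corner are deformed onto the \emph{identical} circular arc, so the deformed curve is immersed but not embedded. For an immersed $C^1$ arc with parallel endpoint tangents the total turning is only determined mod $2\pi$, so the isotopy-to-a-segment argument does not apply and $S_\lambda=0$ does not follow. (This is a real obstruction, not a technicality: homotoping a normal curve across a vertex changes $S_\lambda$ by the angle sum at that vertex, i.e.\ by $\pm 2\pi$ in the unbranched case, so $S_\lambda=0$ genuinely depends on the choice of normal representative. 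The paper's own justification here — ``the overall rotation is $0$'' because $\widetilde\Phi$ is a homeomorphism — is equally terse, but your argument makes the missing hypothesis explicit: you need the (deformed) representative to be embedded, or some equivalent tautness condition on $\lambda$, before you can conclude the turning vanishes.) The branched conclusion $S_\lambda\in 2\pi\Z$, by contrast, needs no embeddedness and your proof of it is sound.
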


\begin{proof}
We apply the arguments of \refsec{disc_case} to the Euclidean circle packing of the open disc $\widetilde{K}$. The parameters $m_\bullet$ for $\widetilde{\Phi}$ consist of infinitely many copies of the $m_\bullet$ for $K$, and satisfy the circle packing equations for $\widetilde{K}$, which consist of infinitely many copies of the vertex, edge, and triangle equations for $K$. So these equations are satisfied for $K$, and it only remains to check the holonomy equations. 

We demonstrate the holonomy equation \refeqn{holonomy_eqns_1} for $\lambda$; $\mu$ is similar. We adopt the notation of \refsec{circle_packing_eqns}: $\lambda$ consists of $n_\lambda$ normal arcs $\lambda_j$ over $j \in \Z/n_\lambda \Z$, oriented in order of increasing $j$. To avoid awkward notation, drop unnecessary $\lambda$s from our notation, for instance writing $n$ instead of $n_\lambda$. 

The curve $\lambda$ lifts to an oriented normal curve $\widetilde{\lambda}$ in the universal $\widetilde{K}$, which is a bi-infinite sequence of normal arcs $\widetilde{\lambda_j}$ over $j \in \Z$. We number the arcs so that each $\widetilde{\lambda_j}$ in $\widetilde{K}$ covers $\lambda_j$ in $K$. In the fundamental domain $K_0$, each normal arc $\lambda_j$ over $j \in \Z/n \Z$ appears precisely once. (If we regard $K_0$ as $K$ cut along $\mathfrak{l}$ and $\mathfrak{m}$ this is clear; if we regard $K_0$ as a subset of $\widetilde{K}$, these are obtained by applying deck transformations to the $\widetilde{\lambda_j}$.)

For each $j \in \Z/n \Z$, $\lambda_j$ lies in a triangle $\Delta_j$ of $K$ and cuts off a corner $c_j$, anticlockwise or clockwise, and we set $\delta_j = 1$ or $-1$ accordingly. Let $\lambda_j$ begin at the point $p_{j-1}$ on the edge $e_{j-1}$ and end at the point $p_j$ on the edge $e_j$ of $K$. Lifting to $\widetilde{K}$, for each $j \in \Z$, $\widetilde{\lambda_j}$ lies in a triangle $\widetilde{\Delta_j}$, proceeding from the point $\widetilde{p_{j-1}}$ on the edge $\widetilde{e_{j-1}}$, to the point $\widetilde{p_j}$ on the edge $\widetilde{e_j}$ of $\widetilde{K}$, cutting off the corner $\widetilde{c_j}$. Here $\widetilde{\Delta_j}, \widetilde{p_j}$, $\widetilde{e_j}$, $\widetilde{c_j}$ cover $\Delta_j$, $p_j$, $e_j$, $c_j$ respectively.

A ``fundamental domain" of $\widetilde{\lambda}$ (which in general does not lie in the fundamental domain $K_0$ of $\widetilde{K}$) consists of $\widetilde{\lambda_{1}}, \ldots, \widetilde{\lambda_{n}}$, beginning at the point $\widetilde{p_0}$ on $\widetilde{e_0}$, and ending at the point $\widetilde{p_{n}}$ on $\widetilde{e_{n}}$. The initial and final edges $\widetilde{e_0}$ and $\widetilde{e_{n}}$ are related by the deck transformation of $\widetilde{K}$ corresponding to the class of $\lambda$ in $\pi_1 (K)$ (equivalently, $H_1 (K)$ as $K \cong T^2$). (Indeed, this deck transformation sends each $\widetilde{\lambda_j}, \widetilde{\Delta_j}, \widetilde{p_j}, \widetilde{e_j}, \widetilde{c_j}$ to $\widetilde{\lambda_{j+n}}, \widetilde{\Delta_{j+n}}, \widetilde{p_{j+n}}, \widetilde{e_{j+n}}, \widetilde{c_{j+n}}$ respectively.) 

\begin{figure}
\begin{center}
\begin{tikzpicture}[scale=2.2]
 \draw (-2,0) .. controls (-0.5,1) and (0.5,-1) .. (2,0) node[midway,rotate=-22] {\small $>$};
 \draw[dashed, ->] (-2,-0.7) -- (2,-0.7) node[midway,below] {\small $\rho_{\lambda}$};
 \filldraw (-2,0) circle (1pt) node[below left] {\small $\widetilde{p_0}$};
 \filldraw (-1.5,0.235) circle (1pt) node[below right,xshift=0.18em,yshift=0.28em] {\small $\widetilde{p_1}$};
 \filldraw (-0.7,0.24) circle (1pt) node[below right] {\small $\widetilde{p_2}$};
 \filldraw (2,0) circle (1pt) node[below right, xshift=0.5em,yshift=0.2em] {\small $\widetilde{p_n}$};
 \draw[->] (-2,0) -- (-2.2,0.35) node[above left] {\small $\widetilde{e_0}$};
 \draw (-2,0) -- (-1.8,-0.35);
 
 \draw[->] (-1.5,0.235) -- (-1.64,0.235+0.345) node[left] {\small $\widetilde{e_1}$};
 \draw (-1.5,0.235) -- (-1.36,0.235-0.345); 

 \draw[->] (-0.7,0.24) -- (-0.59,0.24+0.36) node[left] {\small $\widetilde{e_2}$};
 \draw (-0.7,0.24) -- (-0.81,0.24-0.36);
 
 \draw[->] (2,0) -- (1.8,0.35) node[above right] {\small $\widetilde{e_n}$};
 \draw (2,0) -- (2.2,-0.35);
 \node at (-1.7,0) {\small $\widetilde{\lambda_1}$};
 \node at (-1.02,0.14) {\small $\widetilde{\lambda_2}$};
 \node at (0,0.3) {\small $\widetilde{\lambda}$};
 \node at (0.75,0) {$\ldots$};
 \node at (1.7,-0.35) {\small $\widetilde{\lambda_n}$};
\end{tikzpicture}
\end{center}
 \caption{ Normal curve $\lambda$ with points $\widetilde{p_j}$ of intersection with edges $\widetilde{e_j}$ of $\widetilde{K}$.}
 \label{Fig:normal_curve_points}
\end{figure}

Under the realisation map $\widetilde{\Phi}$, $\widetilde{e_0}$ and $\widetilde{e_n}$ are related by the holonomy $\rho_\lambda$ of $\lambda$, which is a Euclidean translation as $T^2$ is a Euclidean torus. This translation $\rho_\lambda$ more generally sends each $\widetilde{\Phi}(\widetilde{e_j})$ to $\widetilde{\Phi}(\widetilde{e_{j+n}})$. We may orient each edge $\widetilde{e_j}$ so that at each intersection point $\widetilde{p_j}$, the directions of $\widetilde{\lambda}$ and $\widetilde{e_j}$ form an oriented basis of $\R^2$, as in \reffig{normal_curve_points}. Thus each oriented Euclidean line segment $\widetilde{\Phi}(\widetilde{e_j})$ points in the same direction as $\widetilde{\Phi}(\widetilde{e_{j+n}})$.

At each corner $\widetilde{c_j}$ cut off by $\widetilde{\lambda}$, the realisation $\widetilde{\Phi}$ produces an angle $\theta_{\widetilde{\Delta_j},\widetilde{c_j}}$ and parameter $m_{\widetilde{\Delta_j},\widetilde{c_j}}$, which for convenience we simply denote $\theta_j$ and $m_j$. Because the translation $\rho_\lambda$ sends each corner $\widetilde{c_j}$ to $\widetilde{c_{j+n}}$ we have $\theta_j = \theta_{j+n}$ and $m_j = m_{j+n}$ for all $j \in \Z$. In particular, $\theta_j$ and $m_j$ are well defined for each $j \in \Z/n\Z$; $\theta_j$ is the angle, and $m_j$ the corresponding parameter, of the original realisation $\Phi \colon K \To T^2$ at the corner $c_j$ of $\Delta_j$.

We observe that the direction of $\widetilde{e_j}$ is obtained from the direction of $\widetilde{e_{j-1}}$ by a rotation of $\delta_j \theta_j$, i.e. by $\pm \theta_j$ accordingly as $\widetilde{\lambda_j}$ proceeds around $\widetilde{c_j}$ anticlockwise or clockwise.

As $\widetilde{e_0}, \widetilde{e_{n}}$ have the same direction then $\sum_{j=1}^{n} \delta_j \theta_j$ is an integer multiple of $2\pi$. By \reflem{exp_Agol} then $\prod_{j=1}^{n} \left( m_j + i \right)^{\delta_j}$ has argument an integer multiple of $\pi$, as does its positive multiple $\prod_{j=1}^{n} \left( m_j + \delta_j i \right)$. Expanding out the imaginary part of this product to zero, the holonomy equation for $\lambda$ is satisfied.

If there is no branching, then by the branched disc case applied to $\widetilde{K}$, the unbranched vertex equations are satisfied. The realisation map $\widetilde{\Phi}$ is then a homeomorphism $\widetilde{K} \To \R^2$. Proceeding along $\widetilde{\lambda}$ from $\widetilde{p_0}$ through to $\widetilde{p_{n}}$, the overall rotation is $0$, rather than a general multiple of $2\pi$, so $\sum_{j=1}^{n} \delta_j \theta_j = 0$, hence $\sum_{j=1}^{n} \arg \left(m_j + i \right)^{\delta_j}$ is zero, as is $\sum_{j=1}^{n} \arg \left( m_j + \delta_j i \right)$. So the unbranched holonomy equation \refeqn{unbranched_holonomy_eqns_1} is satisfied.
\end{proof}

We have now essentially proved half of \refthm{circle_packing_eqs_general} (including the more precise \refthm{circle_packing_eqs_spherical}) and \refthm{circle_packing_eqs_general_unbranched}, showing that conformal classes of appropriate circle packings yield positive real solutions to circle packing equations.
It thus remains to prove the converse of the two main theorems, constructing unique conformal classes of circle packings from solutions to the equations.

\section{From solutions to circle packings}
\label{Sec:solutions_to_packings}

In this section we show that solutions to the circle packing equations describe circle packings. We consider the various possibilities for the complex $K$ in turn: discs in \refsec{closed_disc} and \refsec{discs_in_general}, spheres in \refsec{spheres}, and tori in \refsec{tori}. Note that in the disc case, $K$ is a finite complex precisely when it triangulates a closed disc, and infinite precisely when it triangulates an open or semi-open disc. We deal with the finite case in \refsec{closed_disc} and infinite case in \refsec{discs_in_general}.

\subsection{Finite discs}
\label{Sec:closed_disc}

 Suppose $K$ is a finite simplicial complex triangulating an oriented closed disc. Let $\{m_{\Delta,v}\}_{(\Delta,v) \in C(K)}$ be a solution of the circle packing equations for $K$, with all $m_{\Delta,v} > 0$. We will show that there is a Euclidean circle packing of $K$ with realisation $\Phi \colon K \to \R^2$, curvatures $\{\kappa_v\}_{v \in V(K)}$ and angles $\{\theta_{\Delta,v}\}_{(\Delta,v) \in C(K)}$, such that each $m_{\Delta,v}$ satisfies
\begin{equation}
\label{Eqn:desired_relations}
m_{\Delta,v} = \cot \left( \frac{\theta_{\Delta,v}}{2} \right)
= \frac{\sqrt{\kappa_u \kappa_v + \kappa_v \kappa_w + \kappa_w \kappa_u}}{\kappa_v},
\end{equation}
where $u,v,w$ are the vertices of $\Delta$. Moreover, we will show that this circle packing is unique up to Euclidean similarities. We will further show that if the $m_{\Delta,v}$ satisfy the unbranched circle packing equations, then this circle packing is unbranched.

Given the solution $\{m_{\Delta,v}\}_{(\Delta,v) \in C(K)}$, we can define angles $\{\theta_{\Delta,v}\}_{(\Delta,v) \in C(K)}$ by \refeqn{desired_relations}, with all $\theta_{\Delta,v} \in (0, \pi)$. For now they are just numbers derived from solutions to circle packing equations, but we will show they are realised as the angles in $\Phi$.

Let $N$ be the number of triangles in $K$. Our proof is by induction on $N$, using an idea similar to \cite[section 6.3]{StephensonKenneth2005Itcp}.
\begin{lem}
\label{Lem:build_triangulation}
If $N \geq 2$, then at least one of the following occurs.
\begin{enumerate}
\item 
There is an interior edge $e$ of $K$, both of whose endpoints are on the boundary of $K$. 
\item 
There is a triangle $\Delta$ of $K$ which contains a boundary edge of $K$ and an interior vertex of $K$.
\end{enumerate}
\end{lem}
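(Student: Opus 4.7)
The plan is to argue by contradiction: assume (1) fails and deduce (2). First I would pick a triangle $\Delta$ of $K$ incident to a boundary edge. Such a triangle exists because $\partial K$ is a topological circle made of at least one edge of $K$, and each boundary edge lies in exactly one triangle of $K$. Call one such boundary edge $e_b$, contained in $\Delta$.

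The next step is to argue that $\Delta$ must have at least one interior edge. Suppose for contradiction that all three edges of $\Delta$ are boundary edges. Since $K$ is simplicial, the three edges of $\Delta$ are distinct and share their endpoints only at the three distinct vertices of $\Delta$; together they form a simple closed cycle in the boundary graph. But $\partial K$ is itself a single simple closed curve (as $K$ triangulates a closed disc), so this cycle must be all of $\partial K$. Then $\Delta$ itself would fill the entire disc and no other triangle could be glued to any edge of $\Delta$ (they are all on the boundary), forcing $K=\Delta$ and $N=1$, contradicting $N\geq 2$. Hence $\Delta$ has an interior edge $e$.

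Now I invoke the failure of (1): by hypothesis, no interior edge of $K$ has both endpoints on $\partial K$, so the interior edge $e$ has at least one endpoint $v$ that is an interior vertex of $K$. Because $e$ is an edge of $\Delta$, the vertex $v$ is a vertex of $\Delta$. Thus $\Delta$ contains both the boundary edge $e_b$ and the interior vertex $v$, which is precisely condition (2).

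The only subtlety I anticipate is the step that rules out the case of $\Delta$ having three boundary edges: one needs to use both the simpliciality of $K$ (so that the three edges and three vertices of $\Delta$ really are distinct and form a cycle in the 1-skeleton of $\partial K$) and the topological fact that $\partial K$ is a single simple closed curve, so no additional triangles can exist once $\Delta$'s boundary coincides with $\partial K$. Everything else is a straightforward matter of tracing the logical negation of (1).
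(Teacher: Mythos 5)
Your proof is correct and follows essentially the same route as the paper: assume (1) fails, take a triangle adjacent to a boundary edge, use $N\geq 2$ to find an interior edge of that triangle, and conclude from the negation of (1) that this edge contributes an interior vertex, giving (2). The only cosmetic difference is that you spell out explicitly why not all three edges of $\Delta$ can be boundary edges, which the paper leaves implicit in the phrase ``As $N \geq 2$, at least one of the edges $uv$, $uw$ is an interior edge.''
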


\begin{proof}
Suppose that (1) never arises. Then every interior edge of $K$ has at least one endpoint which is an interior vertex. Take an arbitrary boundary edge of $K$, with endpoints $v,w$, let $\Delta$ be the adjacent triangle, and let the vertices of $\Delta$ be $u,v,w$. As $N \geq 2$, at least one of the edges $uv$, $uw$ is an interior edge of $K$. If $u \in \partial K$ then $\Delta$ has an edge which is interior to $K$ and has both endpoints on the boundary of $K$, contradicting our assumption. Thus $u$ is an interior vertex, and $\Delta$ satisfies (2).
\end{proof}

In case (1) of \reflem{build_triangulation}, the edge $e$ splits $K$ into two smaller complexes $K_1, K_2$, which are both simplicial complexes triangulating oriented discs, and $K$ is obtained by gluing $K_1$ and $K_2$ along $e$. In case (2), removing $\Delta$ yields a smaller complex $K'$, and $K$ is obtained from $K'$ by gluing the triangle $\Delta$ along two consecutive boundary edges of $K'$. See \reffig{build_triangulation}.

\begin{figure}
\begin{center}
\begin{tabularx}{0.8\linewidth} { 
 >{\centering\arraybackslash}X 
 >{\centering\arraybackslash}X 
 >{\centering\arraybackslash}X }
\begin{tikzpicture}
 \node[draw, minimum size=3.5cm,regular polygon,regular polygon sides=14] (a) {};
 \foreach \x in {1,2,...,14}
 \fill (a.corner \x) circle[radius=1pt];
 \draw (a.corner 12)--(a.corner 5);
 \node[above] at (0,0) {$e$};
 \node[above] at (0,0.8) {$K_1$};
 \node[below] at (0,-0.8) {$K_2$};
\end{tikzpicture} &
\begin{tikzpicture}
 \node[rotate=10, minimum size=3.5cm,regular polygon,regular polygon sides=9] (a) {};
 \foreach \x in {1,2,...,9}
 \fill (a.corner \x) circle[radius=1pt];
 \foreach \y [count=\z from 2] in {1,2,...,9}
 \draw (a.corner \y)--(a.corner \z);
 \draw (0.5,0.1)--(a.corner 8);
 \draw (0.5,0.1)--(a.corner 7);
 \node[right] at (0.9,0.1) {$\Delta$};
 \node at (0,0) {$K'$};
\end{tikzpicture}
\end{tabularx}
\end{center}
 \caption{Cases (1) and (2) of \reflem{build_triangulation}.}
 \label{Fig:build_triangulation}
\end{figure}

\begin{proof}[Proof of \refthm{circle_packing_eqs_general} and \refprop{m_meaning} when $K$ is a finite disc]
If $N=1$ then $K$ consists of a single triangle, so there are precisely $3$ variables $m_1, m_2, m_3$ and a single circle packing equation, that for a single triangle. Let $\{m_1, m_2, m_3\}$ be a solution with all $m_j > 0$, and let $\theta_1, \theta_2, \theta_3 \in (0, \pi)$ be defined by \refeqn{desired_relations}, i.e. $m_j = \cot(\theta_j/2)$. As $m_1, m_2, m_3$ satisfy the triangle equation, by \reflem{triangle_equation_angle_sum} then $\theta_1 + \theta_2 + \theta_3 = \pi$. So $\theta_1, \theta_2, \theta_3$ are the angles of a Euclidean triangle. Let $\Phi \colon K \To \R^2$ realise this triangle, and draw its Soddy circles, with curvatures $\kappa_1, \kappa_2, \kappa_3$. By \reflem{edge_eqn_fact}, our solution $m_\bullet$ coincides with the parameters $m_\bullet$ as defined in \refsec{Euclidean_triangles} by $m_j = \sqrt{L_\Delta}/{\kappa_j}$ where $L_\Delta = \kappa_1 \kappa_2 + \kappa_2 \kappa_3 + \kappa_3 \kappa_1$. Thus we have a circle packing as claimed, well defined up to similarity, and \refeqn{desired_relations} holds. Indeed, any $\Phi$ realising this solution must have angles $\theta_1, \theta_2, \theta_3$ and so the realisation is unique up to similarity.

Now suppose $N \geq 2$, so \reflem{build_triangulation} applies and we consider the two cases in turn. Let $\{m_{\Delta,v}\}_{(\Delta,v) \in C(K)}$ be a solution to the circle packing equations for $K$ with all $m_{\Delta,v}>0$, and let $\{\theta_{\Delta,v}\}_{(\Delta,v) \in C(K)}$ be defined by \refeqn{desired_relations}, i.e. $m_{\Delta,v} = \cot(\theta_{\Delta,v}/2)$, with all $\theta_{\Delta,v} \in (0, \pi)$.

Suppose $K$ is obtained from two smaller complexes $K_1, K_2$, by gluing along an edge $e$. Then the circle packing equations for $K$ consist of those for $K_1$ and $K_2$, with no new variables, and one new edge equation. Letting the triangles adjacent to $e$ in $K_1, K_2$ be $\Delta_1, \Delta_2$, and the endpoints of $e$ be $v_1, v_2$, the situation is as in \reffig{circle_packing_eqn_vars} (centre) and the new edge equation is $m_{\Delta_1,v_1} m_{\Delta_2,v_2} = m_{\Delta_1,v_2} m_{\Delta_2,v_1}$. So the solution $\{m_{\Delta,v}\}_{(\Delta, v) \in C(K)}$ of the circle packing equations for $K$ contains solutions $\{m_{\Delta,v}\}_{(\Delta,v) \in C(K_1)}$ and $\{m_{\Delta,v}\}_{(\Delta,v) \in C(K_2)}$ of the circle packing equations for $K_1$ and $K_2$. By induction, these yield realisations $\Phi_1 \colon K_1 \To \R^2$ and $\Phi_2 \colon K_2 \To \R^2$, unique up to similarity, with circle radii $\{r_v^1\}_{v \in V(K_1)}$ and $\{r_v^2\}_{v \in V(K_2)}$ and curvatures $\{\kappa_v^1\}_{v \in V(K_1)}$ and $\{\kappa_v^2\}_{v \in V(K_2)}$ respectively, such that the relations of \refeqn{desired_relations} hold and the $\theta_{\Delta,v}$ are realised as the angles arising in $\Phi_1$ and $\Phi_2$.

In $\Phi_1$, the circles corresponding to the endpoints $v_1, v_2$ of $e$ have radii $r_{v_1}^1, r_{v_2}^1$ and curvatures $\kappa_{v_1}^1, \kappa_{v_2}^1$ respectively, so the length of $e$ is $r_{v_1}^1 + r_{v_2}^1$. Note also that by \reflem{edge_eqn_fact}
\[
\frac{m_{\Delta_1,v_1}}{m_{\Delta_1,v_2}} 
= \frac{\kappa_{v_2}^1}{\kappa_{v_1}^1}
= \frac{r_{v_1}^1}{r_{v_2}^1}. 
\]
Similarly in $\Phi_2$, the circles corresponding to $v_1, v_2$ have radii $r_{v_1}^2, r_{v_2}^2$ and curvatures $\kappa_{v_1}^2, \kappa_{v_2}^2$ respectively, so the length of $e$ is $r_{v_1}^2 + r_{v_2}^2$, and
\[
\frac{m_{\Delta_2,v_1}}{m_{\Delta_2,v_2}}
= \frac{\kappa_{v_2}^2}{\kappa_{v_1}^2}
= \frac{r_{v_1}^2}{r_{v_2}^2}.
\]
By the new edge equation $m_{\Delta_1,v_1} m_{\Delta_2,v_2} = m_{\Delta_1,v_2} m_{\Delta_2,v_1}$ we thus have
\[
\frac{m_{\Delta_1,v_1}}{m_{\Delta_1,v_2}} = \frac{m_{\Delta_2,v_1}}{m_{\Delta_2,v_2}}, 
\quad \text{i.e.} \quad
\frac{\kappa_{v_1}^1}{\kappa_{v_2}^1} = \frac{\kappa_{v_1}^2}{\kappa_{v_2}^2}
\quad \text{and} \quad
\frac{r_{v_1}^1}{r_{v_2}^1} = \frac{r_{v_1}^2}{r_{v_2}^2}.
\]

Thus the circles about vertices $v_1, v_2$ in $\Phi_1$ and in $\Phi_2$ are in the same ratio. Hence there is a unique dilation of $\Phi_1$ after which their curvatures are equal, $\kappa_{v_1}^1 = \kappa_{v_1}^2$ and $\kappa_{v_2}^1 = \kappa_{v_2}^2$, as are their radii, $r_{v_1}^1 = r_{v_1}^2$ and $r_{v_2}^1 = r_{v_2}^2$. Then the side lengths of $e$ in $\Phi_1$ and $\Phi_2$ are equal, $r_{v_1}^1 + r_{v_2}^1 = r_{v_1}^2 + r_{v_2}^2$. Adjusting $\Phi_1$ by a further orientation-preserving isometry, we may then glue $\Phi_1$ and $\Phi_2$ together along $e$ to obtain an realisation $\Phi$ of $K$. In this realisation, the circles at $v_1, v_2$ from $\Phi_1, \Phi_2$ are identified. Since we only adjusted the radii and curvatures of $\Phi_1$ by an overall dilation, the variables $\theta_{\Delta,v}$ are still realised as angles. Since the ratios of curvatures in any individual triangle remain unchanged, the relations \refeqn{desired_relations} continue to hold. Since $\Phi_1$ and $\Phi_2$ are unique up to similarity, their gluing $\Phi$ along $e$ is also unique up to similarity.

Finally, suppose that $K$ is obtained from a smaller complex $K'$ by gluing a triangle $T$ along two consecutive boundary edges $e_1, e_2$ of $K'$, with endpoints $u_1, w$ and $u_2, w$ respectively. The circle packing equations for $K$ then consist of those for $K'$ and those for the single triangle $T$, with no new variables, one new vertex equation at $w$, and two new edge equations for $e_1$ and $e_2$.
The solution $\{m_{\Delta,v}\}_{(\Delta,v) \in C(K)}$ yields solutions $\{m_{\Delta,v}\}_{(\Delta,v) \in C(K')}$ and $\{m_{\Delta,v}\}_{(\Delta,v) \in C(T)}$ of the circle packings for $K'$ and $T$. By induction, we obtain realisations $\Phi' \colon K' \To \R^2$ and $\Phi^T \colon T \To \R^2$, unique up to similarity, with circle radii $\{r'_v\}_{v \in V(K')}$ and $\{r^T_v\}_{v \in V(T)}$ and curvatures $\{\kappa'_v\}_{v \in V(K')}$ and $\{\kappa^T_v\}_{v \in V(T)}$, such that the relations of \refeqn{desired_relations} hold and the $\theta_{\Delta,v}$ are realised as angles.

At $w$, we have at least one corner of $K'$, and the corner $(T, w)$ of $T$. As the relations \refeqn{desired_relations} hold, by \reflem{vertex_equation_angle_sum} the vertex equation at $w$ implies that $\theta_{T,w}$ sums with the $\theta_{\Delta,w}$ in corners of $K'$ at $w$ to give a positive integer multiple of $2\pi$. As the $\theta_{\Delta,v}$ are realised as angles in $\Phi'$ and $\Phi^T$, then $\theta_{T,w}$ is just the right angle for $\Phi^T$ to slot into $\Phi'$ at $w$.

In $\Phi'$, the edges $e_1, e_2$ have lengths $r'_{u_1} + r'_v$ and $r'_{u_2} + r'_v$ respectively. As in the previous case, the edge equation for $e_1$ implies that
\[
\frac{\kappa'_{u_1}}{\kappa'_w} = \frac{\kappa^T_{u_1}}{\kappa^T_w}
\quad \text{and} \quad
\frac{r'_{u_1}}{r'_w} = \frac{r^T_{u_1}}{r^T_w}.
\]
Similarly, the edge equation for $e_2$ implies that
\[
\frac{\kappa'_{u_2}}{\kappa'_w} = \frac{\kappa^T_{u_2}}{\kappa^T_w}
\quad \text{and} \quad
\frac{r'_{u_2}}{r'_v} = \frac{r^T_{u_2}}{r^T_v}.
\]
Thus we have equalities of ratios
\[
\left[ \kappa'_{u_1} : \kappa'_w : \kappa'_{u_2} \right]
= \left[ \kappa^T_{u_1} : \kappa^T_w : \kappa^T_{u_2} \right]
\quad \text{and} \quad
\left[ r'_{u_1} : r'_w : r'_{u_2} \right]
= \left[ r^T_{u_1} : r^T_w : r^T_{u_2} \right],
\]
so the circles at $u_1, w, u_2$ in $\Phi'$ and $\Phi^T$ are in the same ratio. As in the previous case, there is then a unique dilation of $\Phi^T$ after which $(\kappa^T_{u_1}, \kappa^T_w, \kappa^T_{u_2}) = (\kappa'_{u_1}, \kappa'_w, \kappa'_{v_2})$ and $(r^T_{u_1}, r^T_w, r^T_{u_2}) = (r'_{u_1}, r'_w, r'_{u_2})$. The side lengths of $e_1$ and $e_2$ then agree in $\Phi'$ and $\Phi^T$, so we may then glue $\Phi'$ and $\Phi^T$ together to obtain a realisation $\Phi$ of $K$, in which the circles at $u_1, u_2$ and $w$ are identified. As in the previous case, the variables $\theta_{\Delta,v}$ are still realised as angles, the relations \refeqn{desired_relations} continue to hold, and $\Phi$ is unique up to similarity.
\end{proof}

\begin{proof}[Proof of \refthm{circle_packing_eqs_general_unbranched} when $K$ is a finite disc]
The proof is identical to above, except that in the final case, when gluing around the vertex $w$, the unbranched vertex equation $\sum \arg (m_\bullet + i) = \pi$ is equivalent to $\sum \theta_\bullet = 2\pi$, since $m_\bullet = \cot(\theta_\bullet/2)$. Thus the angle sum around $w$ is exactly $2\pi$, and $\Phi'$ and $\Phi^T$ slot together without branching.
\end{proof}

Note that the above argument still works if we allow triangles with $m_{\Delta,v}$ negative, in which case the angles $\theta_{\Delta,v}$ are also negative, and we regard those triangles as negatively oriented. The realisation $\Phi$ then has fold singularities along edges between triangles of opposite orientation.

We also observe that the arguments above rely crucially on $K$ being simply connected, so that circle packings can be built over larger and larger discs. If $K$ were not simply connected, we would have to verify that $\Phi$ glues triangles appropriately around non-contractible curves, and this will not be true in general. We will see in \refsec{tori} that holonomy equations guarantee this in the torus case.

\subsection{ Infinite discs }
\label{Sec:discs_in_general}

Suppose more generally now that $K$ is an infinite simplicial complex triangulating an open or semi-open disc. As $K$ is simplicial, we denote corners by $(\Delta,v)$, where $\Delta \in F(K)$ and $v$ is a vertex of $\Delta$. As $K$ is a surface, its set of triangles is countable, and every interior vertex has finite degree. We may take a sequence of finite simplicial sub-complexes
\[
K_1 \subset K_2 \subset K_3 \subset \cdots
\]
such that for each positive integer $n$, $K_n$ triangulates a closed disc, and $\bigcup_{n=1}^\infty K_n = K$. Note that whenever we have positive integers $n < n'$, not only do we have inclusions of complexes $K_n \subset K_{n'} \subset K$ but also inclusions of sets $V(K_n) \subseteq V(K_{n'}) \subset V(K)$, $F(K_n) \subseteq F(K_{n'}) \subset F(K)$ and $C(K_n) \subseteq C(K_{n'}) \subset C(K)$. Moreover, every interior vertex of $K$ is also an interior vertex of $K_n$ for sufficiently large $n$.

Let $\{m_{\Delta,v}\}_{(\Delta,v) \in C(K)}$ be a solution to the circle packing equations for $K$, with all $m_{\Delta,v} > 0$. Note this set of numbers is infinite, as is the set of circle packing equations, but each equation is a polynomial equation, involving only finitely many variables. We will show there is a Euclidean circle packing of $K$ with a realisation map $\Phi \colon K \To \R^2$, unique up to Euclidean similarity, with curvatures $\{\kappa_v\}_{v \in V(K)}$, and angles $\{\theta_{\Delta,v}\}_{(\Delta,v) \in C(K)}$ satisfying \refeqn{desired_relations}.

\begin{proof}[Proof of \refthm{circle_packing_eqs_general} and \refprop{m_meaning} when $K$ is an infinite disc]
For each positive integer $n$, the circle packing equations of $K_n$ are a finite subset of the circle packing equations of $K$ in a finite subset of the variables $m_{\Delta,v}$. Therefore the subset $\{m_{\Delta,v}\}_{(\Delta,v) \in C(K_n)}$ of $\{m_{\Delta,v}\}_{(\Delta,v) \in C(K)}$ satisfy the circle packing equations of $K_n$. By the arguments of \refsec{closed_disc} there exists a circle packing for $K_n$, with a realisation map $\Phi_n \colon K_n \To \R^2$, unique up to Euclidean similarity, such that the curvatures and angles of $\Phi_n$ (and the parameters $\{m_{\Delta,v}\}_{(\Delta,v) \in C(K_n)}$) satisfy \refeqn{desired_relations}.

When $n < n'$, the restriction of the circle packing of $K_{n'}$ to $K_n$ has the same parameters $m_{\Delta,v}$ as the circle packing for $K_n$, so again by \refsec{closed_disc} these two circle packings agree, up to a Euclidean similarity. Thus $\Phi_{n'}|_{K_n}$ agrees with $\Phi_n$, up to a Euclidean similarity.

Starting from the circle packing of $K_1$ and successively adjusting the circle packings of $K_2, K_3, \ldots$, or equivalently each $\Phi_2, \Phi_3, \ldots$, by a Euclidean similarity, we can arrange that for all $n < n'$, the circle packing of $K_{n}$ is a subset of the circle packing of $K_{n'}$. Equivalently, we have $\Phi_{n'} |_{K_n} = \Phi_n$, and can write 
\[
\Phi_1 \subset \Phi_2 \subset \Phi_3 \subset \cdots.
\]
There is then a well-defined map $\Phi \colon K \To \R^2$ given by $\Phi = \bigcup_{n=1}^\infty \Phi_n$, the realisation of a circle packing for $K$ obtained as the union of the circle packings for the $K_n$. Let the curvatures of the circles in this circle packing be $\{\kappa_v\}_{v \in V(K)}$, and let the angles in corners given by $\Phi$ be $\{\theta_{\Delta,v}\}_{(\Delta,v) \in C(K)}$.

Consider a vertex $v$ and corner $(\Delta, v)$ of $K$. These are also a vertex and corner of some $K_n$. Since $\Phi_n = \Phi|_{K_n}$, the circle packings of $K_n$ and $K$ have the same angle $\theta_{\Delta,v}$ at this corner, and the same circle curvature $\kappa_v$. Again by \refsec{closed_disc}, the parameters $m_{\Delta,v}$, angles $\theta_{\Delta,v}$ and curvatures $\kappa_v$ satisfy \refeqn{desired_relations}. Thus $\Phi$ has the desired properties.

To see that $\Phi$ is unique up to conformal equivalence, let $\Phi, \Phi' \colon K \To \R^2$ be two realisations of circle packings of $K$ with the desired properties. For each positive integer $n$, let $\Phi_n = \Phi|_{K_n}$ and $\Phi'_n = \Phi'|_{K_n}$. Then $\Phi_n, \Phi'_n$ are realisations of circle packings of $K_n$ with the same parameters $\{m_{\Delta,v}\}_{(\Delta,v) \in C(K_n)}$. 
By uniqueness in the finite case, there exists a Euclidean similarity $\Psi_n$ such that $\Phi'_n = \Psi_n \circ \Phi_n$. But for two circle packings each involving at least three circles, if there is a similarity relating them, that similarity is unique. Since the $\Phi_n$ and $\Phi'_n$ form ascending series of circle packings, the similarities $\Psi_n$ are all equal, independent of $n$. Denoting this similarity by $\Psi$, we have $\Phi'_n = \Psi \circ \Phi_n$ for all $n$, hence $\Phi' = \Psi \circ \Phi$, so the two circle packings of $K$ are conformally equivalent.
\end{proof}

\begin{proof}[Proof of \refthm{circle_packing_eqs_general_unbranched} when $K$ is an infinite disc]
When $\{m_{\Delta,v}\}_{(\Delta,v) \in C(K)}$ also satisfy the unbranched vertex equations, by the closed disc case, each $\Phi_n$ is unbranched at each interior vertex of $K_n$. For each interior vertex $v$ of $K$, $v$ is also an interior vertex of some $K_n$, so $\Phi_n$ is unbranched at $v$, and hence $\Phi = \bigcup_{n=1}^\infty \Phi_n$ is also unbranched at $v$.
\end{proof}

\subsection{Spheres}
\label{Sec:spheres}

Suppose now $K$ is a simplicial complex triangulating a sphere. As in \refsec{circle_packing_eqns} and \refsec{spherical_case_eqns} let $\Delta_0$ be a triangle of $K$, and let $K_0$ be the finite simplicial complex triangulating an oriented closed disc obtained by removing the interior of $\Delta_0$.

Let $e_0$ be an edge of $\Delta_0$. We then have three sets of circle packing equations: the reduced circle packing equations for $(K, \Delta_0, e_0)$, which we denote $\mathcal{E}_{red}$ (\refdef{reduced_eqns_S2}); the full set of circle packing equations for $(K, \Delta_0)$, which we denote $\mathcal{E}$ (\refdef{circle_packing_eqns_sphere}); and the circle packing equations for $K_0$, which we denote $\mathcal{E}_0$ (\refdef{circle_packing_eqns_disc}). We regard the ``variables" in the corners of $\Delta_0$ as constants $-\sqrt{3}$, that is, not as variables at all. The sphere-closing equations \refeqn{sphere-closing_eqn_list} are then satisfied automatically, as is the triangle equation \refeqn{triangle_eqn_list} in $\Delta_0$. The sets of equations $\mathcal{E}_0, \mathcal{E}_{red}, \mathcal{E}$ then all have the same variables, corresponding to the corners of $K_0$, and we do not regard them as containing sphere-closing equations, or the triangle equation for $\Delta_0$. Thus 
\[
\mathcal{E}_0 \subset \mathcal{E}_{red} \subset \mathcal{E}.
\]
As in \refsec{spherical_case_eqns}, denote the vertices of $\partial \Delta_0$ in order as $v_0, v_1, v_2$. Let $e_j$ be the edge of $\Delta_0$ opposite $v_j$. We may choose these labels so that $e_0$ is the distinguished edge in $\mathcal{E}_{red}$. Then $\mathcal{E}_{red}$ is obtained from $\mathcal{E}_0$ by adding the edge equations for $e_1$ and $e_2$; and $\mathcal{E}$ is obtained by adding the final edge equation at $e_0$, and $3$ vertex equations around $\Delta_0$.

\begin{proof}[Proof of \refprop{circle_packing_reduction}]
Clearly any solution of $\mathcal{E}$ also satisfies the subset of equations $\mathcal{E}_{red}$. So we take a solution $\{m_{\Delta,v}\}_{(\Delta,v) \in C(K_0)}$ of $\mathcal{E}_{red}$ and show it satisfies the equations of $\mathcal{E}$.

\begin{figure}[!h]
 \centering
 \begin{tikzpicture}[scale=2]
 \coordinate (A) at (0,1.1);
 \coordinate (B) at (-1.3,-0.9); 
 \coordinate (C) at (1.6,-0.9);
 \coordinate (O) at (0,0);
 \draw[thick] (A) node[above] {$v_0$};
 \draw[thick] (B) node[left] {$v_1$};
 \draw[thick] (C) node[right] {$v_2$};
				\draw[thick] (A) -- node[midway, above left] {$e_2$} (B);
 \draw[thick] (B) -- node[midway, below] {$e_0$} (C);
 \draw[thick] (C) -- node[midway, above right] {$e_1$} (A);
				\draw[thick] (A) -- (-0.04,0.6);
				\draw[thick] (A) -- (0.04,0.6);
 \pic [draw, ->, "$m_{2,0}$", draw=none, angle eccentricity=3] {angle = B--A--O};
				\pic [draw, ->, "$m_{1,0}$", draw=none, angle eccentricity=3] {angle = O--A--C};
				\draw[thick] (B) -- (-0.75,-0.55);
				\draw[thick] (B) -- (-0.7,-0.6);
				\draw (-0.7, -0.4) node {$m_{2,1}$};
				\draw (-0.5,-0.8) node {$m_{0,1}$};
				\draw[thick] (C) -- (1.05,-0.55);
				\draw[thick] (C) -- (1,-0.6);
				\draw (0.96,-0.4) node {$m_{1,2}$};
				\draw (0.8,-0.8) node {$m_{0,2}$};
 \end{tikzpicture}
 \caption{Labels near $\Delta_0$ in $K_0$. Note the vertices $v_j$ may have arbitrarily high degree, and if $v_j$ has degree $2$ then two labels $m_{\bullet,j}$ coincide.}
 \label{Fig:labels_near_D0}
\end{figure}

Since $\mathcal{E}_0 \subset \mathcal{E}_{red}$, these $m_\bullet$ form a solution of the circle packing equations for $K_0$, hence by \refsec{closed_disc} 
yield a Euclidean circle packing of $K_0$, with realisation $\Phi \colon K_0 \To \R^2$ and circle curvatures $\{\kappa_v\}_{v \in V(K_0)}$. Denote by $m_{i,j}$ the number in the triangle of $K_0$ with edge $e_i$ at vertex $v_j$, as in \reffig{labels_near_D0}. 
The three edge equations along $\Delta_0$ are then
\begin{equation}
\label{Eqn:edge_eqns_around_D0}
-\sqrt{3} \; m_{0,1} = - \sqrt{3} \; m_{0,2}, \quad
-\sqrt{3} \; m_{1,2} = - \sqrt{3} \; m_{1,0}, \quad
-\sqrt{3} \; m_{2,0} = - \sqrt{3} \; m_{2,1},
\end{equation}
all of which are in $\mathcal{E}$, but only the latter two are in $\mathcal{E}_{red}$. By \refprop{m_meaning} and \reflem{edge_eqn_fact} in the closed disc case, these equations give ratios of the curvatures $\kappa_{v_j}$, and are respectively equivalent to
\begin{equation}
\label{Eqn:curvatures_equal_around_D0}
\frac{\kappa_{v_2}}{\kappa_{v_1}} = 1, \quad
\frac{\kappa_{v_0}}{\kappa_{v_2}} = 1, \quad
\frac{\kappa_{v_1}}{\kappa_{v_0}} = 1. \quad
\end{equation}
Any two of these equations imply the third, hence the $m_\bullet$ satisfy all three of them. Moreover, $\kappa_{v_0} = \kappa_{v_1} = \kappa_{v_2}$, so the three circles at the $v_j$ in the circle packing of $K_0$ realising the $m_\bullet$ are isometric, and their centres form an equilateral triangle.

Thus, for each vertex $v_j$ of $\Delta_0$, the variables $m_{\Delta,v_j}$ at corners of triangles $\Delta$ of $K_0$ at $v_j$ have associated angles $\theta_{\Delta,v_j}$ summing to $\pi/3$. By \refprop{m_meaning} in the closed disc case, each $m_\bullet = \cot (\theta_\bullet/2)$. By \reflem{exp_Agol}, each $m_\bullet + i$ has argument $\theta_\bullet / 2$. Hence 
\[
\arg \prod_{\Delta} \left( m_{\Delta,v_j} + i \right) = \frac{\pi}{6}.
\]
The constant $m_{v_j,\Delta_0} = -\sqrt{3}$ imposed by the sphere-closing equation then yields $m_{v_j,\Delta_0} + i = - \sqrt{3} + i$, which has argument $5\pi/6$. We thus obtain
\[
\Im \left( - \sqrt{3} + i \right) \prod_{\Delta} \left( m_{\Delta,v_j} + i \right) = 0,
\]
which is precisely the vertex equation at $v_j$ in $\mathcal{E}$. Hence the $\{m_{\Delta,v}\}$ satisfy all the equations of $\mathcal{E}$ as desired.
\end{proof}

The choice of constants $-\sqrt{3}$ is not necessary. The above argument works with any choice of negative constants satisfying the triangle equation, but equal constants save some effort.

We can now prove the main theorem in the spherical case. 
In \refsec{spherical_case_eqns}, given a spherical circle packing of $K$ good for $\Delta_0$, by removing $\Delta_0$, applying a M\"{o}bius transformation $M$ to $S^2$, then applying stereographic projection $\mathcal{S}$, we obtained a circle packing of $\overline{K_0}$ 
in $\R^2$ whose realisation has image consisting of an equilateral triangle centred at the origin and its interior. From this we obtained parameters $\{m_{\Delta,v}\}$ over $(\Delta,v) \in C(K)$ satisfying the circle packing equations for $(K,\Delta_0)$. Moreover we showed that these $m_\bullet$ depended only on the conformal class of the circle packing. 
It remains to show that this process can be reversed, constructing circle packings given positive real solutions of the circle packing equations for $(K, \Delta_0)$, to provide a bijection between these solutions and conformal classes of spherical circle packings realising $K$ which are good for $\Delta_0$. 

\begin{proof}[Proof of \refthm{circle_packing_eqs_general}, \refthm{circle_packing_eqs_spherical} and \refthm{circle_packing_eqs_general_unbranched} when $K \cong S^2$]
As $C(\overline{K_0}) \subset C(K)$ and the circle packing equations of $\overline{K_0}$ are a subset of those for $K$ (we identify the corners of $K_0$ and $\overline{K_0}$; the circle packing equations do not depend on the orientation of the complex), the given numbers $\{m_{\Delta,v}\}_{(\Delta,v) \in C(\overline{K_0})}$ solve the circle packing equations for $\overline{K_0}$. Thus by \refsec{closed_disc}, there is a circle packing of $\overline{K_0}$ in the Euclidean plane $\R^2$, with orientation-preserving realisation map $\overline{\Phi} \colon \overline{K_0} \To \R^2$, whose angles and curvatures are related to the $m_{\Delta,v}$ by \refeqn{desired_relations}. 

The boundary of $\overline{K_0}$ is a triangle and we consider its image under $\overline{\Phi}$. We adopt the notation of \reffig{labels_near_D0} and the proof of \refprop{circle_packing_reduction} for edges, vertices and parameters near $\Delta_0$. The $m_\bullet$ satisfy the edge equations \refeqn{edge_eqns_around_D0} around $\Delta_0$, so as in the proof of \refprop{circle_packing_reduction} we have \refeqn{curvatures_equal_around_D0}. Hence the circles at $v_0, v_1, v_2$ are all isometric, $\overline{\Phi}$ maps $\partial \overline{K_0}$ to an equilateral triangle, and the image of $\overline{\Phi}$ consists of this triangle and its interior.

Applying a Euclidean translation, we can assume this triangle has its centroid at the origin in $\R^2$. Applying stereographic projection $\mathcal{S}^{-1} \colon \R^2 \To S^2 \setminus \{N\}$, we obtain a circle packing of $K_0$ (as $\mathcal{S}$ reverses orientation) in $S^2$ with realisation $\Phi_0 \colon K_0 \To S^2$ whose image is a geodesic triangle of $S^2$ and its interior, avoiding the north pole $N$. Denoting its circles by $\{C_v\}_{v \in V(K_0)}$, each $C_v$ bounds a disc containing $\Phi_0 (v)$ disjoint from $N$.

The circles of this packing in fact yield a spherical circle packing of $K$, with realisation $\Phi \colon K \To S^2$ extending $\Phi_0$, mapping $\Delta_0$ to a geodesic triangle containing $N$. Then $N$ lies in the interstice of $\Delta_0$ and this circle packing is good for $\Delta_0$. The method of \refsec{spherical_case_eqns} applied to this circle packing recovers the the original parameters $m_{\Delta,v}$, so we have a bijection.

If the $m_\bullet$ satisfy unbranched circle packing equations, by \refsec{closed_disc} the packing of $\overline{K_0}$ is unbranched. After stereographic projection and inserting $\Delta_0$, it remains unbranched.
\end{proof}

Note that, after constructing a circle packing good for $\Delta_0$, we can also construct circle packings good for other triangles $\Delta'_0$ of $K$, in the same conformal class, simply by applying M\"{o}bius transformations to $S^2$. Thus, the circle packing equations for $(K, \Delta_0)$ and $(K, \Delta'_0)$ are distinct but lead to conformally equivalent spherical circle packings. Our method, based on stereographic projection, privileges the triangle covering the north pole, which maps to the ``exterior" triangle in the plane under stereographic projection. 

As noted in the disc case, if we allow parameters $m_\bullet$ and angles $\theta_\bullet$ in some triangles to be negative, we can regard those triangles as negatively oriented. Under stereographic projection, such triangles can be regarded as ``turned inside out" to cover the north pole.

\subsection{Tori}
\label{Sec:tori}

Let $K$ be a $\Delta$-complex triangulating a torus, with universal cover $\widetilde{K}$ a simplicial complex triangulating $\R^2$. As in \refsec{circle_packing_eqns} and \refsec{torus_eqns_satisfied}, let $K_0$ be a fundamental domain complex obtained by cutting along simple closed curves $\mathfrak{l}, \mathfrak{m}$ consisting of edges of $K$. Let $\lambda, \mu$ be oriented closed normal curves forming a basis for $H_1 (K)$. (Note $\lambda, \mu$ may be very different from $\mathfrak{l},\mathfrak{m}$!)

Let $v_0 \in V(K)$, and let $e_0$ an edge on $\partial K_0$. We then have several sets of circle packing equations, which we denote as follows:
\begin{itemize}
\item $\mathcal{E}_0$, the circle packing equations for $K_0$ (\refdef{circle_packing_eqns_disc});
\item $\mathcal{E}_{red}$, the reduced circle packing equations for $(K, \lambda, \mu, e_0, v_0)$ (\refdef{reduced_eqns_torus});
\item $\mathcal{E}$, the full set of circle packing equations for $(K, \lambda, \mu)$ (\refdef{circle_packing_eqns_torus}).
\end{itemize}
As the corners of $K$ are naturally in bijection with those of $K_0$, we can naturally identify the variables in all these sets of equations. From $\mathcal{E}_0$, we obtain $\mathcal{E}_{red}$ by removing the vertex equation at $v_0$ (if $v_0 \in V_{int}(K_0)$), and adding vertex equations along $\partial K_0 \setminus \{v_0\}$, edge equations along $\partial K_0 \setminus \{e_0\}$, and holonomy equations along $\lambda$ and $\mu$. From $\mathcal{E}_{red}$, $\mathcal{E}$ is obtained by adding the vertex equation at $v_0$ and the edge equation at $e_0$. Note $\mathcal{E}_0 \subset \mathcal{E}$ and $\mathcal{E}_{red} \subset \mathcal{E}$.

We introduce some notions and lemmas which will be useful in proving our results. As in \refsec{redundancy_rigidity}, let $V=|V(K)|$, $E=|E(K)|$ and $F=|F(K)|$.

\begin{lem}
\label{Lem:triangles_even}
$3F=2E$. In particular, $K$ contains an even number of triangles.
\end{lem}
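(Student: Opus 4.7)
\medskip

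The plan is to use two elementary counting arguments, one combinatorial and one topological, and combine them.

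First I would establish $3F = 2E$ by a double-counting of flag incidences between triangles and edges. Each triangle $\Delta$ of $K$ is the image of a standard $2$-simplex $\widehat{\Delta}$ with three distinct sides, and hence contributes $3$ to the total count of pairs $(\Delta, \widehat{e})$ where $\widehat{e}$ is a side of $\widehat{\Delta}$. On the other hand, since $K$ triangulates a closed surface (the torus has no boundary), every edge of $K$ is the image of exactly two such sides $\widehat{e}$: this is clearest after lifting to the universal cover $\widetilde K$, which is a simplicial triangulation of $\R^2$, so every edge of $\widetilde K$ bounds exactly two triangles, and the same holds in $K$ by projecting, even when the two sides come from a single triangle of $K$ with edges self-identified. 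Counting the flags the two ways yields $3F = 2E$.

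Next I would invoke Euler's formula for the torus, namely $V - E + F = \chi(T^2) = 0$. Substituting $E = 3F/2$ gives $V = E - F = F/2$, hence $F = 2V$. In particular, $F$ is even, proving the ``in particular'' clause. There is no real obstacle here; the only point that requires a moment's care is the claim that each edge is covered by exactly two triangle sides in a $\Delta$-complex (as opposed to a simplicial complex), and this is precisely where the hypothesis that $\widetilde K$ is a simplicial triangulation of $\R^2$ makes the bookkeeping transparent.
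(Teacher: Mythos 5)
Your proof is correct, and your double-counting of triangle--edge incidences is exactly the paper's (one-line) argument for $3F=2E$: each edge lies on the boundary of exactly two triangular faces. The only difference is that for the parity claim you detour through Euler's formula to obtain $F=2V$, whereas $3F=2E$ already forces $F$ to be even directly (since $3$ is odd and $2E$ is even); both routes are valid, but the Euler step is unnecessary.
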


\begin{proof}
Each edge lies on the boundary of two triangular faces.
\end{proof}

Given positive numbers $\{m_c\}_{c \in C(K)}$, we now define a function $\Theta \colon H_1 (K) \To \R/2\pi\Z$, which will keep track of the rotation of edges as we proceed around normal curves. Similar to \refsec{circle_packing_eqns_tori} and \refsec{torus_eqns_satisfied}, consider an oriented closed normal curve $\gamma$ on $K$, consisting of $n$ arcs $\gamma_j$ over $j \in \Z / n \Z$, concatenated in cyclic order, where $\gamma_j$ lies in triangle $\Delta_j$, cutting off corner $c_j$. Define $\delta_j = 1$ or $-1$ accordingly as $\gamma$ proceeds anticlockwise or clockwise around $c_j$. In each corner $c_j$ we have a parameter $m_{c_j}$. As usual we define angles $\theta_{c_j} \in (0,\pi)$ such that $m_\bullet = \cot(\theta_\bullet)/2$ and $\arg(m_\bullet + i) = \theta_\bullet/2$. We define
\begin{equation}
\label{Eqn:Theta_defn}
\Theta(\gamma) = 2 \arg \prod_{j=1}^{n} \left( m_{c_j} + i \right)^{\delta_j}
= 2 \sum_{j=1}^{n} \delta_j \arg \left( m_{c_j} + i \right) = \sum_{j=1}^{n} \delta_j \theta_{c_j}.
\end{equation}
(This quantity is in fact well defined modulo $4\pi$, but we only need it modulo $2\pi$.) 

\begin{lem}
\label{Lem:Theta_well_defined}
If the $\{m_c\}_{c \in C(K)}$ satisfy the vertex equations, then $\Theta(\gamma)$ only depends on the homology class of $\gamma$, yielding a well-defined map $\Theta \colon H_1 (K) \To \R/2\pi \Z$.
\end{lem}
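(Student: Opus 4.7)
The plan is to prove $\Theta$ is invariant under the basic homology-preserving move on closed normal curves, namely pushing a sub-arc across an interior vertex $v$. Combined with combinatorially trivial isotopies of $\gamma$ (which preserve $\Theta$ on the nose), these moves generate the equivalence relation of being homologous among closed normal curves on $K$.

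Concretely, suppose $\gamma$ traverses a maximal run of $k$ consecutive normal arcs, each cutting off a corner at $v$, in triangles $\Delta_{i_1}, \ldots, \Delta_{i_k}$ consecutive in the link of $v$. Because these arcs all proceed around $v$ in the same rotational sense, they share a common sign $\delta \in \{+1, -1\}$ and contribute $\delta \sum_{j=1}^{k} \theta_{c_{i_j}}$ to $\Theta(\gamma)$, where $\theta_c = 2\arg(m_c + i)$. The push-across-$v$ move replaces this run by the complementary run cutting off the remaining $d-k$ corners at $v$ (where $d = \deg v$) on the other side, with opposite sign $-\delta$. The two routings differ by a small loop encircling $v$, which is null-homotopic, so the homology class of $\gamma$ is preserved. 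The resulting change in $\Theta$ is
\[
-\delta \sum_{j=k+1}^{d} \theta_{c_{i_j}} - \delta \sum_{j=1}^{k} \theta_{c_{i_j}} = -\delta \sum_{c \text{ at } v} \theta_c.
\]
By \reflem{vertex_equation_angle_sum}, the vertex equation at $v$ is exactly the statement that the right-hand side lies in $2\pi\Z$, so the change in $\Theta$ is $0$ modulo $2\pi$. Hence $\Theta$ is constant on each homology class.

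To check that $\Theta$ defines a group homomorphism $H_1(K) \to \R/2\pi\Z$, given two closed normal curves $\gamma_1, \gamma_2$, after an isotopy we may assume they share a transverse crossing on a common edge and concatenate them at that point to obtain a closed normal curve $\gamma_1 \cdot \gamma_2$ with $[\gamma_1 \cdot \gamma_2] = [\gamma_1] + [\gamma_2]$. Additivity of the defining sum for $\Theta$ then gives $\Theta(\gamma_1 \cdot \gamma_2) = \Theta(\gamma_1) + \Theta(\gamma_2)$ in $\R/2\pi\Z$. The main technical obstacle is verifying rigorously that push-across-vertex moves plus trivial isotopies suffice to relate any two homologous closed normal curves; this is standard but fiddly, and can be established by putting a free homotopy between the two curves in general position with respect to the triangulation and decomposing it into local moves occurring near individual vertices (giving the moves above) and within single triangles (giving trivial isotopies).
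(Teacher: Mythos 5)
Your proposal is correct and follows essentially the same route as the paper: the paper's proof likewise homotopes $\gamma$ across an interior vertex $v$, observes that $\Theta$ changes by $\pm\sum_{c \text{ at } v}\theta_c$, which is a multiple of $2\pi$ by the vertex equation, and asserts (as you do) that such moves together with trivial isotopies relate any two homologous closed normal curves. The only difference is that you fold in the homomorphism property, which the paper records as a separate remark immediately after the lemma.
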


\begin{proof}
If we homotope $\gamma$ across a vertex $v$, surrounded by corners $c_j$ with angles $\theta_j$ then we add $\delta \sum \theta_j$ to the value of $\Theta$, where $\delta = \pm 1$ accordingly as $\gamma$ proceeds anticlockwise or clockwise around $v$ after the homotopy. The vertex equation at $v$ is equivalent to $\sum \theta_j$ being an integer multiple of $2\pi$. Thus after the homotopy, $\Theta$ is unchanged modulo $2\pi$. Any oriented closed normal curve homologous to $\gamma$ is related by such homotopies.
\end{proof}

Since a concatenation of closed oriented normal curves yields angles which sum under $\Theta$, we observe that $\Theta$ is a homomorphism of additive groups. 

\begin{lem}
\label{Lem:holonomy_Theta_0}
Suppose the $\{m_c\}_{c \in C(K)}$ satisfy the vertex equations. Then they satisfy he holonomy equations if and only if $\Theta$ is identically zero.
\end{lem}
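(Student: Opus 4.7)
The plan is to translate the two holonomy equations into the vanishing of $\Theta$ on the generators $\lambda, \mu$ of $H_1(K)$, and then use the homomorphism structure of $\Theta$ to extend this vanishing to all of $H_1(K)$.

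First, I would show that for any oriented closed normal curve $\gamma$ with arcs $\gamma_j$ ($j=1,\ldots,n$) cutting off corners $c_j$ with signs $\delta_j$, the holonomy equation associated to $\gamma$ is equivalent to $\Theta(\gamma) \equiv 0 \pmod{2\pi}$. Unpacking the alternating sum exactly as in the derivation preceding \refeqn{holonomy_eqns_1}, the holonomy equation is the vanishing of the imaginary part of
\[
P_\gamma := \prod_{j=1}^{n} \left( m_{c_j} + i \delta_j \right).
\]
For each $j$ with $\delta_j = -1$, the identity $m - i = (m^2+1)/(m+i)$ exhibits $m_{c_j} + i\delta_j$ as a positive real multiple of $(m_{c_j}+i)^{\delta_j}$; hence $P_\gamma$ is a positive real multiple of $\prod_j (m_{c_j}+i)^{\delta_j}$. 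Therefore $P_\gamma \in \R$ precisely when $\arg \prod_j (m_{c_j}+i)^{\delta_j} \in \pi \Z$, which by \refeqn{Theta_defn} is exactly the condition $\Theta(\gamma) \equiv 0 \pmod{2\pi}$. Applying this to $\gamma = \lambda$ and $\gamma = \mu$ shows that the pair of holonomy equations \refeqn{holonomy_eqns_1} and \refeqn{holonomy_eqns_2} holds simultaneously if and only if $\Theta(\lambda) = \Theta(\mu) = 0$ in $\R/2\pi\Z$.

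Second, because the vertex equations are assumed, \reflem{Theta_well_defined} together with the additivity of \refeqn{Theta_defn} under concatenation makes $\Theta$ a well-defined group homomorphism $H_1(K) \to \R/2\pi\Z$. Since $\lambda, \mu$ form a basis of $H_1(K) \cong \Z^2$, the homomorphism $\Theta$ vanishes identically if and only if it vanishes on $\lambda$ and $\mu$. Combining this with the first step yields the lemma.

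I do not expect a serious obstacle: the only potentially subtle point is the identification of the polynomial alternating-sum form of each holonomy equation with the single condition $P_\gamma \in \R$, but this is exactly the complex-number manipulation already performed in the text just before \refeqn{holonomy_eqns_1}, so no new computation is required beyond pointing to it.
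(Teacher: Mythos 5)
Your proposal is correct and follows essentially the same route as the paper's proof: both reduce each holonomy equation to the reality of the product $\prod_j (m_{c_j}+i)^{\delta_j}$ (using that $m+i\delta$ is a positive real multiple of $(m+i)^{\delta}$), identify this with $\Theta(\gamma)=0$ via \refeqn{Theta_defn}, and then invoke the homomorphism property of $\Theta$ together with $\lambda,\mu$ being a basis of $H_1(K)$. No gaps.
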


\begin{proof}
As discussed in \refsec{circle_packing_eqns_tori}, the holonomy equation for $\lambda$ is equivalent to $\prod_{j=1}^{n_\lambda} \left( m_{c_j^\lambda} + i \right)^{\delta_j^\lambda}$ being real, i.e. having argument an integer multiple of $\pi$. Since $\arg(m_\bullet + i) = \theta_\bullet/2$, this is equivalent to $\sum_{j=1}^n \delta_j^\lambda \theta_{c_j^\lambda}$ being an integer multiple of $2\pi$, i.e. $\Theta(\lambda) = 0$. A similar argument applies to $\mu$, so the holonomy equations are equivalent to $\Theta(\lambda) = \Theta(\mu) = 0$. As $\Theta$ is a homomorphism and $\lambda, \mu$ form a basis of $H_1 (K)$, this is equivalent to $\Theta \equiv 0$.
\end{proof}

\begin{proof}[Proof of \refprop{circle_packing_reduction_torus}]
Since $\mathcal{E}_{red} \subset \mathcal{E}$, it suffices to show that a positive solution $\{m_{c}\}_{c \in C(K)}$ of $\mathcal{E}_{red}$ also satisfies $\mathcal{E}$.

First we show the $m_\bullet$ satisfy the vertex equation at $v_0$. At each corner $c \in C(K)$, let $\theta_c$ be the unique angle in $(0, \pi)$ such that $m_c = \cot (\theta_c / 2)$. By \reflem{triangle_equation_angle_sum}, the triangle equation at $\Delta \in F(K)$ is equivalent to the angles in $\Delta$ summing to $\pi$. By \reflem{vertex_equation_angle_sum}, the vertex equation at $v \in V(K)$ is equivalent to the angles around $v$ summing to a multiple of $2\pi$.

As $\mathcal{E}_{red}$ includes all triangle equations, $\sum_{c \in C(K)} \theta_c = \pi F$, which by \reflem{triangles_even} is a multiple of $2\pi$. On the other hand, this sum is also the sum of the angles around all vertices. As $\mathcal{E}_{red}$ includes the vertex equation at every $v \in V(K) \setminus \{v_0\}$, the angles around every $v \in V(K) \setminus \{v_0\}$ sum to a multiple of $2\pi$. Thus, the angles around $v_0$ also sum to a multiple of $2\pi$, and the $m_\bullet$ satisfy the vertex equation at $v_0$. 

The $m_\bullet$ thus satisfy the circle packing equations $\mathcal{E}_0$ of $K_0$. Moreover, by \reflem{Theta_well_defined} then $\Theta$ is well defined, and as $\mathcal{E}_{red}$ includes the holonomy equations then \reflem{holonomy_Theta_0} implies $\Theta \equiv 0$.

As the $m_\bullet$ satisfy $\mathcal{E}_0$, \refsec{closed_disc} yields a circle packing of $K_0$ in the Euclidean plane, with realisation $\Phi_0 \colon K_0 \To \R^2$, angles $\{\theta_c\}_{c \in C(K_0)}$ as already introduced, and curvatures $\{\kappa_v\}_{v \in V(K_0)}$ are related to the $m_c$ by \refeqn{desired_relations}.

As in the proof of \reflem{torus_packings_give_solutions}, consider an oriented normal curve $\gamma$ in $K_0$ consisting of $n$ normal arcs $\gamma_j$, where each $\gamma_j$ proceeds from a point $p_{j-1}$ on an edge $e_{j-1}$ of a triangle $\Delta_j$, to a point $p_j$ on the edge $e_j$ of $\Delta_j$, cutting off corner $c_j$, with $\delta_j = \pm 1$ accordingly as $\gamma_j$ cuts off $c_j$ anticlockwise or clockwise. Orient each $e_j$ so that the direction of $\gamma$ and the direction of $e_j$ form an oriented basis. Then under the realisation $\Phi_0$, the direction of $e_j$ is obtained from the direction of $e_{j-1}$ by a rotation of $\delta_j \theta_{c_j}$. Thus the direction of the final edge $e_n$ is obtained from the direction of the initial $e_0$ by a rotation of $\sum_{j=1}^n \delta_j \theta_{c_j}$.

In particular, if $\gamma_0$ is an oriented normal curve in $K_0$ obtained from a closed oriented normal curve $\gamma$ in $K$ after cutting along $\mathfrak{l}$ and $\mathfrak{m}$, then under $\Phi_0$, $e_n$ is obtained from $e_0$ by a rotation of $\sum_{j=1}^n \delta_j \theta_{c_j} = \Theta(\gamma) = 0$. Thus $\Phi_0(e_0)$ and $\Phi_0(e_n)$ point in the same direction.

\begin{figure}
\begin{center}
\begin{tikzpicture}[scale=1.5]
 \draw (0,0) -- (1,1) node[midway,left] {$\mathfrak{m}_1^L$} node[right] {\textcolor{blue}{$p_1^L$}} -- (0.4,1.6) node[midway,left] {$\mathfrak{m}_2^L$} node[right, xshift=0.3em] {\textcolor{blue}{$p_2^L$}} -- (1.1,2.3) node[right] {\textcolor{blue}{$\vdots$}} -- (0.6,3) -- (1,3.5);
 \draw (3.5,0.1) -- (4.5,1.1) node[midway,right] {$\mathfrak{m}_1^R$} node[left,xshift=-0.3em] {\textcolor{blue}{$p_1^R$}} -- (3.9,1.7) node[midway,right] {$\mathfrak{m}_2^R$} node[left] {\textcolor{blue}{$p_2^R$}} --(4.6,2.4) node[left,xshift=-0.4em,yshift=0.3em] {$\textcolor{blue}{\vdots}$} --(4.1,3.1)--(4.5,3.6);
 \draw (1,3.5) -- (1.6,3.1) node[midway,above] {$\mathfrak{l}_1^T$} node[below] {\textcolor{blue}{$p_1^T$}} -- (2.1,3.5) node[midway,above] {$\mathfrak{l}_2^T$} node[below] {\textcolor{blue}{$p_2^T$}} -- (2.9,3.1) node[below] {$\textcolor{blue}{\ldots}$} --(4.5,3.6);
 \draw (0,0) -- (0.6,-0.4) node[midway,below] {$\mathfrak{l}_1^B$} node[above] {\textcolor{blue}{$p_1^B$}} -- (1.1,0) node[midway,below] {$\mathfrak{l}_2^B$} node[above] {\textcolor{blue}{$p_2^B$}} -- (1.9,-0.4) node[above] {$\textcolor{blue}{\cdots}$} -- (3.5,0.1);
 \filldraw[blue] (0,0) node[below left] {$p^{BL}$} circle (1pt);
 \filldraw[blue] (1,1) circle (1pt);
 \filldraw[blue] (0.4,1.6) circle (1pt);
 \filldraw[blue] (1.1,2.3) circle (1pt);
 \filldraw[blue] (0.6,3) circle (1pt);
 \filldraw[blue] (1,3.5) node [above left] {$p^{TL}$} circle (1pt);
 \filldraw[blue] (1.6,3.1) circle (1pt);
 \filldraw[blue] (2.1,3.5) circle (1pt);
 \filldraw[blue] (2.9,3.1) circle (1pt);
 \filldraw[blue] (3.5,0.1) node [below right] {$p^{BR}$} circle (1pt);
 \filldraw[blue] (4.5,1.1) circle (1pt);
 \filldraw[blue] (3.9,1.7) circle (1pt);
 \filldraw[blue] (4.6,2.4) circle (1pt);
 \filldraw[blue] (4.1,3.1) circle (1pt);
 \filldraw[blue] (4.5,3.6) node [above right] {$p^{TR}$} circle (1pt);
 \filldraw[blue] (0.6,-0.4) circle (1pt);
 \filldraw[blue] (1.1,0) circle (1pt);
 \filldraw[blue] (1.9,-0.4) circle (1pt);
 \draw[dashed,->] (-0.5,0) -- (0.5,3.5) node[midway, left] {$\mathfrak{m}^L$};
 \draw[dashed,->] (4.5,0.1) -- (5.5,3.6) node[midway, right] {$\mathfrak{m}^R$};
 \draw[dashed,->] (1,3.9) -- (4.5,4) node[midway,above] {$\mathfrak{l}^T$};
 \draw[dashed,->] (0,-0.7) -- (3.5,-0.6) node[midway,below ] {$\mathfrak{l}^B$};
 \node at (2.2,1.8) {$K_0$};
 \node at (0.5,0.5) {};
\end{tikzpicture}
\end{center}
 \caption{ Fundamental domain complex $K_0$, with boundary edge and vertices labelled.}
 \label{Fig:fundamental_domain_boundary_labels}
\end{figure}

We now label vertices and edges in $\partial K_0$ as illustrated in \reffig{fundamental_domain_boundary_labels}. The boundary of $K_0$ consists of two copies of $\mathfrak{l}$ and $\mathfrak{m}$, which we may draw with both copies of $\mathfrak{l}$ proceeding from left to right, and both copies of $\mathfrak{m}$ proceeding from bottom to top. Denote the bottom and top copies of $\mathfrak{l}$ as $\mathfrak{l}^B$ and $\mathfrak{l}^T$, and the left and right copies of $\mathfrak{m}$ as $\mathfrak{m}^L$ and $\mathfrak{m}^R$ respectively. Let $\mathfrak{l}, \mathfrak{m}$ consist of $n_{\mathfrak{l}}, n_{\mathfrak{m}}$ edges respectively. Denote the $j$th edge of $\mathfrak{l}$ and $\mathfrak{m}$ by $\mathfrak{l}_j$ and $\mathfrak{m}_j$ respectively, and the corresponding edges of $\mathfrak{l}^B, \mathfrak{l}^T, \mathfrak{m}^L, \mathfrak{m}^R$ by $\mathfrak{l}_j^B, \mathfrak{l}_j^T, \mathfrak{m}_j^L, \mathfrak{m}_j^R$ respectively, numbered in order. For $0 \leq j \leq n_\mathfrak{l}$, denote the vertices of $K_0$ along $\mathfrak{l}^B$ and $\mathfrak{l}^T$ in order by $p_j^B$ and $p_j^T$. Similarly, for $0 \leq j \leq n_\mathfrak{m}$, denote the vertices of $K_0$ along $\mathfrak{m}^L$ and $\mathfrak{m}^R$ in order by $p_j^L$ and $p_j^R$. Denote the bottom-left, bottom-right, top-left, top-right vertices by $p^{BL}, p^{BR}, p^{TL}, p^{TR}$, so $p_0^B = p_0^L = p^{BL}$, $p_{n_\mathfrak{l}}^B = p_0^R = p^{BR}$, $p_0^T = p_{n_\mathfrak{m}}^L = p^{TL}$, and $p_{n_\mathfrak{l}}^T = p_{n_{\mathfrak{m}}}^R = p^{TR}$. Thus each $\mathfrak{l}_j^B$ has endpoints $p_{j-1}^B, p_j^B$, each $\mathfrak{l}_j^T$ has endpoints $p_{j-1}^T, p_j^T$, each $\mathfrak{m}_j^L$ has endpoints $p_{j-1}^L, p_j^L$, and each $\mathfrak{m}_j^R$ has endpoints $p_{j-1}^R, p_j^R$.

For each integer $j$ from $1$ to $n_\mathfrak{l}$, there is a closed oriented normal curve $\gamma$ in $K$, homologous to $\mathfrak{m}$, which becomes an oriented normal curve $\gamma_0$ in $K_0$ from $\mathfrak{l}_j^B$ to $\mathfrak{l}_j^T$. As $\Theta(\gamma) = 0$ then the initial and final edges $\Phi_0 (\mathfrak{l}_j^B)$ and $\Phi_0 (\mathfrak{l}_j^T)$ point in the same direction. Similarly, for each $1 \leq j \leq n_\mathfrak{m}$, there is a closed oriented normal curve in $K$, homologous to $\mathfrak{l}$, yielding a normal curve in $K_0$ from $\mathfrak{m}_j^L$ to $\mathfrak{m}_j^R$, so that $\Phi_0(\mathfrak{m}_j^L)$ and $\Phi_0 (\mathfrak{l}_j^T)$ point in the same direction.

Thus the polygonal paths $\Phi_0(\mathfrak{m}^L)$ and $\Phi_0(\mathfrak{m}^R)$ consist of corresponding parallel segments, as do the polygonal paths $\Phi_0(\mathfrak{l}^B)$ and $\Phi_0(\mathfrak{l}^T)$.

Now consider the edge equations around $\partial K_0$. Using \reflem{edge_eqn_fact} we express these in terms of curvatures of circles. At each vertex $v$, $\Phi_0$ realises a circle with curvature $\kappa_v$. For convenience, we write $\kappa_j^B, \kappa_j^T, \kappa_j^L, \kappa_j^R$ for $\kappa_{p_j^{B}}, \kappa_{p_j^T}, \kappa_{p_j^L}, \kappa_{p_j^R}$ respectively, and $\kappa^{BL}, \kappa^{BR}, \kappa^{TL}, \kappa^{TR}$ to mean $\kappa_{p^{BL}}, \kappa_{p^{BR}}, \kappa_{p^{TL}}, \kappa_{p^{TR}}$ respectively.

Using \reflem{edge_eqn_fact}, the edge equations of $\mathfrak{l}_j$ and $\mathfrak{m}_j$ respectively assert that
\[
\frac{\kappa_{j-1}^B}{\kappa_{j-1}^T} = \frac{\kappa_j^B}{\kappa_j^T}
\quad \text{and} \quad
\frac{\kappa_{j-1}^L}{\kappa_{j-1}^R} = \frac{\kappa_j^L}{\kappa_j^R}.
\] 
The edge equations of $\mathfrak{l}_1, \ldots, \mathfrak{l}_{n_\mathfrak{l}}$ respectively assert the chain of equalities
\begin{equation}
\label{Eqn:chain_of_l_edge_eqns}
\frac{\kappa_0^B}{\kappa_0^T} = \frac{\kappa_1^B}{\kappa_1^T} = \cdots = 
\frac{\kappa_{j-1}^B}{\kappa_{j-1}^T} = \frac{\kappa_j^B}{\kappa_j^T}
= \cdots = \frac{\kappa_{n_\mathfrak{l}}^B}{\kappa_{n_\mathfrak{l}}^T},
\end{equation}
and the edge equations of $\mathfrak{m}_1, \ldots, \mathfrak{m}_{n_\mathfrak{m}}$ assert the chain of equalities
\begin{equation}
\label{Eqn:chain_of_m_edge_eqns}
\frac{\kappa_0^L}{\kappa_0^R} = \frac{\kappa_1^L}{\kappa_1^R} = \cdots = 
\frac{\kappa_{j-1}^L}{\kappa_{j-1}^R} = \frac{\kappa_j^L}{\kappa_j^R}
= \cdots = \frac{\kappa_{n_\mathfrak{m}}^L}{\kappa_{n_\mathfrak{m}}^R}.
\end{equation}
Note that the equality of the first and last entries in \refeqn{chain_of_l_edge_eqns} is $\kappa^{BL}/\kappa^{TL} = \kappa^{BR}/\kappa^{TR}$, and the equality of the first and last expressions in \refeqn{chain_of_m_edge_eqns} is the equivalent $\kappa^{BL}/\kappa^{BR} = \kappa^{TL}/\kappa^{TR}$.

Suppose the missing edge equation in $\mathcal{E}_{red}$ is for $\mathfrak{m}_j$, i.e. $e_0 = \mathfrak{m}_j$. Then all equalities of \refeqn{chain_of_l_edge_eqns} hold, and all equalities except the $j$th of \refeqn{chain_of_m_edge_eqns} hold. However, since the first and last expressions in \refeqn{chain_of_l_edge_eqns} are equal, the first and last expressions in \refeqn{chain_of_m_edge_eqns} are also equal, and hence in fact all the expressions in \refeqn{chain_of_m_edge_eqns} are equal. Hence the $j$th equality of \refeqn{chain_of_m_edge_eqns} also holds. 

Similarly, if $e_0 = \mathfrak{l}_j$, then all equalities of \refeqn{chain_of_m_edge_eqns} hold, and all equalities except the $j$th of \refeqn{chain_of_l_edge_eqns} hold. Since the first and last expressions of \refeqn{chain_of_m_edge_eqns} are equal, the first and last expressions of \refeqn{chain_of_l_edge_eqns} are also equal, so in fact all expressions in \refeqn{chain_of_l_edge_eqns} are equal, and the $j$the equality of \refeqn{chain_of_l_edge_eqns} holds.

In other words, in any case, the edge equation for $e_0$ holds, and the $m_\bullet$ satisfy $\mathcal{E}$.
\end{proof}

We can now prove the main theorem. In \refsec{torus_eqns_satisfied}, given a Euclidean circle packing of $K$, we obtained positive parameters $\{m_c\}_{c \in C(K)}$, depending only on the conformal class of the circle packing, which satisfy the circle packing equations $\mathcal{E}$ for $(K, \lambda, \mu)$, and are related to the curvatures $\{\kappa_v\}_{v \in V(K)}$ and angles $\{\theta_c\}_{c \in C(K)}$ by \refeqn{desired_relations}. It remains to show that this process can be reversed, constructing circle packings from positive real solutions of $\mathcal{E}$, to provide a bijection between these solutions and conformal classes of Euclidean circle packings of $K$.

\begin{proof}[ Proof of \refthm{circle_packing_eqs_general} and \refprop{m_meaning} when $K \cong T^2$]
Since $\mathcal{E}_{red} \subset \mathcal{E}$, we apply the arguments of the previous proof: we form a fundamental domain complex $K_0$ by cutting $K$ along oriented $\mathfrak{l}, \mathfrak{m}$; as the $\{m_c\}_{c \in C(K)}$ satisfy $\mathcal{E}_0 \subset \mathcal{E}$, we obtain a circle packing of $K_0$ with realisation $\Phi_0 \colon K_0 \To \R^2$ and angles $\{\theta_c\}_{c \in C(K_0)}$ and curvatures $\{\kappa_v\}_{v \in V(K_0)}$ satisfying \refeqn{desired_relations}; then $\partial K_0$ is as in \reffig{fundamental_domain_boundary_labels}, with the polygonal paths $\Phi_0(\mathfrak{m}^L), \Phi_0(\mathfrak{m}^R)$ consisting of corresponding parallel segments, and similarly for $\Phi_0(\mathfrak{l}^B)$ and $\Phi_0(\mathfrak{l}^T)$; using edge equations around $\partial K_0$, the equalities \refeqn{chain_of_l_edge_eqns} and \refeqn{chain_of_m_edge_eqns} hold.

By \refeqn{chain_of_l_edge_eqns} there is a positive real constant $A_\mathfrak{m}$ such that for each $1 \leq j \leq n_{\mathfrak{l}}$ we have $\kappa_j^B = A_\mathfrak{m} \kappa_j^T$. Similarly by \refeqn{chain_of_m_edge_eqns} there is a positive constant $A_\mathfrak{l}$ such that for each $1 \leq j \leq n_{\mathfrak{m}}$, we have $\kappa_j^L = A_\mathfrak{l} \kappa_j^R$. Writing $r_\bullet^\bullet = 1/\kappa_\bullet^\bullet$ for the corresponding radius of each circle, we then have
\[
r_j^T = A_\mathfrak{m} r_j^B \quad \text{for all $1 \leq j \leq n_\mathfrak{l}$}, \quad \text{and} \quad
r_j^R = A_\mathfrak{l} r_j^L \quad \text{for all $1 \leq j \leq n_\mathfrak{m}$}.
\]
The length of each edge of $\partial K_0$ under $\Phi_0$ is the sum of two such radii. Thus each oriented edge $\Phi_0 (\mathfrak{l}_j^T)$ is parallel to and $A_\mathfrak{m}$ times the length of $\Phi_0 (\mathfrak{l}_j^B)$, and each $\Phi_0 (\mathfrak{m}_j^R)$ is parallel to and $A_\mathfrak{l}$ times the length of $\Phi_0 (\mathfrak{m}_j^L)$. In other words, $\Phi_0 (\mathfrak{l}^T)$ is obtained from $\Phi_0 (\mathfrak{l}^B)$ by a dilation of $A_\mathfrak{m}$ and a translation; and $\Phi_0(\mathfrak{m}^R)$ is obtained from $\Phi_0(\mathfrak{m}^L)$ by a dilation of $A_\mathfrak{l}$ and a translation.

Thus the quadrilateral formed by the four corners $p^{BL}, p^{BR}, p^{TL}, p^{TR}$ has its opposite sides parallel, its top side $A_\mathfrak{m}$ times the length of its bottom side, and its right side $A_\mathfrak{l}$ the length of its left side. But as opposite sides are parallel, we have a parallelogram, hence $A_\mathfrak{l} = A_\mathfrak{m} = 1$. 

Hence the polygonal sides of $\Phi_0 (K_0)$ are related by translations, and $\Phi_0$ extends to a realisation $\widetilde{\Phi} \colon \widetilde{K} \To \R^2$ of a Euclidean circle packing of the universal cover $\widetilde{K}$ of $K$ in $\R^2$, such that deck transformations of $K$ correspond to translations under $\widetilde{\Phi}$. Hence $\widetilde{\Phi}$ is the development of the realisation $\Phi \colon K \To \R^2$ of a circle packing of $K$ on a Euclidean torus $T^2$, having the same angles $\theta_\bullet$ and curvatures $\kappa_\bullet$, related to the $m_\bullet$ by \refeqn{desired_relations}. As the circle packing of $K_0$ is unique up to conformal equivalence, the circle packing of $K$ so obtained is also unique up to conformal equivalence.
\end{proof}

\begin{proof}[Proof of \refthm{circle_packing_eqs_general_unbranched} when $K \cong T^2$]
By \refsec{closed_disc}, if the $m_\bullet$ satisfy the unbranched circle packing equations, then the circle packing of $K_0$ obtained in the above argument is unbranched, and its realisation $\Phi_0$ is a homeomorphism of $K_0$ onto a ``polygonal parallelogram". The realisation $\widetilde{\Phi} \colon \widetilde{K} \To \R^2$ is then a homeomorphism, so the circle packing of $K$ obtained has no branching.
\end{proof}

{\flushleft \textbf{Conflict of Interests statement.} }
On behalf of all authors, the corresponding author states that there is no conflict of interest. 

{\flushleft \textbf{Data availability statement.} }
This work has no associated data.

\bibliographystyle{amsplain}
\bibliography{refs.bib}

\end{document}